\documentclass[11pt,reqno]{amsart}

\usepackage{enumerate,url,amssymb,  mathrsfs}
\usepackage{graphicx}
\usepackage{esint,comment}
\pdfoutput=1

\newtheorem{theorem}{Theorem}[section]
\newtheorem{lemma}[theorem]{Lemma}
\newtheorem*{lemma*}{Lemma}
\newtheorem{proposition}[theorem]{Proposition}
\newtheorem{corollary}[theorem]{Corollary}

\theoremstyle{definition}

\theoremstyle{remark}
\newtheorem{remark}[theorem]{Remark}

\numberwithin{equation}{section}

\DeclareMathOperator{\LIL}{LIL}


\newcommand{\C}{\mathbb{C}}

\newcommand{\DD}{\mathbb{D}}

\newcommand{\N}{\mathbb{N}}

\DeclareMathOperator{\re}{Re}
\DeclareMathOperator{\var}{Var}
\DeclareMathOperator{\Aut}{Aut}
\DeclareMathOperator{\im}{Im}
\DeclareMathOperator{\hyp}{hyp}

\DeclareMathOperator{\Lip}{Lip}

\DeclareMathOperator{\Hdim}{H.dim }
\DeclareMathOperator{\Mdim}{M.dim }

\DeclareMathOperator{\supp}{supp }
\DeclareMathOperator{\Area}{Area }
\DeclareMathOperator{\ei}{I}
\DeclareMathOperator{\pp}{PP}
\DeclareMathOperator{\WP}{WP}
\DeclareMathOperator{\bl}{B}
\DeclareMathOperator{\fuchs}{F}
\DeclareMathOperator{\shell}{shell}

\DeclareMathOperator{\Vpp}{\mathcal V_{\pp}}

\def\XXint#1#2#3{{\setbox0=\hbox{$#1{#2#3}{\int}$}
\vcenter{\hbox{$#2#3$}}\kern-.5\wd0}}

\def\le{\leqslant}
\def\ge{\geqslant}
\setcounter{tocdepth}{1}

\begin{document}
\baselineskip6mm
\vskip0.4cm
\title[Asymptotic variance of the Beurling transform]{Asymptotic variance\\ of the Beurling transform}

\author[K. Astala]{Kari Astala}
\address{Le Studium$^\circledR$, Loire Valley Institute for Advanced Studies, Orl\'eans \& Tours, France; Mapmo, rue de Chartres, 45100 Orl\'eans, France; 
Department of Mathematics and Statistics, University of Helsinki, 
         P.O. Box 68, FIN-00014, Helsinki, Finland}
\email{kari.astala@helsinki.fi}

\author[O. Ivrii]{Oleg Ivrii}
\address{Department of Mathematics and Statistics, University of Helsinki, 
         P.O. Box 68, FIN-00014, Helsinki, Finland}
\email{oleg.ivrii@helsinki.fi}

\author[A. Per\"al\"a]{Antti Per\"al\"a}
\address{Department of Mathematics and Statistics, University of Helsinki, 
         P.O. Box 68, FIN-00014, Helsinki, Finland}
\email{antti.i.perala@helsinki.fi}

\author[I. Prause]{Istv\'an Prause}
\address{Department of Mathematics and Statistics, University of Helsinki,
         P.O. Box 68, FIN-00014, Helsinki, Finland}
\email{istvan.prause@helsinki.fi}

\thanks{
A.P. was supported by the Vilho, Yrj\"o and Kalle V\"ais\"al\"a Foundation and the Emil Aaltonen Foundation. I.P. and A.P. were supported by the Academy of Finland (SA) grants 1266182 and 1273458. All authors were supported by the Center of Excellence Analysis and Dynamics, SA grants 75166001 and 12719831. Part of this research was performed while K.A. and I.P. were visiting the Institute for Pure and Applied Mathematics (IPAM), which is supported by the National Science Foundation.}

\subjclass[2010]{Primary 30C62; Secondary 30H30} 


\keywords{Quasiconformal map, Beurling transform, Asymptotic variance, Bergman projection, Bloch space, Hausdorff dimension, Julia set}

\begin{abstract} 
We study the interplay between infinitesimal deformations of conformal mappings, quasiconformal distortion estimates and integral means spectra.
By the work of McMullen, the second derivative of  the Hausdorff dimension of the boundary of the image domain
is naturally related to asymptotic variance of the Beurling transform. In view of a theorem of Smirnov which states that the dimension of a $k$-quasicircle is at most $1+k^2$, it is natural to expect that the maximum asymptotic variance $\Sigma^2 = 1$. 
In this paper, we prove $0.87913 \le \Sigma^2 \le 1$.

For the lower bound, we give examples of polynomial Julia sets which are $k$-quasicircles with dimensions $1+ 0.87913 \, k^2$ for $k$ small, thereby showing that
$\Sigma^2 \ge 0.87913$.
The key ingredient in this construction is a good estimate for the distortion $k$, which is better than the one given by a straightforward use of the $\lambda$-lemma in the appropriate parameter space.
Finally, we develop a new fractal approximation scheme for evaluating $\Sigma^2$ in terms of nearly circular polynomial Julia sets.
\end{abstract}

\maketitle


\section{Introduction}\label{se:introduction}

In his work on the Weil-Petersson metric \cite{mcmullen}, McMullen considered certain holomorphic families of conformal maps 
$$
\varphi_t \colon \mathbb{D^*} \to \mathbb{C}, 
\qquad\varphi_0(z) = z, \qquad \text{where }\mathbb{D^*}=\{z : |z|>1\},
$$
 that  naturally arise in complex dynamics and Teichm\"uller theory. 
For these special families, he used thermodynamic formalism to relate a number of different dynamical features. For instance, he showed that the infinitesimal growth of the Hausdorff dimension of the Jordan curves $\varphi_t(\mathbb{S}^1)$ is connected to the asymptotic variance of the first derivative of the vector field $v=\frac{d\varphi_t}{dt}\bigl |_{t=0}$  by  the formula
\begin{equation} \label{eq:mcmullen}
2\, \frac{d^2}{dt^2}\biggl |_{t=0} \Hdim \, \varphi_t(\mathbb{S}^1) = \sigma^2(v'), \quad
\end{equation}
where the \emph{asymptotic variance} of a Bloch function $g$ in $\mathbb{D^*} $ is given by 
\begin{equation}
\label{eq:asymvariance} 
\sigma^2(g)=\frac{1}{2\pi}\limsup_{R\to1^+}\,  \frac{1}{|\log(R-1)|} \int_{|z|=R} |g(z)|^2 |dz|. 
\end{equation}
This terminology is justified by viewing $g$ as a stochastic process 
$$Y_s(\zeta)=g((1-e^{-s}) \zeta), \quad \zeta \in \mathbb{S}^1, \quad 0\leqslant s <\infty,
$$
with respect to the probability measure $|d\zeta|/2\pi$, in which case $\sigma^2(g)=\limsup_{s\to \infty} \frac{1}{s} \, \sigma^2_{Y_s}$.
For further relevance of probability methods to the study of the  boundary distortion of conformal maps,  we refer the reader to \cite{kayumov, makarov90}.

For arbitrary families of conformal maps, the identity \eqref{eq:mcmullen} may not hold. For instance, Le and Zinsmeister \cite{le-zinsmeister} constructed a family $\{\varphi_t\}$ for which $\sigma^2(v')$ is zero, while $t \mapsto \Mdim \varphi_t (\mathbb{S}^1)$ (with Hausdorff dimension replaced by Minkowski dimension) is equal to 1 for $t < 0$ but grows quadratically for $t > 0$. 

Nevertheless, it is natural to enquire if McMullen's formula \eqref{eq:mcmullen}  holds on the level of universal bounds. As will be explained in detail in the subsequent sections, for general  holomorphic families of conformal maps $\varphi_t $ parametrised  by a complex parameter $t \in \DD$, one can combine the work of  Smirnov  \cite{smirnov}  with the theory of holomorphic motions \cite{MSS, Slod} to show that  
\begin{equation}
\label{eq:smirnov-intro} 
\Hdim\, \varphi_t(\mathbb{S}^1) \leqslant 1 + \frac{(1-\sqrt{1-|t|^2})^2}{|t|^2}  
= 1+ \frac{|t|^2}{4}+ \mathcal{O}(|t|^4) , \quad t \in \DD.
\end{equation}
It is conjectured that the  equality in \eqref{eq:smirnov-intro} holds  for some family, but this is still  open.
On the other hand, the derivative of the infinitesimal vector field $v=\frac{d\varphi_t}{dt}\bigl |_{t=0}$ can be represented in the form 
$$ v' = \mathcal{S} \mu
$$
where $|\mu(z)| \leqslant \chi_{\DD}$ and $\mathcal{S}$ is the {\it Beurling transform}\,, the 
principal value integral
\begin{equation}
\label{eq:beurling}
\mathcal{S}\mu(z)=-\frac{1}{\pi} \int_{\mathbb{C}}\frac{\mu(w)}{(z-w)^2}dm(w).
\end{equation}

\smallskip

\noindent (Since the support of $\mu$ is contained in the unit disk,  $v'$ is a holomorphic function on the exterior unit disk.)

In this formalism, McMullen's identity describes the asymptotic variance
$ \sigma^2(\mathcal{S}\mu)  
$
for a ``dynamical'' Beltrami coefficient $\mu$, which is invariant under a co-compact Fuchsian group or a Blaschke product.

In this paper, we study the quantity
\begin{equation} \label{eq:Sigma} 
\Sigma^2 := \;  \sup \{\sigma^2(\mathcal{S}\mu) : |\mu| \leqslant \chi_{\mathbb{D}} \}
\end{equation}
from several different perspectives.
In addition to the problem of dimension distortion of quasicircles,  $\Sigma^2$
is naturally related to questions on integral means of conformal maps, which we discuss later in the introduction.
The first result in this work is an upper bound for $\Sigma^2$:

\begin{theorem} 
\label{thm:upperbound-intro}
Suppose $\mu$ is measurable in $\C$ with $|\mu| \leqslant \chi_{\mathbb{D}}$. Then,
\begin{equation} \label{goal3} 
 \sigma^2(\mathcal{S}\mu) := \frac{1}{2\pi}\limsup_{R\to1^+}\,  \frac{1}{|\log(R-1)|} \int_{0}^{2 \pi} |\mathcal{S} \mu (Re^{i \theta})|^2 \; d\theta \;  \leqslant \;   1.
\end{equation}
\end{theorem}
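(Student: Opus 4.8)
The plan is to pass from the circle integral to the Taylor coefficients of $\mathcal{S}\mu$, reformulate the problem by duality as an $L^1$-estimate on the disk, and close it with a single weighted Cauchy--Schwarz inequality. Concretely, for $|z|>1>|w|$ one expands $(z-w)^{-2}=\sum_{n\ge1}n\,w^{n-1}z^{-n-1}$ and integrates, so that $\mathcal{S}\mu(z)=\sum_{n\ge1}a_n z^{-n-1}$ with $a_n=-\tfrac{n}{\pi}\int_{\DD}\mu(w)\,w^{n-1}\,dm(w)$. By Parseval on the circle $|z|=R$,
\[
\frac{1}{2\pi}\int_{|z|=R}|\mathcal{S}\mu(z)|^2\,|dz|=\sum_{n\ge1}|a_n|^2R^{-2n-1}
=\sum_{n\ge1}\frac{n^2}{\pi^2R^{2n+1}}\Bigl|\int_{\DD}\mu\,w^{n-1}\,dm\Bigr|^2 .
\]
Writing this weighted $\ell^2$-norm in dual form, it equals $\bigl(\sup_{\|v\|_{\ell^2}\le1}\,\bigl|\int_{\DD}\mu(w)P_v(w)\,dm(w)\bigr|\bigr)^2$, where $P_v(w)=\sum_{n\ge1}\frac{n v_n}{\pi R^{n+1/2}}w^{n-1}=\frac{1}{\pi R^{3/2}}g_v'(w/R)$ and $g_v(\zeta)=\sum_{n\ge1}v_n\zeta^n$ lies in the unit ball of $H^2(\DD)$.

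The hypothesis $|\mu|\le\chi_{\DD}$ enters only through $\bigl|\int_{\DD}\mu P_v\,dm\bigr|\le\|P_v\|_{L^1(\DD)}$, and a change of variables $\zeta=w/R$ gives $\|P_v\|_{L^1(\DD)}=\frac{R^{1/2}}{\pi}\int_{|\zeta|<1/R}|g_v'(\zeta)|\,dm(\zeta)$. The crucial step is the weighted Cauchy--Schwarz estimate
\[
\int_{|\zeta|<1/R}|g_v'|\,dm\le\Bigl(\int_{\DD}|g_v'(\zeta)|^2(1-|\zeta|^2)\,dm(\zeta)\Bigr)^{1/2}\Bigl(\int_{|\zeta|<1/R}\frac{dm(\zeta)}{1-|\zeta|^2}\Bigr)^{1/2},
\]
in which the first factor is $\le\bigl(\pi\|g_v\|_{H^2}^2\bigr)^{1/2}\le\sqrt{\pi}$ by the Littlewood--Paley identity $\int_{\DD}|g_v'|^2(1-|\zeta|^2)\,dm=\pi\sum_{n\ge1}\frac{n}{n+1}|v_n|^2$, while the second factor equals $\bigl(\pi\log\frac{1}{1-R^{-2}}\bigr)^{1/2}$. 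Hence $\|P_v\|_{L^1(\DD)}\le R^{1/2}\bigl(\log\frac{1}{1-R^{-2}}\bigr)^{1/2}$ uniformly in $v$, so that
\[
\frac{1}{2\pi}\int_{|z|=R}|\mathcal{S}\mu(z)|^2\,|dz|\le R\,\log\frac{1}{1-R^{-2}}=R\bigl(|\log(R-1)|+O(1)\bigr),
\]
and dividing by $|\log(R-1)|$ and letting $R\to1^+$ yields $\sigma^2(\mathcal{S}\mu)\le1$. (Note this bound on the circle integral is uniform in $\mu$, so it gives $\Sigma^2\le1$ directly, and it uses no quasiconformal input at all; Smirnov's theorem only motivates the expectation that $1$ is sharp.)

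The only genuine choice in the argument is the weight $1-|\zeta|^2$, and I expect that to be the crux of the matter: it is forced by the double requirement that $\int_{\DD}|g'|^2(1-|\zeta|^2)\,dm$ be controlled by $\|g\|_{H^2}^2$ with \emph{no} logarithmic loss (Littlewood--Paley), while its companion integral $\int_{|\zeta|<r}(1-|\zeta|^2)^{-1}\,dm\sim\pi|\log(1-r)|$ remains of exactly logarithmic size. A heavier weight destroys the first estimate, a lighter weight destroys the second, and a naive unweighted Cauchy--Schwarz — or Parseval applied directly on the annulus $1<|z|<R$ — overshoots, giving a bound with the wrong constant (or even the wrong order of growth) rather than the sharp value $1$. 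Beyond this, one should check the routine points: that the Laurent expansion and its termwise integration are legitimate because $\supp\mu\Subset\{|z|=R\}$, that the duality identity for the weighted $\ell^2$-norm is attained, and that the $O(1)$ errors are genuinely bounded as $R\to1^+$.
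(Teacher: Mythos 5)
Your argument is correct — I checked the coefficient formula $a_n=-\tfrac{n}{\pi}\int_{\DD}\mu w^{n-1}dm$, the Parseval identity, the duality step, the Littlewood--Paley computation $\int_{\DD}|g_v'|^2(1-|\zeta|^2)\,dm=\pi\sum_{n\ge1}\tfrac{n}{n+1}|v_n|^2\le\pi$, and the weight integral $\int_{|\zeta|<1/R}(1-|\zeta|^2)^{-1}dm=\pi\log\tfrac{1}{1-R^{-2}}$, and all interchanges are justified since the relevant series converge uniformly on $\overline{\DD}$ for fixed $R>1$ — but it is a genuinely different route from the paper's. The paper proves the circle-integral bound (its Theorem \ref{upperthm}) by quasiconformal means: it embeds $\mu$ into a holomorphic motion of normalised solutions of the Beltrami equation (Proposition \ref{rep}), exploits antisymmetry with respect to the unit circle together with a Smirnov-type square-root trick and the interpolation lemma (Proposition \ref{apupropo}, Corollary \ref{le:interpolation}, Theorem \ref{intmeans11} with constant $C(\delta)^{k^2}$), and then extracts the linearised estimate by letting $k\to0$ (Lemma \ref{intmeans3}, Corollary \ref{means4}), obtaining $(1+\delta)\log\tfrac{1}{R-1}+c(\delta)$ for each $\delta>0$; a second proof goes through fractal approximation plus Smirnov's bound $D(k)\le1+k^2$. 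Your dual $H^2$/Cauchy--Schwarz argument uses no quasiconformal input at all, is uniform in $\mu$, and even sharpens the intermediate estimate to $\tfrac{1}{2\pi}\int_0^{2\pi}|\mathcal S\mu(Re^{i\theta})|^2d\theta\le\log\tfrac{1}{1-R^{-2}}\le|\log(R-1)|+O(1)$ with no $(1+\delta)$ loss. What the paper's route buys is the conceptual link to quasicircle dimension and Smirnov's theorem, which is the theme of the article; what your route buys is brevity and a sharper constant, and it is essentially the point of departure of Hedenmalm's subsequent work \cite{hedenmalm}: the strict inequality $\Sigma^2<1$ is obtained there by quantifying the defect in precisely your Cauchy--Schwarz step, since near-equality would force $|g_v'(\zeta)|$ to behave like $(1-|\zeta|^2)^{-1}$, which is impossible because $\log|g_v'|$ is harmonic while $-\log(1-|\zeta|^2)$ is not — the same obstruction the paper uses in Section \ref{se:Fuchsian}. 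So your identification of the weight $1-|\zeta|^2$ as the crux is exactly right.
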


We give two different proofs for \eqref{goal3}, one using holomorphic motions and quasiconformal geometry in Section \ref{sec:upper}, and another based on complex dynamics and fractal approximation in Section \ref{sec:fractal}.

In view of McMullen's identity and the possible sharpness of Smirnov's dimension bounds, it is natural to expect that the bound \eqref{goal3} is optimal with $\Sigma^2=1$, and in the first version of this paper we formulated a conjecture to that extent. However, after having read our manuscript, H\aa kan Hedenmalm managed to show \cite{hedenmalm} that actually $\Sigma^2 <1$.

For lower bounds on $\Sigma^2$, we produce examples in Section \ref{se:lowerbound} showing:

\begin{theorem} \label{goal5} There exists a Beltrami coefficient $|\mu| \leqslant \chi_{\mathbb{D}}$ such that 
\begin{equation*}
 \sigma^2(\mathcal{S}\mu) >  0.87913.
\end{equation*}
\end{theorem}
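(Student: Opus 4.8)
My plan is to produce an explicit polynomial Julia set which is a $k$-quasicircle with $\Hdim = 1 + c\,k^2 + o(k^2)$ for a concrete constant $c > 0.87913$, and then invoke McMullen's identity \eqref{eq:mcmullen}. By the dynamical interpretation of $\Sigma^2$, the family of Julia sets $J(f_t)$ of a one-parameter holomorphic family of polynomials degenerating from the unit circle gives a holomorphic family $\varphi_t$ with a $B$-invariant Beltrami coefficient, and \eqref{eq:mcmullen} says $2\,\frac{d^2}{dt^2}\big|_{t=0}\Hdim J(f_t) = \sigma^2(\mathcal{S}\mu)$ for the corresponding $\mu$. So it suffices to (i) exhibit a family whose dimension grows with the right quadratic coefficient in $t$, and (ii) control the quasiconformal distortion $k = k(t)$ precisely enough to translate a bound in $t$ into the claimed bound $1 + 0.87913\,k^2$.

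\smallskip

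\noindent\textbf{Step 1: Choice of the family.} Start from $z \mapsto z^d$, whose Julia set is $\mathbb{S}^1$, and perturb, e.g. by $f_t(z) = z^d + t\,p(z)$ or by a Blaschke-product model, choosing the perturbation direction $p$ (equivalently the invariant Beltrami coefficient $\mu$) to make $\sigma^2(\mathcal{S}\mu)$ as large as possible among dynamically realizable coefficients. The natural candidates are coefficients supported on the full disk with $|\mu|=1$ and an argument chosen so that $\mathcal{S}\mu$ concentrates its $L^2$ growth optimally; concretely one expects $\mu$ of the form $\bar z^m/|z|^m$ or a finite combination adapted to the dynamics of $z^d$. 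The value $0.87913$ should come out of optimizing a computable spectral-type quantity over such model coefficients.

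\smallskip

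\noindent\textbf{Step 2: Dimension asymptotics via thermodynamic formalism.} Compute $\frac{d^2}{dt^2}\big|_{t=0}\Hdim J(f_t)$ using Ruelle's formula / Bowen's pressure equation for the hyperbolic Julia sets $J(f_t)$, or equivalently McMullen's formula \eqref{eq:mcmullen} applied directly to the uniformizing maps $\varphi_t$ of the complement. Since the unperturbed dynamics is $z^d$ and its transfer operator is completely explicit, the second-order term is a concrete sum/integral — this is where the numerical constant $0.87913$ is pinned down. Linear response (first variation) vanishes because $\Hdim = 1$ is a critical point, so only the Hessian survives.

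\smallskip

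\noindent\textbf{Step 3 (the crux): sharp distortion estimate.} The quasiconformal constant $k(t)$ of $J(f_t)$ as a quasicircle is what converts ``$\Hdim = 1 + a t^2 + \cdots$'' into ``$\Hdim = 1 + c\,k^2 + \cdots$'' with $c = a / (dk/dt)^2|_0$. A crude bound from the $\lambda$-lemma in the parameter space gives $k(t) \le C|t|$ with a suboptimal $C$, which would only yield a lower bound for $\Sigma^2$ smaller than $0.87913$. The improvement, as the abstract signals, is to estimate $k$ more carefully: track how the holomorphic motion of $J(f_0)=\mathbb{S}^1$ is realized, using that the Beltrami coefficient of the motion along the family is not arbitrary but comes from pulling back a fixed coefficient under the dynamics, so its $L^\infty$ norm — hence $k$ — grows more slowly in $t$ than the generic $\lambda$-lemma prediction. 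This sharper control of $k(t)$, not the dimension computation, is the main obstacle: it requires understanding the precise linearization of the holomorphic motion at $t=0$ and showing the relevant operator norm (an $\infty \to \infty$ bound, not an $L^2$ bound) is governed by the same extremal data that appears in Step 2, so that $a$ and $(dk/dt)^2$ combine to give exactly $c = \sigma^2(\mathcal{S}\mu) > 0.87913$.

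\smallskip

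\noindent\textbf{Step 4: conclusion.} Having produced a family with $\Hdim J(f_t) = 1 + 0.87913\,k(t)^2 + o(k(t)^2)$ and $k(t) \to 0$, McMullen's identity gives $\sigma^2(\mathcal{S}\mu) = 2\cdot\frac{d^2}{dt^2}\big|_0 \Hdim \ge 0.87913$ for the associated dynamical coefficient $\mu$ with $|\mu| \le \chi_{\mathbb{D}}$; since we can arrange strict inequality at the chosen constant, $\sigma^2(\mathcal{S}\mu) > 0.87913$, which is Theorem \ref{goal5} and hence $\Sigma^2 \ge 0.87913$.
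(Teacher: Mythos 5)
Your plan has the right raw ingredients (perturbations of $z\mapsto z^d$, McMullen's identity, and the realization that one must beat the $\lambda$-lemma bound on the distortion), but the step that actually produces the number $0.87913$ is missing, and it is precisely the step you defer. The dimension asymptotics of Step 2 do \emph{not} pin down $0.87913$: for $P_t(z)=z^d+tz$ they give $\Hdim\,\mathcal J(P_t)=1+\tfrac{(d-1)^2}{4d^2\log d}|t|^2+\cdots$, which via the $\lambda$-lemma parametrization only yields $\sigma^2$-lower bounds of about $0.30$ (the ``$\lambda$-lemma'' column of the paper's Table 2). The extra factor $c_d^{-2}=4d^{-2/(d-1)}$ that lifts this to $0.879$ at $d=20$ comes entirely from your Step 3, for which you offer no construction: no explicit Beltrami coefficient, no proof that $k(t)=c_d|t|/2+\mathcal O(t^2)$ with $c_d=d^{1/(d-1)}/2$, and no identification of the ``extremal data''. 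Saying the coefficient should look like $\bar z^m/|z|^m$ or a finite combination is not enough; the actual extremal object is an infinite sum of coefficients $(\bar z/|z|)^{n_j-2}$ on dynamically defined annuli $A(r_j,r_{j+1})$ with $n_j=(d-1)d^j$, $r_j=\rho_0^{1/n_j}$, and the optimization over $\rho_0$ (optimum $\rho_0=d^{d/(1-d)}$) is exactly where $c_d$, hence $0.87913$, is created. Without that, Step 4 cannot close: to apply McMullen's identity and deduce a bound on $\sigma^2(\mathcal S\mu)$ for a $\mu$ with $|\mu|\le\chi_{\mathbb{D}}$, you must exhibit a unit-norm $\mu$ whose Beurling transform equals $c_d^{-1}v'$, which is the unproven content.

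There is also a structural inefficiency worth noting: for the statement at hand the detour through Hausdorff dimension and quasiconformal extensions is unnecessary, and in the paper the logic runs in the opposite direction. One constructs $\mu$ with $|\mu|=\chi_{\mathbb{D}}$ directly so that $\mathcal C\mu$ satisfies (up to an explicit multiplicative constant) the functional equation $v(z^d)=dz^{d-1}v(z)+z$; then $\mathcal S\mu=-c_d^{-1}v'$ is an explicit constant times a lacunary-type series, and $\sigma^2(\mathcal S\mu)=4[\rho_0^{1/d}-\rho_0]^2/\log d$ is computed by orthogonality and maximized over $\rho_0$ and $d$ (best at $d=20$), giving the theorem with no reference to dimensions, $k(t)$, or McMullen's formula. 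The improved distortion estimate (your Step 3) is then \emph{deduced} from this explicit $\mu$ together with a correction by an infinitesimally trivial deformation, rather than being an input. So your proposal, as written, is circular in effect: the missing piece of Step 3 is exactly the explicit construction that, once available, renders Steps 2 and 4 superfluous for this theorem.
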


In fact, our construction gives new bounds for the quasiconformal distortion of certain polynomial Julia sets:

\begin{theorem} \label{julia} 
Consider the polynomials $P_t(z)=z^d+t\, z$. For $\; |t| < 1$, the Julia set $\mathcal J(P_t)$ is a Jordan curve which can be expressed as the image of the unit circle
by a  $k$-quasiconformal map of $\C$, where
$$
k = \frac{d^{\frac{1}{d-1}}}{4}|t| + \mathcal{O}(|t|^2).
$$
In particular, when $d=20$ and $|t|$ is small, $k \approx 0.585 \cdot \frac{|t|}{2}$ and 
$\mathcal J(P_t)$ is a $k$-quasicircle with 
\begin{equation}
\label{dimbound}
\Hdim \, \mathcal J(P_t) \approx 1+0.87913 \cdot k^2.
\end{equation}
\end{theorem}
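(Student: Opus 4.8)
The plan is to reduce Theorem~\ref{julia} to the general bound \eqref{eq:smirnov-intro} combined with a sharp estimate of the quasiconformal dilatation of the uniformizing map. First I would uniformize the complement of the Julia set. Since $P_t(z)=z^d+tz$ has a superattracting fixed point at $\infty$, there is a B\"ottcher coordinate $\psi_t\colon \DD^* \to \widehat{\C}\setminus \overline{\mathcal J(P_t)}$ conjugating $z\mapsto z^d$ to $P_t$; for $|t|<1$ the critical points of $P_t$ lie in the filled Julia set, so this Riemann map extends to the boundary and $\mathcal J(P_t)=\psi_t(\mathbb S^1)$ is a Jordan curve. The family $\{\psi_t\}$ is holomorphic in $t$ with $\psi_0(z)=z$, so it is exactly of the type considered in \eqref{eq:smirnov-intro}; hence $\mathcal J(P_t)$ is a $k(t)$-quasicircle with $\Hdim \mathcal J(P_t)\le 1+k(t)^2/4+\mathcal O(k^4)$, and the whole point is to pin down $k(t)$ to leading order in $|t|$.

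The key step is therefore to compute the infinitesimal Beltrami coefficient of the holomorphic motion of $\mathcal J(P_t)$ and to show that a straightforward application of the $\lambda$-lemma overestimates $k$. To do this I would pass to a better parametrization: conjugate $P_t$ by the scaling $z\mapsto \lambda z$ with $\lambda^{d-1}=1/t$ wait — more precisely, set $Q_c(z)=\frac1c(P_t(cz))$ for a suitable $c=c(t)$, chosen so that $Q_c(z)=z^d+z+\dots$ or, better, so that the family degenerates in a controlled way; the quantity $d^{1/(d-1)}$ in the statement strongly suggests that one rescales so the critical points sit at a fixed location and the relevant parameter becomes $d^{1/(d-1)}t$. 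With this normalization one writes the motion $t\mapsto \mathcal J(P_t)$ as a holomorphic motion of $\mathbb S^1$ and differentiates at $t=0$: the derivative vector field $v=\frac{d\psi_t}{dt}|_{t=0}$ on $\DD^*$ satisfies a cohomological/functional equation coming from the B\"ottcher conjugacy ($v\circ(z^d)=d z^{d-1} v + $ perturbation term), which one solves by expanding in a Laurent/power series. The dominant Fourier mode of $v'$ on $\mathbb S^1$ then determines the extremal Beltrami coefficient $\mu$ of the motion, and its sup norm gives $|t|/k \to 4 d^{-1/(d-1)}$, i.e. $k=\frac{d^{1/(d-1)}}{4}|t|+\mathcal O(|t|^2)$. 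The improvement over the naive $\lambda$-lemma bound comes from the fact that the motion is not merely a generic holomorphic motion over the $t$-disk but carries extra structure (it is generated by a single quasiconformal deformation whose Beltrami coefficient can be taken supported optimally), so one can beat the universal $(1-\sqrt{1-|t|^2})/|t|$ factor.

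Once $k(t)=\tfrac{d^{1/(d-1)}}{4}|t|+\mathcal O(|t|^2)$ is established, the dimension statement \eqref{dimbound} requires matching the upper bound from \eqref{eq:smirnov-intro} with a lower bound of the same quadratic order. For the lower bound I would invoke McMullen's identity \eqref{eq:mcmullen}: because $\mathcal J(P_t)$ is a polynomial Julia set, the Beltrami coefficient of the motion is invariant under the dynamics (it is pulled back by $z\mapsto z^d$ on $\DD^*$), so $\{\psi_t\}$ is one of McMullen's special families and $2\,\frac{d^2}{dt^2}|_{t=0}\Hdim\mathcal J(P_t)=\sigma^2(v')$. Evaluating $\sigma^2(v')$ amounts to computing the asymptotic variance of the explicit Bloch function $v'$ found above; for $d=20$ this numerical evaluation yields $\sigma^2(v')$ such that, after dividing by the $k^2$-rate $\bigl(d^{1/(d-1)}/4\bigr)^2=0.585^2/4$, one gets the constant $0.87913$, which is exactly $\Sigma^2\ge 0.87913$ in Theorem~\ref{goal5}. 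The main obstacle is the middle step: setting up the correct rescaled parameter and rigorously solving the linearized B\"ottcher equation to identify the extremal $\mu$ and verify that its $L^\infty$ norm really is $4 d^{-1/(d-1)}|t|^{-1}$ to leading order — in particular showing the naive $\lambda$-lemma estimate is genuinely improvable here and controlling the $\mathcal O(|t|^2)$ error uniformly. The asymptotic-variance computation for $d=20$ is then a (delicate but routine) explicit calculation, and the dimension bounds follow by sandwiching between \eqref{eq:smirnov-intro} and \eqref{eq:mcmullen}.
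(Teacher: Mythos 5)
Your skeleton agrees with the paper's up to a point: the B\"ottcher conjugacy $\varphi_t\colon\DD^*\to A_{P_t}(\infty)$, the functional equation $v(z^d)=dz^{d-1}v(z)+z$ forcing the lacunary series \eqref{eq:lacunaryjulia}, and the dimension asymptotics (the paper simply quotes \eqref{eq:dimjulia}, which is McMullen's identity with $\sigma^2(v')=(d-1)^2/(d^2\log d)$) are all exactly what the paper uses. But the heart of the theorem is the improved distortion constant, and here your proposed mechanism does not work and omits the two steps the paper actually needs. A linear rescaling $z\mapsto cz$ of the dynamical plane is a conformal change of coordinates, so it cannot alter the quasiconformal geometry of $\mathcal J(P_t)$ or the quasicircle constant; and the ``dominant Fourier mode of $v'$'' cannot determine an efficient Beltrami coefficient, since one needs a coefficient whose Beurling transform reproduces the \emph{entire} lacunary series $v'$ on $\DD^*$. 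What the paper does instead (Lemmas \ref{lemma:basic-coefficient} and \ref{mu}) is construct an explicit rotationally symmetric ``shell'' coefficient $\mu_d$, one annulus $A(r_j,r_{j+1})$ with $r_j=\rho_0^{1/((d-1)d^j)}$ per lacunary term, eventually invariant under $z\mapsto z^d$, with $\mathcal C\mu_d=-\tfrac{2d}{d-1}[\rho_0^{1/d}-\rho_0]\,v$ on $\DD^*$; optimising $\rho_0=d^{d/(1-d)}$ gives $v'=-c_d\,\mathcal S\mu_d$ with $c_d=d^{1/(d-1)}/2$, and this optimisation---not any rescaling of critical points---is where $d^{1/(d-1)}$ comes from.

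Two further steps are missing from your plan. First, having an infinitesimal representative of small norm is not yet an extension of $\varphi_t$: the paper takes the $\lambda$-lemma extension $H_t$ with $\mu_{H_t}=t\mu_0+\mathcal O(t^2)$, notes that $\mu_0-\mu_d^\#$ (with $\mu_d^\#=-c_d\mu_d$) is infinitesimally trivial because both have the same Beurling transform $v'$ on $\DD^*$, and invokes \cite[Lemma V.7.1]{lehto} to build corrections $N_t$, identity on $\DD^*$, so that $H_t\circ N_t^{-1}$ has dilatation $t\mu_d^\#+\mathcal O(t^2)$; without this correction argument you only know the vector field, not the norm of any actual extension. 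Second, the factor $4$ in $k=\tfrac{d^{1/(d-1)}}{4}|t|$ is not the sup norm of that extension (which is $c_d|t|=\tfrac{d^{1/(d-1)}}{2}|t|$) but comes from the additional antisymmetric symmetrisation of Section \ref{subsec:quasicircles} (K\"uhnau--Smirnov), which converts a $|t|$-type dilatation into a quasicircle constant roughly half as large; your proposal conflates these two quantities. Finally, the concluding ``sandwich between \eqref{eq:smirnov-intro} and \eqref{eq:mcmullen}'' is both unnecessary and insufficient: Smirnov's upper bound $1+k^2$ has the wrong constant, whereas the exact second-order asymptotics \eqref{eq:dimjulia} (equivalently, McMullen's formula for this dynamical family) already yield \eqref{dimbound} once $k(t)$ is pinned down.
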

 Note that the distortion estimates in Theorem \ref{julia} are strictly better (for $d \ge 3$) than those given by a straightforward use of the $\lambda$-lemma.
 For a detailed discussion, see  Section \ref{se:lowerbound}.
 In terms of the dimension distortion of quasicircles, Theorem \ref{julia} improves upon all previously known examples. For instance,
 the holomorphic snowflake construction of \cite{astalarohdeschramm95} gives a $k$-quasicircle of dimension $\approx 1+0.69\, k^2$.
 
In order to further explicate the relationship between asymptotic variance and dimension asymptotics, consider the function
\begin{equation*}
D(k)= \sup\{ \Hdim \, \Gamma : \Gamma \mbox{ is a $k$-quasicircle} \}, \quad 0\le k <1.
\end{equation*}
The fractal approximation principle of Section \ref{sec:fractal} roughly says that infinitesimally, it is sufficient to consider certain quasicircles, namely 
 nearly circular polynomial Julia sets. 
  As a consequence, we prove: 
\begin{theorem}
\label{thm:sigmaanddimension}
\begin{equation}
 \Sigma^2 \le \liminf_{k \to 0} \frac{D(k)-1}{k^2}.
 \end{equation}
\end{theorem}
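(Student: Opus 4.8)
The plan is to establish the inequality $\Sigma^2 \le \liminf_{k\to 0} \frac{D(k)-1}{k^2}$ by combining McMullen's identity \eqref{eq:mcmullen} with the fractal approximation scheme announced for Section~\ref{sec:fractal}. Fix $\varepsilon > 0$ and choose a Beltrami coefficient $\mu_0$ with $|\mu_0| \le \chi_{\DD}$ and $\sigma^2(\mathcal{S}\mu_0) > \Sigma^2 - \varepsilon$. The first step is to use fractal approximation to replace $\mu_0$ by a "dynamical" Beltrami coefficient: one that is invariant under a finite Blaschke product (equivalently, one coming from a nearly circular polynomial Julia set as in Theorem~\ref{julia}), while losing at most $\varepsilon$ in the asymptotic variance. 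So there is a dynamical coefficient $\mu$, still with $|\mu| \le \chi_{\DD}$, such that $\sigma^2(\mathcal{S}\mu) > \Sigma^2 - 2\varepsilon$. The point of passing to a dynamical $\mu$ is that only for such coefficients does McMullen's exact formula \eqref{eq:mcmullen} apply, turning the asymptotic variance into an honest second derivative of Hausdorff dimension.

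The second step is to run the holomorphic family. For $t \in \DD$ let $\varphi_t$ be the normalized conformal map of $\DD^*$ associated to the Beltrami coefficient $t\mu$ (solving the Beltrami equation with coefficient $t\mu$ inside $\DD$, conformal outside), so that $v = \frac{d\varphi_t}{dt}\big|_{t=0}$ satisfies $v' = \mathcal{S}\mu$. Because $\mu$ is dynamical, the curves $\varphi_t(\mathbb{S}^1)$ are $k$-quasicircles with $k = |t|$ (the dilatation of the $t\mu$-quasiconformal extension), and McMullen's identity gives
\begin{equation*}
\Hdim \varphi_t(\mathbb{S}^1) = 1 + \frac{\sigma^2(\mathcal{S}\mu)}{2}\,|t|^2 + o(|t|^2), \qquad t \to 0,
\end{equation*}
using that the first-order term vanishes (the curve is a circle at $t=0$) and that $\Hdim$ is real-analytic in $t$ along this dynamical family. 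Since $\varphi_t(\mathbb{S}^1)$ is a $k$-quasicircle with $k = |t|$, by definition $\Hdim \varphi_t(\mathbb{S}^1) \le D(|t|)$, hence
\begin{equation*}
\frac{D(|t|) - 1}{|t|^2} \ge \frac{\sigma^2(\mathcal{S}\mu)}{2} + o(1) > \frac{\Sigma^2 - 2\varepsilon}{2} + o(1).
\end{equation*}

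Wait — this yields $\liminf_{k\to 0}\frac{D(k)-1}{k^2} \ge \Sigma^2/2$, which is off by a factor of $2$ from the claim. The resolution, and the point I would be most careful about, is the precise normalization of asymptotic variance versus dimension: in \eqref{eq:mcmullen} the family is $\varphi_t$ with $\varphi_0 = \id$ and $v = \frac{d}{dt}\varphi_t$, but in the quasiconformal setup the relation between the parameter $t$ and the quasiconformal distortion $k$ is $k \sim |t|/2$ rather than $k = |t|$ — exactly as in Theorem~\ref{julia}, where the naive $\lambda$-lemma gives $k \le |t|$ but the Julia set construction has dilatation $\tfrac{|t|}{2}(1+o(1))$ contributed by the normalization of the Beltrami equation on $\DD$ versus $\C$. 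With $k = |t|/2 + o(|t|)$ one gets $\frac{D(k)-1}{k^2} \ge \frac{\sigma^2(\mathcal{S}\mu)}{2}\cdot\frac{|t|^2}{k^2} + o(1) = 2\sigma^2(\mathcal{S}\mu) + o(1)$... which now overshoots. The honest resolution is that McMullen's constant and the correct $t$-to-$k$ dictionary must be tracked together; the clean statement is that along the dynamical family one has $\Hdim\varphi_t(\mathbb{S}^1) - 1 = \big(\sigma^2(\mathcal{S}\mu) + o(1)\big)\,k^2$ with $k$ the actual dilatation, after which taking $\varepsilon \to 0$ finishes the proof. The main obstacle, then, is not any one estimate but assembling two ingredients developed elsewhere in the paper — (i) fractal approximation of an arbitrary $\mu$ by a dynamical one with almost the same asymptotic variance, and (ii) the correct constant in the passage from $\sigma^2(\mathcal{S}\mu)$ to the quadratic coefficient of $\Hdim$ as a function of the genuine quasiconformal distortion $k$ — and checking that the error terms are uniform enough to survive the $\liminf$.
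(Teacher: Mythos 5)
Your overall strategy is the paper's: reduce to a dynamical (eventually-invariant) coefficient via the first equality of Theorem \ref{fractal-approximation}, apply McMullen's formula (Theorem \ref{thm:mcmullen61}) along the family $\varphi_t$, and compare the dilatation parameter $t$ with the quasicircle constant $k$. However, the quantitative heart of the argument is left unresolved: you first expand \eqref{eq:mcmullen} incorrectly and then, after noticing the mismatch, merely assert the relation $\Hdim\varphi_t(\mathbb{S}^1)-1=(\sigma^2(\mathcal S\mu)+o(1))\,k^2$ without deriving it. Concretely, \eqref{eq:mcmullen} says $2\,\frac{d^2}{dt^2}\big|_{t=0}\Hdim\varphi_t(\mathbb{S}^1)=\sigma^2(v')$, so the Taylor expansion is $\Hdim\varphi_t(\mathbb{S}^1)=1+\tfrac{1}{4}\sigma^2(\mathcal S\mu)\,|t|^2+o(|t|^2)$ (coefficient $\sigma^2/4$, not $\sigma^2/2$; compare \eqref{eq:dimjulia} and the last row of Table \ref{table:singularconformalmaps}). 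The passage from $t$ to $k$ is not a feature of the Julia-set construction of Theorem \ref{julia} (whose gain is the factor $c_d/2$); it is the general K\"uhnau--Smirnov symmetrisation described in Section \ref{subsec:quasicircles}: since $\varphi_t$ is conformal on $\mathbb{D}^*$ and its extension has dilatation at most $|t|$, the curve $\varphi_t(\mathbb{S}^1)$ is a $k$-quasicircle with $|t|=2k/(1+k^2)$, i.e.\ $k=|t|/2+\mathcal O(|t|^3)$. Combining the two correct constants gives exactly $\Hdim\varphi_t(\mathbb{S}^1)-1=\sigma^2(\mathcal S\mu)\,k^2+o(k^2)$ --- there is neither the factor-$2$ deficit nor the overshoot you describe; your apparent overshoot came from pairing the wrong Taylor coefficient $\sigma^2/2$ with the correct dictionary $k\sim|t|/2$.

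Once this bookkeeping is fixed the argument closes as you intend: since $k(t)$ decreases continuously to $0$, every sufficiently small $k$ is attained, so $D(k)\ge 1+\sigma^2(\mathcal S\mu)k^2+o(k^2)$ and hence $\liminf_{k\to0}\frac{D(k)-1}{k^2}\ge\sigma^2(\mathcal S\mu)$ for each $\mu\in M_{\ei}$; taking the supremum and invoking Theorem \ref{fractal-approximation} (which supplies dynamical coefficients with variance within $\varepsilon$ of $\Sigma^2$ --- note McMullen's formula is what requires dynamical invariance, not the quasicircle property, which holds for any $\mu$) yields the theorem. A small additional caution: the justification that the quadratic term survives with a genuine $o(|t|^2)$ error rests on the real-analyticity of $t\mapsto\Hdim\varphi_t(\mathbb{S}^1)$ for Blaschke-invariant coefficients (Theorem \ref{thm:mcmullen61}), which you should cite explicitly rather than treat as automatic.
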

Together with Smirnov's bound \cite{smirnov},
\begin{equation}
\label{eq:smirnovdk}
D(k) \le 1+k^2,
\end{equation}
Theorem \ref{thm:sigmaanddimension} gives an alternative proof for Theorem \ref{thm:upperbound-intro}.
We note that the function $D(k)$ may be also characterised in terms of several other properties 
in place of Hausdorff dimension, see \cite{Arevista}. It would be interesting to know if the reverse inequality in Theorem \ref{thm:sigmaanddimension} holds.

In Section \ref{se:Fuchsian}, we study the fractal approximation question in the Fuchsian setting. One may expect that
it may be possible to approximate $\Sigma^2$  using Beltrami coefficients invariant under co-compact Fuchsian groups.
However, this turns out not to be the case. To this end, we show:
\begin{theorem}
\label{fuchsian-case}
\[\Sigma^2_{\fuchs} \, := \,  \sup_{\mu \in M_{\fuchs},\ |\mu| \le \chi_{\mathbb{D}}} \sigma^2(\mathcal S\mu) \, < \, 2/3.\]
\end{theorem}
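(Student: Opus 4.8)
The plan is to exploit the rigidity of the Fuchsian setting via McMullen's identity \eqref{eq:mcmullen}. For a Beltrami coefficient $\mu$ invariant under a co-compact Fuchsian group $G$, one can upgrade the $\limsup$ in \eqref{eq:asymvariance} to an honest limit using the ergodicity of the geodesic flow on the unit tangent bundle of the compact surface $\DD/G$: the asymptotic variance $\sigma^2(\mathcal S\mu)$ then admits a thermodynamic description as the variance of a H\"older potential with respect to the Patterson--Sullivan (Lebesgue) measure. Concretely, the vector field $v = \mathcal S\mu$ determines an infinitesimal pressure functional, and $\sigma^2(\mathcal S\mu)$ equals the second derivative at $0$ of the pressure $P(t)$ of the family of potentials obtained by deforming the metric by $t\mu$; this is precisely the content of McMullen's formula, which identifies it with $2\frac{d^2}{dt^2}\big|_{t=0}\Hdim\varphi_t(\mathbb S^1)$.

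Next I would combine this with a dimension bound sharper than Smirnov's in the Fuchsian case. The key point is that for $G$-invariant $\mu$ with $|\mu|\le \chi_{\DD}$, the deformed curve $\varphi_t(\mathbb S^1)$ is a limit set of a quasi-Fuchsian group, and its Hausdorff dimension is governed by Bowen's formula together with the distortion estimate coming from the fact that $\mu$ is supported on all of $\DD$ (not, say, concentrated near the boundary). In the Fuchsian picture the relevant holomorphic motion has a controlled dilatation, but more importantly, the transfer operator / pressure computation has a built-in loss: the natural quantity controlling the dimension growth is not $|t|^2$ but a strictly smaller multiple, because the Schwarzian-type derivative $v'$ averaged over the compact surface satisfies a Poincar\'e-series bound that is genuinely better than the universal Bloch bound. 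I would make this precise by expanding $P(t)$ to second order and identifying the coefficient with an $L^2$-norm of $v'$ against Lebesgue measure on $\mathbb S^1$, then estimating that norm using the invariance: the reproducing-kernel (Bergman) structure forces $\|v'\|$ to be controlled by $\|\mu\|_\infty$ with a constant that, after the dust settles, yields $\sigma^2(\mathcal S\mu) < 2/3$.

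The cleanest route to the explicit constant $2/3$ is likely the following. Writing $\mathcal S\mu = v'$ with $\mu$ $G$-invariant and $|\mu|\le 1$, one has $v' = \beta_{\DD^*}\mu$ up to the Bergman projection onto the exterior disk; the asymptotic variance is then comparable to $\|\,\beta\mu\,\|$ in a suitable BMO-type seminorm. Averaging over the compact quotient and using Green's formula on $\DD/G$ relates $\int |v'|^2$ on circles $|z|=R$ to $\int_{\DD/G}|\mu|^2$ weighted by the hyperbolic metric, and the ratio of the boundary integral to the bulk integral is exactly where the factor $2/3$ (versus $1$ in the non-invariant case) appears — heuristically, only two of the three "directions" in the deformation contribute, since the third is killed by $G$-invariance / the trivial deformation. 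I would also need the strictness: this comes from the fact that equality in the relevant Cauchy--Schwarz or pressure-convexity step would force $\mu$ to be an extremal (Teichm\"uller) coefficient of a very special form, which cannot be simultaneously $G$-invariant and of modulus one almost everywhere unless $G$ is trivial.

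The main obstacle I anticipate is establishing the \emph{strict} inequality rather than $\le 2/3$, and pinning down the constant $2/3$ itself: the soft argument (ergodicity + pressure + McMullen) gives a clean variational formula, but extracting the numerical bound requires a careful analysis of the second variation of pressure — specifically, controlling the "off-diagonal" contributions in the transfer operator expansion — and showing they cannot conspire to reach the universal value $1$. A secondary difficulty is handling the dependence on the group $G$ uniformly: the supremum in the definition of $\Sigma^2_{\fuchs}$ is over all co-compact $G$ and all invariant $\mu$, so the bound $2/3$ must not degrade as the surface $\DD/G$ degenerates or its systole shrinks; here I would invoke a compactness/normal-families argument, or better, observe that the pressure-derivative bound is intrinsically local and hence $G$-independent.
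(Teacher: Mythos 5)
There is a genuine gap: your proposal never actually produces the constant $2/3$, and the mechanisms you offer for it do not correspond to real identities. The paper's proof has nothing to do with a ``sharper Smirnov bound'' or a second-order pressure expansion; asserting that the dimension growth of the quasi-Fuchsian deformation is controlled by ``a strictly smaller multiple of $|t|^2$'' is precisely the statement to be proved, so that route is circular as you describe it. Likewise, the claimed Green's-formula relation between $\int_{|z|=R}|v'|^2$ and $\int_{\DD/G}|\mu|^2$, and the heuristic that ``only two of three directions contribute,'' are not where $2/3$ comes from. The actual source is a duality computation in Teichm\"uller theory: for $G$-invariant $\mu$ one has (i) the identity $\|\mu\|_{\WP}^2/\Area(X)=\tfrac32\,\sigma^2(v'_\mu)$, obtained by recognising $\rho^{-2}\bigl(-2v'''\bigr)$ as the harmonic representative of the class of $\mu$, using equidistribution of the circles $\pi(\{|z|=r\})$ on the compact quotient, and invoking the C\'esaro identity $\sigma^2=\sigma^2_4$ (Lemma \ref{lemma:sigmas}); and (ii) the comparison $\|\mu\|_{\WP}^2/\Area(X)\le\|\mu\|_T^2\le\|\mu\|_\infty^2\le 1$, which by duality is just Cauchy--Schwarz applied to $\|q\|_T=\int_X|q|\le\bigl(\Area(X)\bigr)^{1/2}\|q\|_{\WP}$ for holomorphic quadratic differentials $q$. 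Combining (i) and (ii) gives $\sigma^2(\mathcal S\mu)\le 2/3$; none of these steps appear in your sketch.

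Your strictness argument is also misidentified. You claim equality would force a Teichm\"uller coefficient that ``cannot be simultaneously $G$-invariant and of modulus one a.e.\ unless $G$ is trivial,'' but coefficients $\overline q/|q|$ with $q$ a $G$-invariant quadratic differential are exactly such objects, so this obstruction does not exist. The true obstruction is that equality in the Cauchy--Schwarz step would require $|q|/\rho^2$ to be constant on $X$, which is impossible because $\log|q|$ is harmonic off the zeros of $q$ while $\Delta\log\rho^2=2\rho^2\neq 0$; and to get a \emph{uniform} defect $\varepsilon_0$ (independent of the genus, the surface and $q$) one needs the quantitative local version of this fact on a fixed hyperbolic ball, proved by compactness, transported by M\"obius invariance, and globalised by averaging over balls centred on the equidistributing circles. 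Your suggestion to invoke a compactness or locality principle gestures in this direction, but without the reduction to the statement ``$\rho^2$ is not locally the modulus of a holomorphic function'' there is no concrete inequality to make uniform, and the bound $<2/3$ does not follow.
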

Theorem \ref{fuchsian-case} may be viewed as an upper bound for the quotient of the Weil-Petersson and Teichm\"uller metrics,
over all Teichm\"uller spaces $\mathcal T_g$ with $g \ge 2$.
(To make the bound genus-independent, one needs to normalise the hyperbolic area of Riemann surfaces to be 1.) The proof follows
from simple duality arguments and the fact that there is a definite defect in the Cauchy-Schwarz inequality.

Finally, we compare our problem with another method of embedding a conformal map $f$ into a flow:
 \begin{equation} \label{flow3}
 \log f_t'(z) = t \log f'(z), \qquad t \in \DD.
\end{equation}
In this case, the derivative of the infinitesimal vector field at $t=0$  is just the Bloch function $\log f'(z)$.
However, even if $f$ itself is univalent, the univalence of $f_t$  is only guaranteed for $|t| \le 1/4$, see \cite{pfaltz}.
One advantage of the notion \eqref{eq:Sigma} and holomorphic flows parametrised by Beltrami equations is that they do not suffer from this ``univalency gap''. 

In the case of domains bounded by regular fractals and the corresponding equivariant Riemann mappings $f(z)$, 
we have  several interrelated dynamical and  geometric characteristics:

\begin{itemize}
\item The {\em integral means spectrum} of a conformal map:
\begin{equation}
\beta_f(\tau) = \limsup_{r \to 1} \frac{\log \int_{|z|=r} |(f')^\tau| d\theta}{\log \frac{1}{1-r}}, \qquad \tau \in \mathbb{C}.
\end{equation}
\item
The {\em asymptotic variance}  a Bloch function $g \in \mathcal B$:
\begin{equation}
\sigma^2(g) = \limsup_{r\to1} \frac{1}{2\pi |\log(1-r)|} \int_{|z|=r} |g(z)|^2 d\theta.
\end{equation}
\item The {\em LIL constant} of a conformal map is defined as the essential supremum of $C_{\LIL}(f, \theta)$ over $\theta \in [0, 2\pi)$ where
\begin{equation}\label{lilil}
C_{\LIL}(f, \theta) = \limsup_{r \to 1} \frac{\log |f'(re^{i\theta})|}{\sqrt{\log \frac{1}{1-r} \log \log\log  \frac{1}{1-r}}}.
\end{equation}
\end{itemize}

\begin{theorem} 
\label{dynamical-connections}
Suppose $f(z)$ is a conformal map, such that the image of the unit circle $f(\mathbb{S}^1)$ is a Jordan curve, invariant under a hyperbolic conformal dynamical system.
Then, 
\begin{equation}\label{1234}
2 \frac{d^2}{d\tau^2}\biggl |_{\tau=0} \beta_f(\tau) = \sigma^2(\log f') = C^2_{\LIL}(f),
\end{equation}
where $\beta(\tau)$ is the integral means spectrum, $\sigma^2$ is the asymptotic variance of the Bloch function $\log f'$, and $C_{\LIL}$ denotes
the constant in the law of the iterated logarithm \eqref{lilil}.
\end{theorem}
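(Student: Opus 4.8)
The plan is to reduce all three quantities in \eqref{1234} to the thermodynamic formalism of an expanding Markov map. Since $\Gamma := f(\mathbb{S}^1)$ is a Jordan curve invariant under a hyperbolic conformal dynamical system $F$, conjugating by the boundary extension of $f$ turns the dynamics into an expanding Markov map $T\colon\mathbb{S}^1\to\mathbb{S}^1$ (a finite Blaschke product in the model cases, such as $z\mapsto z^d$ for the Julia sets of Theorem~\ref{julia}, or a Bowen--Series-type map in the Fuchsian setting), and the pull-back under $f$ of harmonic measure on $\Gamma$ is a $T$-invariant Gibbs measure $\nu$, equivalent to Lebesgue with bounded density; as circular averages with respect to $d\theta$ and to $d\nu$ differ only by bounded factors, we may work with $\nu$ throughout. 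Fix a Markov partition; for $\zeta\in\mathbb{S}^1$ let $I_n(\zeta)$ be the level-$n$ cylinder containing $\zeta$. The crucial analytic input is the bounded-distortion comparison obtained by feeding the Koebe distortion theorem into the inverse branches of $T^n$: for $z=r\zeta$ with $1-r$ comparable to $|I_n(\zeta)|$,
\begin{equation*}
\log|f'(z)| = S_n\psi(\zeta) + O(1),\qquad 1-|z|\asymp e^{-\lambda n},
\end{equation*}
where $S_n\psi = \psi+\psi\circ T+\dots+\psi\circ T^{n-1}$, $\psi$ is a Hölder potential on $\mathbb{S}^1$ (the boundary derivative of $f$ read through the coding) and $\lambda=\int\log|T'|\,d\nu>0$ is the Lyapunov exponent. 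Since $\log|f'|$ is harmonic, its circular means are bounded, so $\psi$ has zero mean: $\int\psi\,d\nu=0$.

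Granting this model, the equality $2\beta_f''(0)=\sigma^2(\log f')$ follows directly from thermodynamic formalism. By the Ruelle--Perron--Frobenius theory (spectral gap of the transfer operator on Hölder functions) the free energy $\Lambda(s)=\lim_{n\to\infty}\frac1n\log\int e^{sS_n\psi}\,d\nu$ exists, is real-analytic near $s=0$ with $\Lambda(0)=0$, $\Lambda'(0)=\int\psi\,d\nu=0$ and $\Lambda''(0)=V$, where $V=\sum_{m\in\mathbb{Z}}\int\psi\cdot(\psi\circ T^{|m|})\,d\nu$ is the Green--Kubo variance (equivalently $\int(S_n\psi)^2\,d\nu=Vn+O(1)$), and, most importantly, $\log\int e^{sS_n\psi}\,d\nu=n\Lambda(s)+O(1)$ uniformly for real $s$ near $0$, so that all cumulants are controlled simultaneously — this uniform log-linearity is exactly what is unavailable for a general conformal map. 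Feeding the comparison estimate into the standard stopping-time analysis relating $\int_{|z|=r}|f'|^s\,d\theta$ to the free energy of $\psi$ (so that the combinatorial depth is $n\asymp|\log(1-r)|/\lambda$) gives $\beta_f(s)=\Lambda(s)/\lambda$ for real $s$ near $0$, where the $\limsup$ in \eqref{1234} is in fact a limit and the derivative there is taken along the real axis; hence $\beta_f$ is real-analytic near $0$ with $\beta_f(0)=\beta_f'(0)=0$ and $\beta_f''(0)=V/\lambda$. The same analysis with $s=0$ applied to the square shows that the variance of the Birkhoff-sum process $\log|f'|$ on circles grows linearly, $\frac1{2\pi}\int_{|z|=r}(\log|f'|)^2\,d\theta=\frac{V}{\lambda}|\log(1-r)|+O\big(\sqrt{|\log(1-r)|}\,\big)$, so $\sigma^2(\log|f'|)=V/\lambda$. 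Finally, for any Bloch function $g$ holomorphic on the disk, expanding $g$ in a power series and using the orthogonality of the Fourier modes on $|z|=r$ yields $\int_{|z|=r}|g|^2\,d\theta=2\int_{|z|=r}(\re g)^2\,d\theta+O(1)$, hence $\sigma^2(g)=2\sigma^2(\re g)$; applying this to $g=\log f'$ gives $\sigma^2(\log f')=2\sigma^2(\log|f'|)=2V/\lambda=2\beta_f''(0)$.

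For the remaining equality $\sigma^2(\log f')=C^2_{\LIL}(f)$, invoke the law of the iterated logarithm for Birkhoff sums of a Hölder potential over an expanding Markov map with respect to a Gibbs measure (Philipp--Stout, via the almost sure invariance principle): for $\nu$-a.e.\ $\zeta\in\mathbb{S}^1$,
\begin{equation*}
\limsup_{n\to\infty}\frac{S_n\psi(\zeta)}{\sqrt{2Vn\log\log n}}=1.
\end{equation*}
Inserting the comparison estimate $\log|f'(r\zeta)|=S_{n(r)}\psi(\zeta)+O(1)$ with $n(r)=|\log(1-r)|/\lambda+O(1)$, so that $\log\log n(r)=\log\log\log\frac1{1-r}+o(1)$, turns this into $C_{\LIL}(f,\zeta)=\sqrt{2V/\lambda}$ for $\nu$-almost every $\zeta$; since $C_{\LIL}(f)$ is by definition the essential supremum over $\zeta$, we obtain $C_{\LIL}(f)=\sqrt{2V/\lambda}$, whence $C^2_{\LIL}(f)=2V/\lambda=\sigma^2(\log f')$.

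The main obstacle is the construction and justification of the dynamical model — establishing the uniform bounded-distortion comparison $\log|f'(r\zeta)|=S_n\psi(\zeta)+O(1)$, matching the radial approach $r\to 1$ to the combinatorial depth $n$ via a stopping-time argument, and verifying that the pull-back of harmonic measure on $\Gamma$ is a Gibbs state for a Hölder potential (so that the transfer-operator machinery, the Green--Kubo formula and the LIL all apply). It is precisely this dynamical rigidity that upgrades the $\limsup$'s in the definitions of $\beta_f$, $\sigma^2$ and $C_{\LIL}$ to genuine limits and converts the one-sided universal bounds valid for arbitrary conformal maps into the exact identities \eqref{1234}.
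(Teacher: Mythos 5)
Your overall frame --- reduce all three quantities to Birkhoff sums of a H\"older potential $\psi$ over the expanding circle dynamics, quote Ruelle--Perron--Frobenius theory for $\beta_f''(0)$ and the dynamical LIL for $C_{\LIL}$, and use $\sigma^2(g)=2\sigma^2(\re g)$ to pass between $\log f'$ and $\log|f'|$ --- is the same as the paper's, and your constants all come out consistently ($2\beta_f''(0)=2V/\lambda=\sigma^2(\log f')=C_{\LIL}^2$). Indeed, the paper simply cites Binder and Makarov for the $\beta$-link and Przytycki--Urba\'nski--Zdunik for the LIL-link, so sketching those is acceptable. The one equality the paper actually proves --- dynamical variance versus $\sigma^2(\log f')$ --- is precisely where your argument has a genuine gap.

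The gap is the radius-to-depth dictionary. Your comparison is stated with $1-|z|\asymp e^{-\lambda n}$, i.e.\ $n(r)=|\log(1-r)|/\lambda+O(1)$ uniformly in $\zeta$; this is false unless $\log|T'|$ is cohomologous to a constant. The correct statement is $S_{n}\log|T'|(\zeta)=|\log(1-r)|+O(1)$ at the stopping depth, so $n(r,\zeta)$ fluctuates around $|\log(1-r)|/\lambda$ on the CLT scale $\sqrt{|\log(1-r)|}$ (and more along LIL subsequences) --- the same order as the quantities you are measuring. Consequently the asserted asymptotics $\frac{1}{2\pi}\int_{|z|=r}(\log|f'|)^2\,d\theta=\frac{V}{\lambda}|\log(1-r)|+O(\sqrt{|\log(1-r)|})$ does not follow by substitution: writing $S_{n(r,\zeta)}\psi=S_N\psi+D(\zeta)$ with $N=[|\log(1-r)|/\lambda]$, the worst-case bounds give $\int D^2\,dm\asymp N$ and a cross term comparable to the main term $VN$, so one must exploit decorrelation between the random window length and the window sums (an ASIP/renewal-type argument) --- or avoid the stopping time altogether. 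The paper does the latter: because the conjugated dynamics $F=H\circ B\circ H^{-1}$ is holomorphic across the curve, $\psi$ is realised as a virtual coboundary, $\psi=\log H'\circ B-\log H'$ on $\mathbb{D}^*$ and $\psi=\log F'\circ H-\log B'$ near the circle (this is also what makes $\psi$ H\"older of mean zero; your phrase ``the boundary derivative of $f$ read through the coding'' needs exactly this mechanism, since $f'$ itself need not extend continuously), and then McMullen's coboundary theorem (Theorem \ref{virtual-cocycle-thm}) yields $\var(\psi)/\int\log|B'|\,dm=\sigma^2(\log f')$ with no depth matching at all. The same matching issue infects your LIL conversion as written (inserting ``$n(r)=|\log(1-r)|/\lambda+O(1)$'' into the Philipp--Stout LIL); that conversion is the nontrivial content of the cited Przytycki--Urba\'nski--Zdunik papers, not a one-line substitution. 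A minor further inaccuracy: $\beta_f(s)=\Lambda(s)/\lambda$ is not exact --- the correct relation is Bowen's pressure equation $P(s\psi-(1+\beta)\log|T'|)=0$ --- though it agrees to second order at $s=0$, which is all you need.
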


We emphasise that the above quantities are not equal in general, but only for special domains $\Omega$ that have fractal boundary.
For these domains, the limits in the definitions of $\beta_f(\tau)$ and $\sigma^2(\log f')$ exist, while $ C_{\LIL}(f, \theta)$ is a constant function
(up to a set of measure 0).

The equalities in \eqref{1234} are mediated by a fourth quantity involving the {\em dynamical asymptotic variance} 
of a H\"older continuous potential 
from thermodynamic formalism. The equality between the dynamical variance and $C^2_{\LIL}$ is established in \cite{PUZ1, PUZ2}, while the works \cite{binder, makarov99} give the connection to the integral means $\beta(\tau)$.
The missing link, it seems, is the connection between the dynamical variance and  $\sigma^2$, which can be proved using a global analogue of
McMullen's coboundary relation. Details will be given in Section \ref{sec:dynamics}. 
We note that an alternative approach connecting $\beta(\tau)$ and $\sigma^2$ directly has been considered in the special case of
polynomial Julia sets, see \cite{kayumov}. 

With these connections in mind, we relate our quantity $\Sigma^2$ to the universal integral means spectrum $B(\tau)= \sup_{f} \beta_f(\tau)$:
\begin{theorem} \label{thm:beta}
\[ \liminf_{\tau \to 0} \frac{B(\tau)}{\tau^2/4} \geqslant \Sigma^2.\]
\end{theorem}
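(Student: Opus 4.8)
The plan is to use $B(\tau)=\sup_f\beta_f(\tau)$ to reduce the inequality to a single, well-chosen conformal map, and then feed in the fractal approximation of Section~\ref{sec:fractal} together with Theorem~\ref{dynamical-connections}. It suffices to show that for every $\epsilon>0$ there is a conformal map $f$ with $\liminf_{\tau\to0}\beta_f(\tau)/(\tau^2/4)\ge\Sigma^2-\epsilon$, since letting $\epsilon\to0$ then gives the theorem. The map $f$ I would take is the equivariant Riemann map of a polynomial Julia set supplied by the fractal approximation scheme.

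First I would invoke Section~\ref{sec:fractal}: for each $\epsilon>0$ it produces a polynomial $P$ whose Julia set $\mathcal{J}(P)=f_P(\mathbb{S}^1)$ is a quasicircle invariant under the hyperbolic dynamics of $P$, with $f_P$ the conformal part of a holomorphic family whose infinitesimal Beltrami coefficient $\mu_P$ satisfies $|\mu_P|\le\chi_{\mathbb D}$ and $\sigma^2(\mathcal{S}\mu_P)>\Sigma^2-\epsilon$; here one must use that polynomial-dynamical coefficients are dense, in the value of $\sigma^2$, in the supremum \eqref{eq:Sigma}. The global analogue of McMullen's coboundary relation (the ``missing link'' of Section~\ref{sec:dynamics}) then identifies $\log f_P'$ with $v'=\mathcal{S}\mu_P$ up to a dynamical coboundary; since coboundaries do not affect asymptotic variance, $\sigma^2(\log f_P')=\sigma^2(\mathcal{S}\mu_P)>\Sigma^2-\epsilon$.

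Second, because $\mathcal{J}(P)$ is invariant under a hyperbolic conformal dynamical system, Theorem~\ref{dynamical-connections} and standard thermodynamic formalism give that $\tau\mapsto\beta_{f_P}(\tau)$ is real-analytic and convex near $0$, with $\beta_{f_P}(0)=\beta_{f_P}'(0)=0$ and $2\beta_{f_P}''(0)=\sigma^2(\log f_P')$; hence $\beta_{f_P}(\tau)=\tfrac{\tau^2}{4}\sigma^2(\log f_P')+\mathcal{O}(\tau^3)$ and $\lim_{\tau\to0}\beta_{f_P}(\tau)/(\tau^2/4)=\sigma^2(\log f_P')$. Putting the two steps together,
\[
\liminf_{\tau\to0}\frac{B(\tau)}{\tau^2/4}\ \ge\ \lim_{\tau\to0}\frac{\beta_{f_P}(\tau)}{\tau^2/4}\ =\ \sigma^2(\log f_P')\ >\ \Sigma^2-\epsilon,
\]
and $\epsilon\to0$ finishes the proof.

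The main obstacle is the first step. One needs the fractal approximation scheme to certify that $\Sigma^2$ really is approached by polynomial-dynamical Beltrami coefficients — this is delicate because, by Theorem~\ref{fuchsian-case}, the analogous statement for co-compact Fuchsian groups fails, so the approximating dynamical systems must be chosen carefully — and one needs the quantitative coboundary relation transferring $\sigma^2(\mathcal{S}\mu)$ to the honest Bloch asymptotic variance $\sigma^2(\log f_P')$, with the normalisations (holomorphic-flow parameter versus the degree and parameter of $P$) matched so that the variances of the approximating maps genuinely accumulate at $\Sigma^2$ rather than at $\mathcal{O}(k^2)$ scale. Granting Sections~\ref{sec:fractal}--\ref{sec:dynamics}, the rest — the reduction through $B(\tau)=\sup_f\beta_f(\tau)$ and the Taylor expansion of the pressure function — is soft; I would also check that $\beta_{f_P}$ is handled for both signs of $\tau$ (or that $B$ may be symmetrised near $0$) so that the two-sided $\liminf$ is the quantity being bounded.
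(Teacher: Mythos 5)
There is a genuine gap, and it sits exactly at the point you flag but then assume away: the claimed identity $\sigma^2(\log f_P')=\sigma^2(\mathcal{S}\mu_P)$. The coboundary relation of Section~\ref{sec:dynamics} does not identify $\log f_P'$ with $\mathcal{S}\mu_P$ up to a variance-neutral term; what it identifies (Theorem~\ref{virtual-cocycle-thm}, equation \eqref{eq:missinglink}) is the Bloch asymptotic variance $\sigma^2(\log H')$ of a \emph{given} conjugacy $H$ with the dynamical variance of its potential $\psi$. The relation between $\log H_t'$ and $\mathcal{S}\mu$ is only infinitesimal: $\log H_t'=t\,\mathcal{S}\mu+\mathcal{O}(t^2)$ in the Bloch seminorm. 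For the nearly circular Julia sets produced by the fractal approximation scheme (Corollary~\ref{cor:Juliaapprox}), the Riemann map $f_P=\varphi_t$ is a small perturbation of the identity, so $\sigma^2(\log f_P')=|t|^2\sigma^2(\mathcal{S}\mu)+\mathcal{O}(|t|^3)$, which is of order $k(t)^2$, not close to $\Sigma^2-\epsilon$. Hence your chain $\liminf_{\tau\to0}B(\tau)/(\tau^2/4)\ge\sigma^2(\log f_P')>\Sigma^2-\epsilon$ breaks at the middle inequality: with the maps you chose, the middle quantity is essentially $0$, and Theorem~\ref{dynamical-connections} applied to them only yields the trivial-looking bound $\liminf B(\tau)/(\tau^2/4)\ge |t|^2\sigma^2(\mathcal{S}\mu)$ with $|t|$ small.

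The missing idea, which is the actual content of the paper's proof, is how to pass from the infinitesimal scale to parameter values $|t|$ close to $1$ without losing the variance. Fixing a dynamical (eventually-invariant) coefficient $\mu$ with $\sigma^2(\mathcal{S}\mu)>\Sigma^2-\epsilon$ (Theorem~\ref{fractal-approximation}), one considers the whole family $H_t=w^{t\mu}$ and the function $u(t)=\sigma^2\bigl(\log H_t'/t\bigr)$, which extends continuously to $u(0)=\sigma^2(\mathcal{S}\mu)$. Theorem~\ref{thm:u-subharmonic} shows, via the identification with the dynamical variance and the decay of correlations, that $u$ is real-analytic and subharmonic; the sub-mean-value property then supplies parameters $t$ with $|t|$ arbitrarily close to $1$ and $u(t)\ge u(0)$, i.e.\ $\sigma^2(\log H_t')\ge |t|^2\sigma^2(\mathcal{S}\mu)$. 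Only then does Theorem~\ref{dynamical-connections}, applied to the genuinely distorted map $H_t$, give $\liminf_{\tau\to0}\beta_{H_t}(\tau)/(\tau^2/4)\ge|t|^2\sigma^2(\mathcal{S}\mu)$, and letting $|t|\to1$ and taking the supremum over dynamical $\mu$ finishes the proof. Without this subharmonicity step (or some substitute controlling $\sigma^2(\log H_t')$ at large $|t|$), your argument does not close; the higher-order terms in $t$ could a priori destroy the variance precisely when the distortion becomes large.
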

\noindent 
In view of the lower bound for $\Sigma^2$ given by Theorem \ref{goal5},  this improves upon the previous best known lower bound \cite{HK} for the behaviour of the universal integral means spectrum near the origin.
The proof of   Theorem \ref{thm:beta} along with additional numerical advances is presented in Section \ref{sec:dynamics}.

While the two approaches above for constructing flows of conformal maps are somewhat different, there is a relation: singular quasicircles lead to singular conformal maps via welding-type procedures \cite{prause-smirnov}.   The parallels are summarised in Table \ref{table:singularconformalmaps} below, where exact equalities hold only in the dynamical setting.

\smallskip
\begin{table}[htbp]
\renewcommand{\arraystretch}{1.2}
\begin{center}
\begin{tabular}{|c|c|c|}
\hline
Holomorphic motion & $\overline{\partial} \varphi_t =t \, \mu \, \partial \varphi_t$ & 
$\log f_t' = t \log f'$ \\ \hline
Bloch function $v'$  & $\mathcal{S}\mu$ & $\log f'$ \\  \hline
Univalence & $\|\mu\|_\infty \leqslant 1$ & $f$ conformal \\ \hline
$\sigma^2(v') = c$& $\Hdim \varphi_t(\mathbb{S}^1) = 1+{c} \, |t|^2/4 + \ldots$ & $\beta_f(\tau) = {c} \, {\tau^2}/{4} + \ldots$  \\ \hline
Examples & \multicolumn{2}{c|}{Lacunary series}\\ \hline
\end{tabular}
\end{center}
\bigskip
\caption{Singular conformal maps and the growth of Bloch functions}
\label{table:singularconformalmaps}
\end{table}

\section{Bergman projection and Bloch functions}
\label{section:background}

In this section, we introduce the notion of asymptotic variance for Bloch functions and discuss some of its basic properties. 

\subsection{Asymptotic variance.}
The Bloch space $\mathcal B$ consists of analytic functions $g$ in $\mathbb{D}$ which satisfy
\begin{equation*}
 \|g\|_{\mathcal B}:=\sup_{z \in \mathbb{D}} \, (1-|z|^2) |g'(z)| < \infty.
\end{equation*}
Note that $\| \cdot \|_{\mathcal B}$ is only a seminorm on $\mathcal B$. A function $g_0 \in \mathcal B$ belongs to the little Bloch space $\mathcal B_0$ if 
$$\label{lbloch}
\lim_{|z|\to 1^-}(1-|z|^2)|g_0'(z)|=0.
$$ To measure the boundary growth of a Bloch function $g \in \mathcal B$, we define its asymptotic variance by 
\begin{equation}\label{eq:asymvariance2}
 {\sigma}^2(g) :=\frac{1}{2\pi}\limsup_{r\to1^-} \frac{1}{|\log(1-r)|} \int_{0}^{2\pi} |g(re^{i \theta})|^2 d\theta. 
\end{equation} 
Lacunary series provide examples with non-trivial (i.e.~positive) asymptotic variance. For instance,
for $g(z)=\sum_{n=1}^\infty z^{d^n}$ with $d \geq 2$, a quick calculation based on orthogonality shows that
\begin{equation} \label{eq:sigma-lacunary} 
{\sigma}^2(g)= \frac{1}{\log d}.
\end{equation}
Following \cite[Theorem 8.9]{Pomm}, to estimate the asymptotic variance, we use Hardy's identity which says that
\begin{equation} \label{Hardy3}
\left(\frac{1}{4r}\frac{d}{dr}\right)\left(r\frac{d}{dr}\right)\frac{1}{2\pi}\int_0^{2\pi}|g(re^{i\theta})|^2d\theta=\frac{1}{2\pi}\int_0^{2\pi}|g'(re^{i\theta})|^2d\theta 
\end{equation}
\begin{equation} \label{Hardy4}
\leq \|g \|^2_{\mathcal B} \left(\frac{1}{1-r^2}\right)^2 
=\|g \|^2_{\mathcal B} \left(\frac{1}{4r}\frac{d}{dr}\right)\left(r\frac{d}{dr}\right) \log\frac{1}{1-r^2}. \nonumber
\end{equation}
From (\ref{Hardy3}), it follows that  ${\sigma}^2(g) \leqslant  \|g\|_{\mathcal B}^2$. In particular, the asymptotic variance of a Bloch function is finite.
It is also easy to see that adding an element from the  little Bloch space does not affect the asymptotic variance, i.e.~$\sigma^2(g+g_0) = \sigma^2(g)$. 

\subsection{Beurling transform and the Bergman projection}
\label{subsection:SvsP}
For a measurable function $\mu$ with $|\mu| \leqslant \chi_{\mathbb{D}}$, the Beurling transform $g=\mathcal{S}\mu$ is  an analytic function in the exterior disk $\mathbb{D}^*=\{z: |z|>1\}$ which satisfies a Bloch bound of the form $\| g \|_{{\mathcal B}^*} := |g'(z)| (|z|^2-1) \leqslant C$. Note that we use the notation ${\mathcal B}^*$ for  functions in $\mathbb{D}^*$ --
we reserve 
 the symbol $\mathcal B$ for the standard Bloch space in the unit disk $\mathbb{D}$.
By passing to the unit disk, we are naturally led to the Bergman projection 
\begin{equation}
\label{eq:bergman-def}
 P\mu(z)=\frac{1}{\pi} \int_{\mathbb{D}}\frac{\mu(w)dm(w)}{(1-z\overline{w})^2}
\end{equation}
and its action on $L^\infty$-functions.
Indeed, comparing \eqref{eq:beurling} and \eqref{eq:bergman-def}, we see that
$P\mu(1/z) = - \, z^2 \mathcal{S}\mu_0(z)$
for $ \mu_0(w) = \mu(\overline{w})$ and $z \in \mathbb D^*$.
From this connection between the Beurling transform and the Bergman projection, it follows that
\begin{equation} \label{eq:bergman-sigma}
\Sigma^2 = \sup_{|\mu| \le \chi_{\mathbb{D}}} {\sigma}^2(\mathcal S \mu) =  \sup_{|\mu| \le \chi_{\mathbb{D}}} {\sigma}^2(P\mu). 
\end{equation}
In view of the above equation, the Beurling transform and the Bergman projection are mostly interchangeable.
Due to natural connections with the quasiconformal literature, we mostly work with the Beurling transform. However, in this section on a priori bounds, it is preferable to work with the Bergman projection to keep the discussion in the disk.

\subsection{Pointwise estimates}

According to \cite{perala}, the seminorm of the Bergman projection from $L^\infty(\mathbb{D}) \to \mathcal B$ is $8/\pi$. Integrating (\ref{Hardy3}), we get
\begin{equation*}
\frac{1}{2\pi}\int_0^{2\pi}|P\mu(re^{i\theta})|^2d\theta \leq \left(\frac{8}{\pi}\right)^2  \log\frac{1}{1-r^2}, \qquad r \to 1^-,
\end{equation*}
which implies that $\Sigma^2 \le (8/\pi)^2$. 
One can also equip the Bloch space with seminorms that use higher order derivatives
\begin{equation} \label{mnorm}
\|f\|_{\mathcal B,m}=\sup_{z \in \mathbb{D}}(1-|z|^2)^m |f^{(m)}(z)|,
\end{equation}
where $m \geqslant 1$ is an integer.
Very recently, Kalaj and Vujadinovi\'c \cite{kalvuj} calculated the seminorm of the Bergman projection when the Bloch space is equipped with \eqref{mnorm}. According to their result,
\begin{equation} \label{mnorm3}
\|P\|_{\mathcal B,m}=\frac{\Gamma(2+m)\Gamma(m)}{\Gamma^2(m/2+1)}.
\end{equation}
It is possible to apply the differential operator in \eqref{Hardy3} $m$ times and use the pointwise estimates \eqref{mnorm3}. 
In this way, one ends up with the upper bounds 
\begin{equation}
\label{eq:improved-pw-bounds}
 \sigma^2(\mathcal{S}\mu)  = {\sigma}^2(P\mu)  \le \frac{\Gamma(2+m)^2\Gamma(m)^2}{\Gamma(2m)\Gamma^4(m/2+1)}.
\end{equation}
Putting $m=2$ in (\ref{eq:improved-pw-bounds}), one obtains that $\sigma^2(\mathcal{S}\mu)\leq 6$, which is a slight improvement to $(8/\pi)^2$ and is the best upper bound that can be achieved with this argument.  Using quasiconformal methods in Section \ref{sec:upper}, we will show the significantly better upper bound $\sigma^2(\mathcal{S}\mu)\le 1.$

\subsection{C\'esaro integral averages}

In Section \ref{sec:fractal} on fractal approximation, we will need the C\'esaro integral averages from
 \cite[Section 6]{mcmullen}. Following McMullen, for $f \in \mathcal B$, $m \ge 1$ and $r \in [0,1)$,  we define 
\begin{equation*}
\sigma^2_{2m}(f,r)=\frac{1}{\Gamma(2m)} \frac{1}{|\log(1-r)|}\int_0^r \frac{ds}{1-s}\left[\frac{1}{2\pi}\int_0^{2\pi} 
\biggl |(1-s^2)^m f^{(m)}(se^{i\theta}) \biggr |^2 d\theta \right]
\end{equation*}
and
\begin{equation}
\label{eq:mcm-cesaro}
\sigma^2_{2m}(f)=\limsup_{r\to 1^-}\sigma^2_{2m}(f,r).
\end{equation}

We will need \cite[Theorem 6.3]{mcmullen} in a slightly more general form, where we allow the use of ``limsup'' instead of requiring the existence of a limit:

\begin{lemma} \label{lemma:sigmas}
For $f \in \mathcal B$,
\begin{equation}\label{chain}
\sigma^2(f) = \sigma^2_{2}(f) = \sigma^2_4(f) = \sigma^2_6(f) = \ldots
\end{equation}
Furthermore, if the limit as $r \to 1$ in $\sigma^2_{2m}(f)$ exists for some $m \ge 0$, then the limit as $r \to 1$ exists in $\sigma^2_{2m}(f)$ for all $m \ge 0$.
\end{lemma}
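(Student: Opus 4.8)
The plan is to reduce everything to McMullen's original Theorem 6.3, whose content is the chain of equalities in \eqref{chain} together with the comparability of Ces\`aro averages taken with different numbers of derivatives, under the hypothesis that the relevant limit exists. Since McMullen works with genuine limits, the first task is to isolate precisely which parts of his argument are sensitive to that hypothesis and which are not. The core identity relating consecutive Ces\`aro averages is an integration-by-parts (a Hardy-type identity, as in \eqref{Hardy3}) applied inside the radial integral defining $\sigma^2_{2m}(f,r)$; this is an \emph{exact} pointwise relation between the integrands at each radius $r$, with error terms that are bounded and tend to $0$ after dividing by $|\log(1-r)|$. Taking $\limsup$ of both sides of an identity of the form $A(r) = B(r) + o(1)$ gives $\limsup A(r) = \limsup B(r)$, so each individual equality $\sigma^2_{2m}(f) = \sigma^2_{2m+2}(f)$ survives verbatim with limsups. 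Iterating along the finite chain from $\sigma^2_2$ up to any $\sigma^2_{2m}$ gives the displayed equalities.

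The one genuinely new point is the boundary equality $\sigma^2(f) = \sigma^2_2(f)$, i.e.\ that the plain asymptotic variance \eqref{eq:asymvariance2} agrees with its Ces\`aro average. First I would note, via Hardy's identity \eqref{Hardy3}, that $\frac{1}{2\pi}\int_0^{2\pi}|f(re^{i\theta})|^2\,d\theta$ and its Ces\`aro-type averages of $|f'|^2$ are linked by two successive radial integrations of a nonnegative quantity; monotonicity and the standard Abelian/Tauberian comparison for logarithmic-weight averages (exactly the mechanism already invoked after \eqref{Hardy4} to get $\sigma^2(g)\le\|g\|_{\mathcal B}^2$) shows that passing to a logarithmic Ces\`aro mean cannot increase the $\limsup$, and the defect is an $O(1)$ term killed by the $1/|\log(1-r)|$ normalization. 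One direction ($\sigma^2_2(f)\le\sigma^2(f)$) follows because the Ces\`aro mean of a sequence has $\limsup$ no larger than that of the sequence; the reverse needs the Hardy identity to recover $\int|f|^2$ from $\int|f'|^2$ up to lower-order terms. Assembling these gives $\sigma^2(f)=\sigma^2_2(f)$, now with limsups throughout.

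For the ``furthermore'' clause: suppose the limit exists in $\sigma^2_{2m_0}(f)$ for some $m_0$. Running the same exact identities $A(r)=B(r)+o(1)$ in reverse — now knowing one side converges — forces the other side to converge to the same value, both upward to larger $m$ and downward to $m=0$ and to the plain variance. The only care needed is that the error terms really are $o(1)$ uniformly (they are: they are linear combinations of boundary values of $(1-r)^j$ times integral means of lower-order derivatives of $f$, all $O((1-r)^{-\text{something}})$ but multiplied back down and divided by $|\log(1-r)|$), so convergence propagates. The main obstacle I anticipate is purely bookkeeping rather than conceptual: verifying that McMullen's integration-by-parts identities, when one does \emph{not} assume a limit, still produce error terms that are $o(1)$ after the logarithmic normalization — in particular checking the boundary contributions at $r$ (not just at $0$) are controlled by the Bloch seminorm uniformly in $r$, so that nothing that was silently absorbed into ``the limit exists'' now escapes as an uncontrolled oscillating term. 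Once that uniform $o(1)$ control is in hand, every step of \cite[Theorem 6.3]{mcmullen} goes through with ``$\lim$'' replaced by ``$\limsup$''.
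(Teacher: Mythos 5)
Your proposal is correct and is essentially the paper's own argument: the paper proves Lemma \ref{lemma:sigmas} simply by remarking that McMullen's proof of \cite[Theorem 6.3]{mcmullen} applies verbatim, and the reason is the one you identify — the Hardy-type/integration-by-parts identities linking $\int|f|^2$ and the successive Ces\`aro averages hold exactly in $r$ with defects bounded uniformly by powers of $\|f\|_{\mathcal B}$, hence $o(1)$ after dividing by $|\log(1-r)|$, so the equalities are insensitive to replacing limits by limsups and simultaneously transfer existence of the limit from one $\sigma^2_{2m}$ to all others. (One small caveat: your remark that $\sigma^2_2(f)\le\sigma^2(f)$ is immediate because a Ces\`aro mean has no larger limsup is imprecise, since $\sigma^2_2$ averages the derivative quantity $(1-s^2)^2\int|f'|^2$ rather than the quantity defining $\sigma^2$; but your main mechanism yields both inequalities, so nothing is lost.)
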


The original proof from \cite{mcmullen} applies in this setting.

\section{Holomorphic families} \label{holom}

Our aim is to understand holomorphic families of conformal maps, and the infinitesimal change of Hausdorff dimension.
The natural setup for this is provided by {\it holomorphic motions} \cite{MSS}, maps
$\Phi:\DD\times A\rightarrow \C$, with $A \subset \C$, such that
\begin{itemize}
\item For a fixed $a\in A$,  the map $\lambda\rightarrow \Phi(\lambda,a)$ is
holomorphic in $\DD$.
\item For a fixed $\lambda\in\DD$, the map 
$a\rightarrow \Phi(\lambda,a)
= \Phi_{\lambda}(a) $
is injective.
\item The mapping $\Phi_{0}$ is the identity on $A$,
\[ \Phi(0,a)=a, \quad  \mbox{ for every $a\in A$.} \]
\end{itemize}

It follows from the works of Ma{\~n}{\'e}-Sad-Sullivan \cite{MSS} and Slodkowski \cite{Slod} that each $ \Phi_{\lambda}$  can be extended to a quasiconformal homeomorphism of $\C$. In other words, each $f =  \Phi_{\lambda}$ is a homeomorphic $W^{1,2}_{loc}(\C)$-solution to the {\it Beltrami equation}
$$\overline \partial f(z) = \mu(z) \partial f(z) \quad \mbox{for a.e.~} z \in \C.
$$
Here the {\it dilatation} $\mu(z) = \mu_\lambda(z)$ is measurable in $z \in \C$, and the mapping  $f$ is called {\it $k$-quasiconformal} if $\| \mu \|_{\infty} \leq k < 1$. As a function of  $\lambda \in \DD$, the dilatation $\mu_\lambda$ is  a holomorphic $L^\infty$-valued function with 
$\|Ê\mu_\lambda\|_{\infty} \leq |\lambda|$, see \cite{bersroyden}. In other words, $\Phi_\lambda$ is a $|\lambda|$-quasiconformal mapping.

Conversely, as is well-known, homeomorphic solutions to the  Beltrami equation can be embedded into holomorphic motions. For this work, we shall need a specific and perhaps non-standard representation of the mappings which quickly implies the embedding. For details, see  Section \ref{sec:upper}.

\subsection{Quasicircles.} \label{subsec:quasicircles}
Let us now consider a holomorphic family of conformal maps $\varphi_t \colon \mathbb{D^*} \to \mathbb{C}$, $t \in \mathbb{D}$ such as the one in the introduction. That is, we assume $\varphi(t,z)=\varphi_t(z)$ is a $\mathbb{D} \times \mathbb{D^*} \to \mathbb{C}$ holomorphic motion which in addition is conformal in the parameter $z$. 
By the previous discussion, each $\varphi_t$ extends to a $|t|$-quasiconformal mapping of $\mathbb{C}$. Moreover, by symmetrising the Beltrami coefficients
like in  \cite{kuhnau,smirnov},
we see that  $\varphi_t(\mathbb{S}^1)$ is a $k$-quasicircle, where $|t|=2k/(1+k^2)$. More precisely, $\varphi_t(\mathbb{S}^1)=f(\mathbb{R\cup\{\infty\}})$ for a $k$-quasiconformal map $f \colon \mathbb{\hat C} \to \mathbb{\hat C}$ of the Riemann sphere $\mathbb{\hat C}$, which is antisymmetric with respect to the real line in the sense that 
\[\mu_f(z)=- \overline{\mu_{f}(\overline{z})} \quad \mbox{for a.e.~$z \in \mathbb{C}$}.
\]
Smirnov used this antisymmetric representation to prove \eqref{eq:smirnovdk}.
In terms of the conformal maps $\varphi_t$, Smirnov's result takes the form mentioned in \eqref{eq:smirnov-intro}.

\subsection{Heuristics for $\sigma^2(\mathcal{S}\mu) \leqslant 1$.}
An estimate based on the $\tau=2$ case of \cite[Theorem 3.3]{prause-smirnov}  tells us roughly that for $R>1$,
\begin{equation}
\label{eq:prause-smirnov}
 \frac{1}{2\pi R} \int_{|z| = R}  | \varphi'_t(z)|^2 |dz|  \; \leqslant \, C(|t|) \, (R-1)^{-|t|^2}.
\end{equation}
 (The precise statement is somewhat weaker but we are not going to use this.) 
A natural strategy for proving $\sigma^2(\mathcal{S}\mu) \leqslant 1$ is to consider the holomorphic motion of principal mappings $\varphi_t$ generated by $\mu$,
\[ \overline \partial \varphi_t = t \mu \, \partial \varphi_t, \quad t \in \mathbb{D}; \qquad \varphi_t(z) = z + \mathcal O (1/z) \quad \mbox{as } z \to \infty.
\]
For the derivatives, we have the Neumann series expansion:
\begin{equation} \label{neumann}
\varphi'_t = \partial  \varphi_t = 1+t \mathcal{S}\mu+t^2 \mathcal{S}\mu \mathcal{S}\mu+ \ldots, \qquad z \in \DD^*.
\end{equation}
In view of this, taking the limit $t \to 0$ in \eqref{eq:prause-smirnov}, one obtains a growth bound (as $R \to 1$) for the integrals $\int_{|z| = R}  |\mathcal{S}\mu|^2 |dz|$.
However, in order to validate this strategy, one needs to have good control on the constant term $C(|t|)$ in \eqref{eq:prause-smirnov}. Namely, one would need to show that $C(|t|) \to 1$ as $t \to 0$ fast enough, for instance at a quadratic rate $C(|t|) \leqslant C^{|t|^2}$. Unfortunately, while the growth exponent  in \eqref{eq:prause-smirnov} is effective, the constant is not. 

In order to make this strategy work, we need two improvements. First, we work with quasiconformal maps that are antisymmetric with respect to the unit circle; and secondly, we use normalised solutions instead of principal solutions. 
One of the key estimates will be Theorem \ref{intmeans11} which is the counterpart of \eqref{eq:prause-smirnov} for antisymmetric maps, but crucially with a multiplicative constant of the form $C(\delta)^{k^2}$.
This naturally complements the Hausdorff measure estimates of \cite{ptut}.

\subsection{Interpolation}

Let $(\Omega,\sigma)$ be a measure space and  $L^p (\Omega,\sigma)$ be the usual spaces of complex-valued $\sigma$-measurable functions on $\Omega$ 
equipped with the (quasi)norms
\[ \| \Phi\|_p  = \left(\int_\Omega |\Phi(x)|^p \;\textrm{d}\sigma(x)\right)^{\frac{1}{p}}\,,\;\;\;0< p<\infty.
\]
In the papers \cite{A} -- \cite{AIPS2},  holomorphic deformations were used to give sharp bounds on the distortion of quasiconformal mappings. In \cite{AIPS}, the method was formulated as a compact and general interpolation lemma:

\begin{lemma}\cite[Interpolation Lemma for the disk]{AIPS}\label{interpolation}
Let $0<p_0 ,p_1 \leqslant \infty$ and
$\;\{\Phi_{\lambda}\,;\; |\lambda| \,<\, 1\}\, \subset\,\mathscr M(\Omega, \sigma)\,$ be an analytic and non-vanishing family of measurable functions  defined on 
a domain $\Omega$. Suppose
$$
M_0\,:=\, \| \Phi_0\|_{p_0} <\infty,\;\;\;  M_1\,:=\, \underset{|\lambda| <1}{\sup} \| \Phi_\lambda\|_{p_1} <\infty \;\;\; \text{and }\; 
M_r \,:=\, \underset{|\lambda|=r}{\sup}\,\|\Phi_\lambda\|_{p_r},
$$
where
\[ 
 \frac{1}{p_r}=\frac{1-r}{1+r}\cdot \frac{1}{p_0} + \frac{2r}{1+r}\cdot \frac{1}{p_1}.\\
\]
\vskip8pt

\noindent Then, for every $\, 0\leqslant r < 1\,$,  we have
\begin{equation}
M_r \leqslant  \;M_0^{\frac{1-r}{1+r}}\cdot M_1^{\frac{2\,r}{1+r}}\;<\,\infty.
\end{equation}
\end{lemma}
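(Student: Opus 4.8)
Here is the route I would take. The plan is to reduce the estimate to Harnack's inequality for positive harmonic functions of the parameter $\lambda$, after linearising the $L^p$-norms by means of the Gibbs variational formula. A Riesz--Thorin/Stein-type argument, building a single holomorphic function of $\lambda$ with $\lambda$-dependent exponents, is a natural first attempt, but it does not use the hypothesis at the single point $\lambda=0$ efficiently: a three-lines or three-circles argument only sees boundary data, and an interior point carries zero harmonic measure. Harnack's inequality, by contrast, assigns the interior point the weight $\frac{1-|\lambda|}{1+|\lambda|}$, which is exactly the exponent appearing in the lemma, so this is the tool to use. Before starting I would make three harmless reductions. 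Since $\|\Phi_r\|_{p_r}$ is the increasing limit of $\|\Phi_r\chi_E\|_{p_r}$ over sets $E$ of finite measure, and since the inequality to be proved is invariant under $\sigma\mapsto c\sigma$, one may assume $\sigma$ is a probability measure; replacing $\Phi_\lambda$ by $\Phi_{e^{i\phi}\lambda}$ (still analytic and non-vanishing, with the same $M_0,M_1$) reduces the claim to bounding $\|\Phi_{\lambda_0}\|_{p_{r_0}}$ for one point $\lambda_0=r_0\in(0,1)$; and replacing $\Omega$ by $\{|\Phi_{r_0}|\le T\}$ (and letting $T\to\infty$ at the very end) one may assume $\|\Phi_{r_0}\|_{p_{r_0}}<\infty$. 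Write $\alpha=\frac{1-r_0}{1+r_0}$, $\beta=\frac{2r_0}{1+r_0}$, so that $\alpha+\beta=1$ and $\frac{1}{p_{r_0}}=\frac{\alpha}{p_0}+\frac{\beta}{p_1}$.

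The key point is that, because each $\Phi_\lambda$ is non-vanishing, $\log\Phi_\lambda(x)$ has a single-valued holomorphic branch on the simply connected disk, so $\lambda\mapsto\log|\Phi_\lambda(x)|$ is \emph{harmonic} for a.e.\ $x$, and hence for any probability measure $\mu\ll\sigma$ the average
\[
 U_\mu(\lambda):=\int_\Omega \log|\Phi_\lambda(x)|\,d\mu(x)
\]
is harmonic in $\lambda\in\mathbb{D}$. (This is exactly where non-vanishing enters: an $L^p$-norm is a supremum, hence only subharmonic in $\lambda$, and the Harnack-type bound below genuinely fails for subharmonic functions.) Take $d\mu^{\ast}=\|\Phi_{r_0}\|_{p_{r_0}}^{-p_{r_0}}\,|\Phi_{r_0}|^{p_{r_0}}\,d\sigma$ and let $D:=\int\log\frac{d\mu^{\ast}}{d\sigma}\,d\mu^{\ast}$ be its relative entropy, finite by the truncation. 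The equality case of Jensen's inequality gives
\[
 \log\|\Phi_{r_0}\|_{p_{r_0}}=U_{\mu^{\ast}}(r_0)-\tfrac{1}{p_{r_0}}D ,
\]
while Jensen's inequality for the concave function $\log$ and the probability measure $\mu^{\ast}$ gives, for every $\lambda$ and every exponent $q$,
\[
 \log\|\Phi_\lambda\|_q\;\ge\;U_{\mu^{\ast}}(\lambda)-\tfrac1qD .
\]
Applying this at $\lambda=0$ with $q=p_0$ and at an arbitrary $\lambda\in\mathbb{D}$ with $q=p_1$ yields $U_{\mu^{\ast}}(0)\le\log M_0+\tfrac{1}{p_0}D$ and $\sup_{\mathbb{D}}U_{\mu^{\ast}}\le\log M_1+\tfrac{1}{p_1}D<\infty$.

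Now I would finish by Harnack's inequality: since $S:=\sup_{\mathbb{D}}U_{\mu^{\ast}}$ is finite, $S-U_{\mu^{\ast}}$ is a non-negative harmonic function on $\mathbb{D}$, so $(S-U_{\mu^{\ast}})(r_0)\ge\alpha\,(S-U_{\mu^{\ast}})(0)$, i.e.\ $U_{\mu^{\ast}}(r_0)\le\alpha\,U_{\mu^{\ast}}(0)+\beta\,S$. Combining with the two bounds above,
\[
 U_{\mu^{\ast}}(r_0)\le\alpha\log M_0+\beta\log M_1+\Bigl(\tfrac{\alpha}{p_0}+\tfrac{\beta}{p_1}\Bigr)D=\alpha\log M_0+\beta\log M_1+\tfrac{1}{p_{r_0}}D ,
\]
where the whole point is that the Harnack weights $\alpha,\beta$ are precisely the interpolation weights, so the entropy term is exactly the one that cancels. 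Subtracting $\tfrac{1}{p_{r_0}}D$ and using the displayed identity for $\log\|\Phi_{r_0}\|_{p_{r_0}}$ gives $\log\|\Phi_{r_0}\|_{p_{r_0}}\le\alpha\log M_0+\beta\log M_1$; undoing the truncation $T\to\infty$ (monotone convergence) and taking the supremum over $|\lambda|=r$ completes the proof.

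The step I expect to demand the most care is the verification that $U_{\mu^{\ast}}$ is genuinely harmonic --- not merely superharmonic --- on all of $\mathbb{D}$, i.e.\ that $\int(\log|\Phi_\lambda|)^-\,d\mu^{\ast}<\infty$ for every $\lambda$, so that the mean-value property survives integration (a nonnegative superharmonic function with an interior singularity would break the Harnack step). This is automatic whenever $\log|\Phi_\lambda|$ is locally integrable uniformly in $\lambda$, which holds in all the applications, where $\Phi_\lambda$ is built from derivatives or Jacobians of a holomorphic family of quasiconformal maps and $\log|\Phi_\lambda|$ has bounded mean oscillation; in general it can be arranged by a further exhaustion, running the Harnack argument on the subdisks $|\lambda|<1-1/n$ and letting $n\to\infty$ so that $\frac{(1-1/n)-r_0}{(1-1/n)+r_0}\to\alpha$. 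The remaining points --- the cases $p_0$ or $p_1$ equal to $\infty$ (where the corresponding entropy term has coefficient $0$ and one uses $\esssup f=\sup_\mu\int f\,d\mu$), and the measure-theoretic bookkeeping of the reductions --- are routine.
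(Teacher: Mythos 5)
Two remarks before the assessment: the paper itself does not prove this lemma (it is quoted from \cite{AIPS}), so I am judging your argument on its own terms. Your mechanism is the right one and, at its core, the same one that makes the cited result work: non-vanishing is used exactly so that $\lambda\mapsto\log|\Phi_\lambda(x)|$ is harmonic, and the weights $\frac{1-r}{1+r}$, $\frac{2r}{1+r}$ are the Harnack weights at an interior point. What is distinctive in your write-up is the Gibbs/entropy linearisation of the $L^p$-norms, which lets you apply Harnack once, to the single averaged harmonic function $U_{\mu^*}$, with the entropy term cancelling precisely because $\frac{\alpha}{p_0}+\frac{\beta}{p_1}=\frac{1}{p_{r_0}}$; this is a clean and correct way to pass from the pointwise harmonicity to the norm inequality, and all the bookkeeping reductions (probability $\sigma$, rotation, truncation, the $p=\infty$ cases) are fine.

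The one genuine gap is the step you flag yourself: harmonicity of $U_{\mu^*}$. Your proposed general fix --- running the argument on the subdisks $|\lambda|<1-1/n$ --- does not address the actual danger, which is not boundary behaviour in $\lambda$ but the possibility that $U_{\mu^*}(\lambda)=-\infty$ at an \emph{interior} point (e.g.\ at $\lambda=0$, if $\int(\log|\Phi_0|)^-\,d\mu^*=\infty$); in that case $S-U_{\mu^*}$ is not a finite harmonic function, the Harnack step collapses, and the intermediate inequality $U_{\mu^*}(r_0)\le\alpha U_{\mu^*}(0)+\beta S$ is simply false as written. The appeal to BMO-regularity ``in the applications'' is likewise not available for the lemma as stated. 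The good news is that the hypotheses you already invoked close the gap: after truncation the density $d\mu^*/d\sigma$ is bounded, and since $t^+\le p_1^{-1}e^{p_1t}$, the bound $\sup_\lambda\|\Phi_\lambda\|_{p_1}\le M_1$ gives $\int(\log|\Phi_\lambda|)^+\,d\mu^*\le C$ \emph{uniformly in} $\lambda$ (immediate when $p_1=\infty$). Hence Fubini--Tonelli applies to circle and disc averages, and $U_{\mu^*}$, with values in $[-\infty,\infty)$ and bounded above, satisfies the exact mean-value identity over every disc compactly contained in $\mathbb{D}$. Since $U_{\mu^*}(r_0)>-\infty$ (again by the truncation), the mean-value identity plus the upper bound forces $\int U_{\mu^*}^-<\infty$ on discs around $r_0$, hence finiteness of $U_{\mu^*}$ there; a chaining argument then gives $U_{\mu^*}>-\infty$ and $U_{\mu^*}\in L^1_{\mathrm{loc}}$ on all of $\mathbb{D}$, and the converse of the mean-value property shows $U_{\mu^*}$ is genuinely harmonic. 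With that insertion your Harnack step, and hence the whole proof, goes through.
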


To be precise, in the lemma we consider analytic families $\Phi_\lambda$ of measurable
functions in $\Omega$, i.e.~jointly measurable functions $ (x, \lambda) \mapsto \Phi_\lambda (x)$ defined on $\Omega \times \DD$, for which  there exists a set $E \subset\Omega$ of $\sigma$-measure zero such that for all $x \in\Omega\setminus E$, the map $\lambda \mapsto \Phi_\lambda(x)$ is analytic and non-vanishing in $\DD$.

For the study of the asymptotic variance of the Beurling transform, we need to combine interpolation with ideas from \cite{smirnov} to take into account the antisymmetric dependence on $\lambda$, see Proposition \ref{apupropo}. In this special setting,  Lemma \ref{interpolation} takes the following form:

\begin{corollary}\label{le:interpolation}
Suppose  $\;\{\Phi_{\lambda}\,;\, \lambda \in \DD\}$ is an analytic family of measurable functions, such that  for every $ \lambda \in \DD$, 
\begin{equation}
\label{eq:Phi-antisym} 
  \Phi_\lambda(x)\not=0 \; \mbox{and } \;  \big|\Phi_\lambda(x)\big| = \big| \Phi_{- \overline \lambda}(x)\big|, \quad \mbox{ for a.e.~}  x \in \Omega.
\end{equation}
Let $0 < p_0,p_1 \leqslant \infty$. Then, for all $0 \leqslant k <1$ and exponents $p_k$ defined by
 \[
 \frac{1}{p_k}=\frac{1-k^2}{1+k^2}\cdot \frac{1}{p_0} + \frac{2k^2}{1+k^2}\cdot \frac{1}{p_1},
\]
we have 
\[ \| \Phi_k \|_{p_k} \leqslant \| \Phi_0 \|_{p_0}^{\frac{1-k^2}{1+k^2}}  \left({\sup}_{\{ |\lambda| <1\} }  \| \Phi_\lambda \|_{p_1} \right)^{\frac{2k^2}{1+k^2}},
\]
assuming that the right hand side is finite.
\end{corollary}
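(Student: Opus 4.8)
The plan is to deduce Corollary \ref{le:interpolation} from Lemma \ref{interpolation} by restricting the given analytic family to a suitable one-dimensional holomorphic slice of the disk on which the antisymmetry \eqref{eq:Phi-antisym} forces the relevant quantities to match up with the structure of Lemma \ref{interpolation}. The antisymmetry $|\Phi_\lambda(x)| = |\Phi_{-\overline\lambda}(x)|$ is invariant under $\lambda \mapsto -\overline\lambda$, i.e.\ under reflection in the imaginary axis; equivalently, writing $\lambda = i\zeta$, the family $\Psi_\zeta(x) := \Phi_{i\zeta}(x)$ satisfies $|\Psi_\zeta(x)| = |\Psi_{\overline\zeta}(x)|$, so $|\Psi_\zeta|$ is even in $\re\zeta$ in an appropriate sense. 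The point of Smirnov's trick is that this kind of symmetry ``squares'' the parameter: a bound that a priori only holds on $|\lambda| < 1$ can be propagated inward to $|\lambda| \le k$ with the exponent $k^2$ rather than $k$ appearing in the interpolation weights.

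Concretely, I would first fix $0 \le k < 1$ and consider the holomorphic disk $\iota \colon \DD \to \DD$, $\iota(\lambda) = $ (a Möbius-type or polynomial map) chosen so that $\iota(0) = 0$ and $\iota$ realises the reparametrisation that turns the $\tfrac{1-r}{1+r}, \tfrac{2r}{1+r}$ weights of Lemma \ref{interpolation} into the $\tfrac{1-k^2}{1+k^2}, \tfrac{2k^2}{1+k^2}$ weights of the corollary at the relevant radius. The natural candidate, following \cite{smirnov}, is to use the symmetry to define an auxiliary analytic family indexed by $\lambda^2$ or by a square-root-type substitution; because $\Phi_\lambda$ and $\Phi_{-\overline\lambda}$ have equal modulus, one can form a new analytic and non-vanishing family $\widetilde\Phi_\lambda$ with $\|\widetilde\Phi_\lambda\|_p = \|\Phi_{\sqrt\lambda}\|_p$ (suitably interpreted, the ambiguity in the square root being harmless precisely because of the antisymmetry), whose parameter disk again has radius $1$, whose value at $\lambda = 0$ is $\Phi_0$, and whose values at $|\lambda| < 1$ are still controlled by $\sup_{|\lambda|<1}\|\Phi_\lambda\|_{p_1}$. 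Then I apply Lemma \ref{interpolation} to $\widetilde\Phi_\lambda$ at radius $r = k^2$: since $\tfrac{1-r}{1+r} = \tfrac{1-k^2}{1+k^2}$ and $\tfrac{2r}{1+r} = \tfrac{2k^2}{1+k^2}$, the exponent $p_r$ of Lemma \ref{interpolation} coincides with the $p_k$ of the corollary, and the conclusion
\[
M_{k^2} \le M_0^{\frac{1-k^2}{1+k^2}} M_1^{\frac{2k^2}{1+k^2}}
\]
reads exactly as $\|\Phi_k\|_{p_k} \le \|\Phi_0\|_{p_0}^{(1-k^2)/(1+k^2)} \big(\sup_{|\lambda|<1}\|\Phi_\lambda\|_{p_1}\big)^{2k^2/(1+k^2)}$ after unwinding the substitution, with finiteness of the right side being the hypothesis.

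The step I expect to be the main obstacle is making the square-root substitution rigorous as a genuine analytic and non-vanishing family of \emph{measurable} functions in the sense required by Lemma \ref{interpolation} — that is, producing a jointly measurable $(x,\lambda) \mapsto \widetilde\Phi_\lambda(x)$, analytic and non-vanishing in $\lambda$ for a.e.\ $x$, whose moduli are the prescribed ones. One cannot just take a pointwise square root of $\lambda$ inside $\Phi$, since $\sqrt{\lambda}$ is not single-valued on $\DD$; the resolution is to exploit \eqref{eq:Phi-antisym} to show that the two branches give the same modulus, and then to build $\widetilde\Phi$ on the slit disk or via a branch that the antisymmetry renders well-defined up to modulus, checking that non-vanishing and the joint measurability/analyticity pass through. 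An alternative, and perhaps cleaner, route avoids square roots altogether: observe directly that the function $F(\lambda) := \log\|\Phi_\lambda\|_{p_{r(\lambda)}}$ (or the appropriate subharmonic majorant coming from the proof of Lemma \ref{interpolation}) is, by the $\lambda \mapsto -\overline\lambda$ symmetry, invariant under reflection in the imaginary axis, hence its restriction to the real diameter is even; then a Schwarz-lemma / Hadamard three-lines argument in the variable $\lambda^2$ gives the improved exponent. Either way, once the symmetrisation is set up, the remainder is a direct invocation of Lemma \ref{interpolation} with $r = k^2$ and bookkeeping of the exponents $1/p_r$, which is routine.
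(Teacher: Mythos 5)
You have the right skeleton — reparametrise by $\lambda^2$ and apply Lemma \ref{interpolation} at $r=k^2$, so that $\tfrac{1-r}{1+r}$ and $\tfrac{2r}{1+r}$ become the weights of the corollary — but the central construction is missing, and the specific ways you propose to fill it do not work as stated. Your main route asks for a family $\widetilde\Phi_\lambda$ with $\|\widetilde\Phi_\lambda\|_p=\|\Phi_{\sqrt\lambda}\|_p$, claiming the branch ambiguity of $\sqrt\lambda$ is harmless by \eqref{eq:Phi-antisym}. It is not: the hypothesis relates $\Phi_w$ to $\Phi_{-\overline w}$, so the two branches $\pm w$ have equal moduli only when $w$ is \emph{real}; for non-real $w$ one has in general $|\Phi_w|\neq|\Phi_{-w}|$, so ``$\Phi_{\sqrt\lambda}$'' has no well-defined modulus and no such family exists (working on a slit disk does not help, since the object does not extend). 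Your alternative route has a parallel gap: the symmetry $\lambda\mapsto-\overline\lambda$ makes the relevant function even only on the real diameter, and evenness on a diameter is not enough to regard it as a function of $\lambda^2$ on the disk, which is what a three-lines/Schwarz argument in $\lambda^2$ would require; moreover subharmonicity of a variable-exponent quantity like $\log\|\Phi_\lambda\|_{p_{r(\lambda)}}$ is exactly the nontrivial content of Lemma \ref{interpolation}, not something to invoke loosely.

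The missing idea, and the paper's actual proof, is to symmetrise over $\lambda\mapsto-\lambda$ at the level of the family: consider $\Phi_\lambda(x)\Phi_{-\lambda}(x)$, which is analytic, non-vanishing and \emph{even} in $\lambda$, so its analytic square root is even and defines a single-valued analytic non-vanishing family $\Psi$ via $\Psi_{\lambda^2}(x)=\sqrt{\Phi_\lambda(x)\Phi_{-\lambda}(x)}$. The point is that you only ever need the moduli to match $|\Phi_\lambda|$ at the two real parameters $0$ and $k$, and there \eqref{eq:Phi-antisym} gives exactly $|\Psi_{k^2}|=|\Phi_k|$ and $|\Psi_0|=|\Phi_0|$; at all other $\lambda$ you do not need equality, only domination, which Cauchy--Schwarz provides: $\|\Psi_{\lambda^2}\|_{p_1}\le\|\Phi_\lambda\|_{p_1}^{1/2}\|\Phi_{-\lambda}\|_{p_1}^{1/2}\le\sup_{|\lambda|<1}\|\Phi_\lambda\|_{p_1}$. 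Applying Lemma \ref{interpolation} to $\Psi$ with $r=k^2$ then gives the corollary. (Your ``averaging'' instinct in the alternative route is morally this geometric mean — $\log|\Psi_{\lambda^2}|=\tfrac12(\log|\Phi_\lambda|+\log|\Phi_{-\lambda}|)$ — but without performing that symmetrisation explicitly the argument does not close.)
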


\begin{proof}
Consider the analytic family $\lambda \mapsto \sqrt{\Phi_\lambda(x)\, \Phi_{-\lambda}(x)}$.
The non-vanishing condition ensures that we can take an analytic square-root. Since the dependence with respect to  $\lambda$ gives an even analytic function, there is a (single-valued) analytic family $\Psi_\lambda$ such that 
$$\Psi_{\lambda^2}(x)=\sqrt{\Phi_\lambda(x)\, \Phi_{-\lambda}(x)}.$$

Observe that $|\Phi_{\lambda}(x)|=|\Psi_{\lambda^2}(x)|$ for real $\lambda$ by the condition \eqref{eq:Phi-antisym}.
By the Cauchy-Schwarz inequality, $\Psi_\lambda$ satisfies the same $L^{p_1}$-bounds:
\[ \|\Psi_{\lambda^2}\|_{p_1} \, \leq \,  \|\Phi_\lambda\|_{p_1}^{1/2}\|\Phi_{-\lambda}\|_{p_1}^{1/2} \, \leq \, {\sup}_{\{ |\lambda| <1\} }  \| \Phi_\lambda \|_{p_1}, \qquad \lambda \in \DD.
\]
We can now apply the Interpolation Lemma for the non-vanishing family ${\Psi}_\lambda$ with $r=k^2$ to get
\begin{align*}
\| \Phi_k \|_{p_k} = \| {\Psi}_{k^2}\|_{p_k} 
 &\leqslant  \| {\Psi}_{0}\|_{p_0}^{\frac{1-k^2}{1+k^2}}\left({\sup}_{\{ |\lambda| <1\} }  \| {\Psi}_\lambda \|_{p_1} \right)^{\frac{2k^2}{1+k^2}}\\
&\leqslant \| \Phi_0 \|_{p_0}^{\frac{1-k^2}{1+k^2}}  \left({\sup}_{\{ |\lambda| <1\} }\| \Phi_\lambda \|_{p_1} \right)^{\frac{2k^2}{1+k^2}}.
\end{align*}
\end{proof}

\section{Upper bounds}  \label{sec:upper}

In this section, we apply quasiconformal methods for finding bounds on integral means to the problem of maximising the asymptotic variance  $\sigma^2(\mathcal{S}\mu)$  of  the Beurling transform. Our aim is to establish  the following result:
 \begin{theorem} \label{upperthm}
 Suppose $\mu$ is measurable with  $|\mu| \leqslant \chi_{\mathbb{D}}$.
  Then, for all $1 < R < 2$,
\begin{equation} \label{growth1}  \frac{1}{2\pi} \int_{0}^{2\pi} |{\mathcal S} \mu (Re^{i\theta})|^2  d\theta \leq (1+\delta) \log\frac{1}{R-1} + c(\delta), \qquad 0 < \delta < 1,
 \end{equation}
 where $c(\delta) < \infty$ is a constant depending only on $\delta$.
 \end{theorem}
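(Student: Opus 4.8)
The plan is to embed $\mu$ into an antisymmetric holomorphic motion and then invoke the interpolation machinery (Corollary \ref{le:interpolation}) with the right pair of exponents, so that the constant comes out in the controllable form $C(\delta)^{k^2}$ rather than as an uncontrolled $C(|t|)$. Concretely, starting from $\mu$ with $|\mu|\le\chi_{\DD}$, I would form the Beltrami coefficient on $\hat\C$ that is antisymmetric with respect to the unit circle (reflecting $\mu$ across $\mathbb{S}^1$ with the sign flip $\mu(z)\mapsto-\overline{\mu(1/\bar z)}\,(\ldots)$, as in the Smirnov/Kühnau symmetrisation referenced in \S\ref{subsec:quasicircles}), and consider the family of normalised solutions $f_\lambda$ to $\overline\partial f_\lambda=\lambda\mu\,\partial f_\lambda$. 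The point of using \emph{normalised} rather than principal solutions, and of the antisymmetry, is exactly the ``two improvements'' flagged in \S\ref{subsec:quasicircles}: the derivatives $f_\lambda'$ restricted to $\DD^*$ have the Neumann expansion $f_\lambda'=1+\lambda\mathcal S\mu+\lambda^2\mathcal S\mu\mathcal S\mu+\cdots$, so $\mathcal S\mu=\frac{d}{d\lambda}\big|_{\lambda=0}f_\lambda'$, and the quantity to be controlled, $\frac{1}{2\pi}\int_0^{2\pi}|\mathcal S\mu(Re^{i\theta})|^2\,d\theta$, will be recovered from an integral-means estimate for $|f_\lambda'|$ by differentiating twice in $\lambda$ at $0$ (a Cauchy-estimate / Taylor-coefficient extraction on the circle $|\lambda|=k$).

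The core estimate to establish is the antisymmetric analogue of \eqref{eq:prause-smirnov}: for the family above there should be a bound of the shape
\[
\frac{1}{2\pi}\int_0^{2\pi}|f_\lambda'(Re^{i\theta})|^2\,d\theta\;\le\;C(\delta)^{k^2}\,(R-1)^{-k^2-\delta'},\qquad |\lambda|=k,
\]
with $\delta'\to0$ as $\delta\to0$ and $C(\delta)<\infty$. To get this I would apply Corollary \ref{le:interpolation} with $\Omega$ a circle $|z|=R$ (or an annular region), $\Phi_\lambda(z)=f_\lambda'(z)$ — which is non-vanishing on $\DD^*$ since $f_\lambda$ is a homeomorphism and conformal there — and endpoint exponents chosen so that $p_0=2$ gives the trivial bound $\|\Phi_0\|_{p_0}=1$ (since $f_0=\mathrm{id}$), while $p_1$ is taken large and paired with the crude uniform bound $\sup_{|\lambda|<1}\|f_\lambda'\|_{p_1}<\infty$ coming from standard $L^p$-integrability of derivatives of quasiconformal maps (an $L^{2/(1-k)}$-type Astala-area bound, or the $\tau$-version of \cite[Thm 3.3]{prause-smirnov}). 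The interpolation exponent is $1/p_k=\frac{1-k^2}{1+k^2}\cdot\frac12+\frac{2k^2}{1+k^2}\cdot\frac1{p_1}$, so $p_k\to2$ as $k\to0$; a short argument bridging $p_k$ and the fixed exponent $2$ on the circle $|z|=R$ (Hölder on the circle, absorbing the discrepancy into the $(R-1)$-power at the cost of $\delta$) then yields the displayed estimate with the exponent $-k^2-\delta'$. Here one also needs the $R$-dependence of the constant to be harmless for $1<R<2$, which is where restricting to that range is convenient.

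Finally, with the antisymmetric integral-means bound in hand, I extract the $\lambda^2$-coefficient: writing $f_\lambda'=1+\lambda\,\mathcal S\mu+O(\lambda^2)$ and using $\frac{1}{2\pi}\int|f_\lambda'|^2\,d\theta = 1 + 2\,\re\!\big(\lambda\cdot\frac1{2\pi}\int\mathcal S\mu\,d\theta\big)+|\lambda|^2\cdot\frac1{2\pi}\int|\mathcal S\mu|^2\,d\theta+\cdots$, averaging over the circle $|\lambda|=k$ (which kills the cross terms and isolates $k^2\cdot\frac1{2\pi}\int|\mathcal S\mu|^2$, plus higher-order-in-$k$ remainders controlled uniformly), I divide by $k^2$ and let $k\to0$ to obtain
\[
\frac{1}{2\pi}\int_0^{2\pi}|\mathcal S\mu(Re^{i\theta})|^2\,d\theta\;\le\;\log\frac{1}{(R-1)^{1+\delta}}\cdot\big(1+o(1)\big)+c(\delta),
\]
since $\log C(\delta)^{k^2}/k^2=\log C(\delta)$ stays bounded. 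Choosing $\delta$ appropriately gives \eqref{growth1}. The main obstacle I anticipate is exactly the step that the whole strategy is designed around: making the constant in the antisymmetric integral-means inequality genuinely of the form $C(\delta)^{k^2}$ (equivalently, ensuring the \emph{additive} constant after taking $\log$ and dividing by $k^2$ stays bounded as $k\to0$) — the straightforward Koebe/area estimates only give an uncontrolled multiplicative constant, and it is the combination of antisymmetry (halving the effective dilatation, as in Smirnov) with the interpolation lemma's multiplicative structure $M_0^{(1-k^2)/(1+k^2)}M_1^{2k^2/(1+k^2)}$ and the normalisation $\|\Phi_0\|=1$ that must be orchestrated carefully to achieve this. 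A secondary technical point is justifying the term-by-term differentiation in $\lambda$ under the integral and the uniformity of the $O(\lambda^3)$ remainders, which should follow from the holomorphic dependence of $f_\lambda$ on $\lambda$ and Cauchy estimates on a slightly larger disk of radii.
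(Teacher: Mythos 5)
Your overall strategy (antisymmetrise, embed in a holomorphic motion, apply Corollary \ref{le:interpolation}, extract the $\lambda^2$-coefficient) is indeed the paper's, but the two steps you lean on most are exactly where the proposal breaks down. First, the endpoint data for the interpolation cannot be chosen as you suggest. A uniform bound $M_1=\sup_{|\lambda|<1}\|f_\lambda'\|_{p_1}<\infty$ with $p_1>2$ is false: as $|\lambda|\to 1$ the maps are only $\frac{1+|\lambda|}{1-|\lambda|}$-quasiconformal and the integrability exponent of their derivatives degenerates to $2$; worse, after antisymmetrisation the coefficient is supported on \emph{both} sides of $\mathbb{S}^1$, so $f_\lambda$ is not conformal near $|z|=R$ and $f_\lambda'$ has no controlled (or even pointwise meaningful) restriction to that circle. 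Moreover, with $M_0=1$ and an $R$-independent $M_1$, the interpolation output would be $R$-independent, and your H\"older ``bridging'' between $p_k>2$ and $2$ on a probability measure produces no $(R-1)$-power either; so the necessary growth $\log\frac{1}{R-1}$ cannot appear, and if instead you let $M_1$ depend on $R$ you lose the sharp exponent. The paper's Theorem \ref{intmeans11} avoids this by first reducing (splitting off $\chi_{B(0,3/4)}\mu$, rescaling $z\mapsto Rz$, replacing $\mu$ by $(1-\delta)\mu$) to antisymmetric coefficients supported off the annulus $A(1/R,R)$, so the maps are conformal near $\mathbb S^1$, and then interpolating the logarithmic derivatives $\Phi_\lambda=zf_\lambda'/f_\lambda$ on the \emph{unit} circle with $p_0=p_1=2$ and the weighted measure $\frac{R-1}{2\pi}|dz|$: then $M_0=(R-1)^{1/2}$ carries the $R$-dependence while $M_1\le c(\delta)^{1/2}$ is uniform in $\lambda$ (an area estimate over the thin annulus plus conformality), and it is precisely this imbalance of endpoints that yields both $(R-1)^{-2k^2/(1+k^2)}$ and the constant in the indispensable form $C(\delta)^{k^2}$.

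Second, the identification of the linear term is not what you claim. For the antisymmetrised coefficient, $\frac{d}{d\lambda}\big|_{\lambda=0}f_\lambda'$ is the Beurling transform of the \emph{reflected} coefficient, not $\mathcal S\mu$, and the normalised solutions expand through $z f_\lambda'/f_\lambda=1+\lambda\, z\,\mathcal S(\mu/z)+\dots$, i.e.\ through $\mathcal S(\mu/z)$ rather than $\mathcal S\mu$. On the unit circle the antisymmetric construction only recovers $2i\im\bigl[z\,\mathcal S\mu(z)\bigr]$; the paper therefore works with $\nu(z)=z\mu(z)$ and its antisymmetric reflection, and runs the argument a second time with $i\mu$ to capture $\re\bigl[z\,\mathcal S\mu(z)\bigr]$ (Lemma \ref{intmeans3} and Corollary \ref{means4}) -- a step absent from your outline. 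Also, averaging over the circle $|\lambda|=k$ to kill cross terms is not available: Corollary \ref{le:interpolation} only gives a bound at the real parameter $k$, and replacing $\mu$ by $e^{i\theta}\mu$ destroys antisymmetry; the paper instead kills the cross term identically via the Fubini--Cauchy identity $\int_{\mathbb S^1}z\,(\mathcal S h)(z)\,|dz|=0$ for $h$ vanishing near the circle. (Likewise your non-vanishing justification ``$f_\lambda$ is conformal in $\DD^*$'' fails for the antisymmetrised coefficient; the paper invokes \cite[Theorem 5.7.2]{AIMb} for $zf_\lambda'/f_\lambda$.) Without these corrections the scheme does not deliver \eqref{growth1}.
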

 
 The growth rate in \eqref{growth1} is interesting only for $R$ close to $1$: For $|z| = R > 1$,  we always  have the pointwise bound
 \begin{equation} \label{growth13}   
 |{\mathcal S} \mu (z)| =  \left| \frac{1}{\pi} \int_{\DD}  \frac{\mu(\zeta)}{(\zeta - z)^2} dm(\zeta) \right| \leq  \frac{1}{(R-1)^2}.
 \end{equation}
  
\noindent  It is clear that Theorem \ref{upperthm} implies  $\Sigma^2 \leqslant 1$, i.e.~the statement from Theorem \ref{thm:upperbound-intro} that
\begin{equation} \label{goal4} 
 \sigma^2(\mathcal{S}\mu) = \frac{1}{2\pi}\limsup_{R\to1^+}\,  \frac{1}{|\log(R-1)|} \int_{0}^{2 \pi} |\mathcal{S} \mu (Re^{i \theta})|^2 \; d\theta \;  \leqslant \;   1
\end{equation}
whenever $|\mu| \leqslant \chi_{\mathbb{D}}$. 

The proof of Theorem \ref{growth1} is based on holomorphic motions and quasiconformal distortion estimates. In particular, we 
 make strong use of the ideas of Smirnov \cite{smirnov}, where he showed that the dimension of a $k$-quasicircle is at most $1+k^2$. We first  need a few preliminary results.

\subsection{Normalised solutions}  
The classical Cauchy transform of a function $\omega \in L^p(\C)$ is given by
\begin{equation} \label{cauch}
\mathcal{C}\omega(z)= \frac{1}{\pi} \int_{\C} \frac{\omega(\zeta)}{z-\zeta}dm(\zeta).
\end{equation}
For us it will be convenient to use a modified version
\begin{eqnarray}
\label{c}
 {\mathcal C}_1 \omega (z) &:= & \frac{1}{\pi} \int_{\C} \omega(\zeta) \left[\frac{1}{z-\zeta} - \frac{1}{1-\zeta}  \right] dm(\zeta) \\  
& = & (1-z)  \frac{1}{\pi} \int_{\C} \omega(\zeta) \frac{1}{(z-\zeta)(1-\zeta)}  dm(\zeta) \nonumber
\end{eqnarray}
defined pointwise for compactly supported functions $\omega \in L^p(\C)$, $p > 2$. Like the usual Cauchy transform, 
the modified Cauchy transform satisfies the identities $\overline \partial ( {\mathcal C}_1 \omega ) =  \omega$
and  $ \partial ( {\mathcal C}_1 \omega ) ={\mathcal S}  \omega$. Furthermore, ${\mathcal C}_1 \omega $ is  continuous, vanishes at $z=1$ and has  the asymptotics  
$${\mathcal C}_1 \omega (z) =  -\frac{1}{\pi} \int_{\C}  \frac{\omega(\zeta)}{1- \zeta} dm(\zeta) + {\mathcal O}(1/z) \quad \mbox{ as } \; z \to \infty.$$

We will consider quasiconformal mappings with Beltrami coefficient  $\mu$ supported on unions of annuli
$$A(\rho,R) := \{z \in \C : \rho < |z| < R \}.$$
Typically, we need to make sure that the support of the Beltrami coefficient is symmetric with respect to the reflection in the unit circle. Therefore, it is
convenient to use the notation
\begin{eqnarray}
A_R  & := & A(1/R,R), \qquad 1 < R < \infty \qquad \mbox{and} \\
A_{\rho,R} & := & A(1/R,1/\rho) \cup A(\rho,R), \qquad 1 < \rho < R < \infty. \label{kaksi}
\end{eqnarray}

 For coefficients supported on annuli $A_R$, the  normalised homeomorphic solutions to the 
  Beltrami equation
\begin{equation}
\label{b}
\overline \partial f(z) = \mu(z) \partial f(z) \quad \mbox{for a.e.~} z \in \C, \qquad f(0)=0, \; f(1)= 1,
\end{equation}
admit a simple representation:
\begin{proposition}\label{rep}
Suppose $\mu$ is supported on ${A_R}$ with $\| \mu \|_{\infty} < 1$ and $f \in W^{1,2}_{loc}(\C)$ is the normalised  homeomorphic solution to \eqref{b}. Then 
\begin{equation}
\label{c1}
 f(z) = z \exp( {\mathcal C}_1 \omega (z) ), \qquad z \in \C,
\end{equation}
where $\omega \in L^p(\C)$  for some $p > 2$, has support contained in $A_R$ and 
\begin{equation}
\label{beltr4} \omega(z) - \mu(z){\mathcal S}  \omega (z) = \frac{\mu(z)}{z} \quad \mbox{for a.e.~} z \in \C.
\end{equation}
\end{proposition}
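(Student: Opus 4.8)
The plan is to verify the ansatz \eqref{c1} directly by checking that the right-hand side is a homeomorphic $W^{1,2}_{\loc}(\C)$-solution of the Beltrami equation \eqref{b} with the correct normalisation, and then invoke uniqueness of normalised solutions. So first I would look for a solution of the form $f(z) = z\exp(h(z))$ with $h$ to be determined; plugging this into $\overline\partial f = \mu\,\partial f$ and using $\partial f = f\,(1/z + \partial h)$, $\overline\partial f = f\,\overline\partial h$ (valid wherever $f\neq 0$, which holds off the origin), one obtains the equation $\overline\partial h = \mu\,(1/z + \partial h)$. The natural way to solve this is to set $h = \mathcal C_1\omega$ for an unknown density $\omega$, since then $\overline\partial h = \omega$ and $\partial h = \mathcal S\omega$ by the properties of $\mathcal C_1$ recalled just before the proposition; the equation for $h$ then becomes exactly the integral equation \eqref{beltr4}, namely $\omega - \mu\,\mathcal S\omega = \mu/z$.

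The second step is to solve \eqref{beltr4}. Since $\|\mu\|_\infty < 1$ and $\mathcal S$ is an isometry on $L^2(\C)$, the operator $I - \mu\mathcal S$ is invertible on $L^2$, so there is a unique $\omega \in L^2(\C)$ solving \eqref{beltr4}; moreover $\mu/z$ is supported in $A_R$ and bounded, and one checks (standard Neumann-series/Astala--Iwaniec--Martin argument, using that $\mathcal S$ is bounded on $L^p$ for $p$ near $2$ with norm close to $1$) that in fact $\omega \in L^p(\C)$ for some $p > 2$. The support statement follows because $\mu$ is supported in $A_R$: outside $A_R$ the equation reads $\omega = 0$ there (the term $\mu/z$ vanishes and $\mu\,\mathcal S\omega$ vanishes), so $\omega$ is supported in $A_R$ as claimed. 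With $\omega \in L^p$, $p>2$, compactly supported, $\mathcal C_1\omega$ is well-defined, continuous, and vanishes at $z=1$, so $f(z) = z\exp(\mathcal C_1\omega(z))$ is continuous, satisfies $f(0)=0$ and $f(1)=1$, and lies in $W^{1,2}_{\loc}$.

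The third step is to confirm that this $f$ is a homeomorphism and that its complex dilatation is $\mu$ a.e. Reversing the computation of step one: on $\C\setminus\{0\}$ we have $\overline\partial f = f\,\overline\partial h = f\,\omega$ and $\partial f = f\,(1/z + \mathcal S\omega)$, and by \eqref{beltr4}, $f\,\omega = f\,\mu\,(1/z + \mathcal S\omega) = \mu\,\partial f$; at $z=0$ the dilatation is irrelevant since $\mu(0)=0$. Thus $f$ solves \eqref{b} and $\mu_f = \mu$ a.e., so $\|\mu_f\|_\infty = \|\mu\|_\infty < 1$, and by the measurable Riemann mapping theorem $f$ (suitably interpreted near $\infty$, using the asymptotics $\mathcal C_1\omega(z) = \const + \mathcal O(1/z)$ so that $f(z) \sim c\,z$ at infinity with $c\neq 0$) is a quasiconformal homeomorphism of $\C$. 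Finally, uniqueness: any two $W^{1,2}_{\loc}$ homeomorphic solutions of the same Beltrami equation with the same normalisation $f(0)=0,\ f(1)=1$ differ by a conformal automorphism of $\C$ fixing $0$ and $1$, hence coincide; therefore the solution $f$ of \eqref{b} is exactly the one given by \eqref{c1}.

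The main obstacle is the $L^p$-regularity with $p>2$ (step two) together with the justification that the formal computation of $\overline\partial$ and $\partial$ of $z\exp(\mathcal C_1\omega(z))$ is valid in the distributional sense on all of $\C$ — i.e. that no spurious mass appears at the origin where $1/z$ is singular and $\mathcal C_1$ interacts with it. This is handled by the standard Astala--Iwaniec--Martin machinery (the Riesz--Thorin/Calder\'on--Zygmund bound $\|\mathcal S\|_{L^p\to L^p}\to 1$ as $p\to2$), which the paper cites via \cite{A}--\cite{AIPS2}; the singularity at $0$ is harmless because $\mu$ vanishes in a neighbourhood of $0$ (support in $A_R$), so $\mu/z$ is bounded and the product rule for $\overline\partial,\partial$ applies without boundary terms.
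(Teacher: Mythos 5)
Your proposal is correct and follows essentially the same route as the paper: solve the integral equation \eqref{beltr4} by a Neumann series in $L^p$ for some $p>2$ (whence $\supp\omega\subset A_R$), verify that $z\exp(\mathcal C_1\omega(z))$ is a $W^{1,2}_{\loc}$-solution of \eqref{b} with the right normalisation, use the asymptotics $f(z)\sim \alpha z$ at infinity to see it is a homeomorphism, and conclude by uniqueness of normalised solutions. The only cosmetic difference is that the paper phrases the homeomorphism step as ``$f$ is a similarity composed with a principal solution, and principal solutions are homeomorphisms,'' which is the precise version of your appeal to the measurable Riemann mapping theorem plus the behaviour at infinity.
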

\begin{proof}
First, if $\omega$ satisfies the above equation, then
 $$ \omega = (Id - \mu{\mathcal S})^{-1} \left( \frac{\mu(z)}{z} \right) =  \frac{\mu(z)}{z} +  \mu{\mathcal S}\left(\frac{\mu(z)}{z}\right) +  \mu{\mathcal S}\mu{\mathcal S}\left(\frac{\mu(z)}{z}\right) + \cdots
 $$
 with the series converging in $L^p(\C)$ whenever $\| \mu \|_{\infty} \|Ê\mathcal S \|_{L^p} < 1$, in particular for some $p>2$. The solution, unique in $L^p(\C)$,  clearly has support contained in $A_R$.

 If $f(z)$ is as in \eqref{c1}, then  $f \in W^{1,2}_{loc}(\C)$ and satisfies \eqref{b} with the required normalisation. To see that $f$ is a homeomorphism, note that 
 \begin{equation}
\label{pr1}
f(z) = \alpha [z + \beta   + {\mathcal O}(1/z)] \qquad \mbox{  as } \; z\to \infty,
\end{equation}
 where 
 \begin{equation}
\label{pr2}
 \alpha = \exp\left(-  \frac{1}{\pi} \int_{\C}  \frac{\omega(\zeta)}{1-\zeta} dm(\zeta) \right) \neq 0 \quad \mbox{and} \quad \beta =  \frac{1}{\pi} \int_{\C} \omega(\zeta) dm(\zeta)
\end{equation}
which shows that $f$ is a composition of a similarity and a  principal solution to the Beltrami equation. Since every principal solution to a Beltrami equation is automatically a homeomorphism \cite[p.169]{AIMb}, we see that $f$ must be a homeomorphism as well. 
The proposition now follows from the uniqueness of normalised homeomorphic solutions to \eqref{b}. 
\end{proof}

\subsection{Antisymmetric mappings}

If the Beltrami coefficient in  \eqref{b} satisfies $\overline{\mu(z)} = \mu(\overline z )$, then by the uniqueness of the normalised solutions,
we have $\overline{f(z)} = f(\overline z )$ and $f$ preserves the real axis.

For normalised solutions preserving the unit circle, the corresponding condition for $f$ is
$f(1/\overline z ) = 1/{\overline{f(z)}}$ which asks for the Beltrami coefficient to satisfy
$\mu(\frac{1}{\, \overline z \,} ) \frac{\overline z^2}{z^2} = {\overline{\mu(z)}}$ for a.e.~$z \in \C.$
In this case, we say that the Beltrami coefficient $\mu$ is {\it symmetric} (with respect to the unit circle).
Following \cite{smirnov}, we say that $\mu$ is {\it antisymmetric} if 
\begin{equation}
\label{belt}
\mu\left(\frac{1}{\, \overline z \,} \right) \frac{\overline z^2}{z^2} \; = \;   - \, {\overline{\mu(z)}} \qquad \mbox{for a.e.~} z \in \C.
 \end{equation}

\medskip

Given an antisymmetric $\mu$ supported on $A_R$ with $ \| \mu \|_{\infty} = 1 $, define 
$$ \mu_{\lambda}(z)  = \lambda \, \mu(z), \quad \lambda \in \DD, 
$$
and let $f_{\lambda}$ be the corresponding normalised homeomorphic solution to \eqref{b} with $\mu = \mu_{\lambda}$. 
It turns out that in case of  mappings  antisymmetric with respect to the circle, the expression 
$$  \Phi_\lambda(z): = z \,\frac{  \, \partial f_\lambda(z)\, }{f_\lambda(z)}
$$
has the proper invariance properties similar to those used in \cite{smirnov}:

\begin{proposition} \label{apupropo}
For every $\lambda \in \DD$ and $z \in \C$,
$$\frac{1}{\overline z} \,  \frac{ \, \partial f_\lambda(1/\overline z ) \, }{f_\lambda(1/\overline z)}  = 
\overline{\left[z  \frac{ \, \partial f_{(- \overline \lambda \,)}(z)\, }{f_{(-\overline{\lambda}\,)}(z)} \right]}.
$$
In particular, 
$$\left|  \frac{ \, \partial f_\lambda(z) \, }{f_\lambda(z)} \right|  = \left|  \frac{ \, \partial f_{ (- \overline \lambda \,) }(z) \, }{f_{(- \overline \lambda \,)}(z)} \right| \qquad \mbox{whenever } \;  |z| = 1.
$$ 
\end{proposition}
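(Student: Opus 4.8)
The plan is to exploit the uniqueness of the normalised homeomorphic solution to the Beltrami equation \eqref{b} together with the antisymmetry \eqref{belt} of $\mu$. First I would introduce the ``circle reflection'' map $\iota(z) = 1/\overline{z}$ and consider, for a fixed $\lambda \in \DD$, the composed map $g_\lambda(z) := \iota \circ f_\lambda \circ \iota(z) = 1/\overline{f_\lambda(1/\overline z)}$. Since $\iota$ is an orientation-reversing anticonformal involution fixing $0 \mapsto \infty$, $1 \mapsto 1$, $\infty \mapsto 0$, the map $g_\lambda$ is again a $|\lambda|$-quasiconformal homeomorphism of $\widehat{\C}$, and a direct computation of how $\overline\partial$ and $\partial$ transform under pre- and post-composition with $\iota$ shows that the Beltrami coefficient of $g_\lambda$ is
\[
\mu_{g_\lambda}(z) \;=\; \overline{\mu_{f_\lambda}\!\left(\frac{1}{\overline z}\right)}\,\frac{z^2}{\overline z^{\,2}} \;=\; \overline{\lambda}\,\overline{\mu\!\left(\frac{1}{\overline z}\right)}\,\frac{z^2}{\overline z^{\,2}} \;=\; -\,\overline\lambda\,\mu(z) \;=\; \mu_{(-\overline\lambda)}(z),
\]
where the third equality is exactly the antisymmetry condition \eqref{belt}. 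Moreover $g_\lambda$ is supported on $A_R$ (as $\iota$ preserves $A_R$) and, because $\iota$ fixes $0$ and $1$, $g_\lambda$ satisfies the normalisation $g_\lambda(0) = 0$, $g_\lambda(1) = 1$.

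Next, by the uniqueness of normalised homeomorphic solutions to \eqref{b} (the same fact invoked at the end of the proof of Proposition \ref{rep}), applied to the coefficient $\mu_{(-\overline\lambda)}$, I conclude
\[
f_{(-\overline\lambda)}(z) \;=\; g_\lambda(z) \;=\; \frac{1}{\,\overline{f_\lambda(1/\overline z)}\,} \qquad \text{for all } z \in \C.
\]
Then I would differentiate this identity. Writing $f_\lambda = f$ for brevity and using $\partial_z \overline{h(1/\overline z)} = \overline{\partial h(1/\overline z)} \cdot (-1/\overline z^{\,2})$ together with the chain rule for $\overline\partial$ — or more cleanly, taking $\partial_z$ of $\log f_{(-\overline\lambda)}(z) = -\log\overline{f(1/\overline z)} = -\overline{\log f(1/\overline z)}$ — one gets
\[
\frac{\partial f_{(-\overline\lambda)}(z)}{f_{(-\overline\lambda)}(z)} \;=\; \overline{\left[\frac{\partial f(1/\overline z)}{f(1/\overline z)}\right]} \cdot \frac{1}{\overline z^{\,2}}.
\]
Multiplying both sides by $z$ and rearranging (using $z \cdot \overline z^{\,-2} = \overline{(1/\overline z)}/\overline z = \overline{(1/\overline z \cdot \overline z^{-1})}$; more simply $z/\overline z^{\,2} = \overline{1/(\overline z\, z)}\cdot \overline z^{-1}\cdot z$... ) — the cleanest bookkeeping is to note $z \cdot \dfrac{1}{\overline z^{\,2}} = \overline{\left(\dfrac{1}{\overline z}\right)} \cdot \dfrac{1}{\overline z}$, so that
\[
z\,\frac{\partial f_{(-\overline\lambda)}(z)}{f_{(-\overline\lambda)}(z)} \;=\; \overline{\left[\frac{1}{\overline z}\,\frac{\partial f(1/\overline z)}{f(1/\overline z)}\right]},
\]
which is precisely the claimed identity after relabelling. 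Setting $|z| = 1$, so that $1/\overline z = z$, and taking absolute values yields $|\Phi_\lambda(z)| = |\Phi_{(-\overline\lambda)}(z)|$ on the unit circle.

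The main obstacle I anticipate is purely the careful manipulation of the Wirtinger derivatives $\partial$ and $\overline\partial$ under the anticonformal substitution $z \mapsto 1/\overline z$ and the conjugation $h \mapsto \overline{h}$: one must track which derivative an anticonformal change of variable swaps, and verify that the formal computation is justified a.e.\ for $W^{1,2}_{\loc}$ maps rather than only for smooth ones (this is standard for quasiconformal maps but should be stated). A secondary point requiring a line of justification is that $g_\lambda$ really is the normalised \emph{homeomorphic} $W^{1,2}_{\loc}$ solution — it is a homeomorphism of $\widehat{\C}$ fixing $0,1,\infty$ by construction, and Proposition \ref{rep} (or the general theory) guarantees such a solution is unique, so the identification $f_{(-\overline\lambda)} = g_\lambda$ is legitimate. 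Everything else is bookkeeping.
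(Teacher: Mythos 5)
Your argument is exactly the paper's proof: set $g_\lambda=\iota\circ f_\lambda\circ\iota$ with $\iota(z)=1/\overline z$, use the antisymmetry \eqref{belt} to see that its dilatation is $-\overline\lambda\,\mu$, identify $g_\lambda=f_{(-\overline\lambda)}$ by uniqueness of normalised solutions, and differentiate; the structure and all the essential points (normalisation at $0,1,\infty$, symmetry of the support, uniqueness) are right. The only flaw is in the final bookkeeping: the chain rule for $z\mapsto\overline{h(1/\overline z)}$ produces the prefactor $1/z^{2}$, not $1/\overline z^{\,2}$, i.e.
\[
\frac{\partial f_{(-\overline\lambda)}(z)}{f_{(-\overline\lambda)}(z)}
=\frac{1}{z^{2}}\,\overline{\left[\frac{\partial f_\lambda(1/\overline z)}{f_\lambda(1/\overline z)}\right]}
\]
(test $\lambda=0$, $f_0=\mathrm{id}$, to see that $1/\overline z^{\,2}$ cannot be right; the same prefactor slip appears in the paper's own intermediate display, but the statement of the Proposition is the one consistent with $1/z^{2}$). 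The identity you then invoke to rearrange, $z\cdot\overline z^{\,-2}=\overline{(1/\overline z)}\cdot\overline z^{\,-1}$, is false: the left side is $z/\overline z^{\,2}$ while the right side is $1/|z|^{2}$. With the correct prefactor no patch is needed, since $z\cdot z^{-2}=1/z=\overline{(1/\overline z)}$ gives the displayed identity of the Proposition at once; and in any case the ``in particular'' statement on $|z|=1$ is unaffected, because there both candidate prefactors have modulus one.
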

\begin{proof} Let 
 \begin{equation}
\label{identity2}
 g_\lambda(z) = \frac{1}{\; \overline{f_\lambda(1/\overline z)} \; }, \qquad z \in \C. 
\end{equation}
By direct calculation, $g_\lambda$ has complex dilatation $\overline{\, \lambda \mu(\frac{1}{\, \overline z \,} ) \frac{\overline z^2}{z^2}\, }$ 
 which by our assumption on antisymmetry is equal to $ - \overline{\lambda} \mu(z)$. Since $g_\lambda$ and $f_{- \overline{\lambda}}$ are normalised solutions to the same Beltrami equation, the functions must be identical.
 Differentiating the identity \eqref{identity2} with respect to $\partial/\partial z$, we get
 $$  
 \partial f_{(- \overline{\lambda})} (z) = \frac{1}{\overline z^2} \;  \frac{ \, \overline{ \partial f_{ \lambda}(1/\overline z) }\, }{ \; \overline{ f_{\lambda}(1/\overline z)^2 } \, } =  f_{(- \overline{\lambda})} (z) \frac{1}{\overline z^2} \;  \frac{ \, \overline{ \partial f_{ \lambda}(1/\overline z) }\, }{ \; \overline{ f_{\lambda}(1/\overline z) } \, }.
 $$
Rearranging and taking the complex conjugate gives the claim.
\end{proof}

\subsection{Integral means for antisymmetric mappings}

 For $1 < R < 2$, consider a quasiconformal mapping $f$ whose Beltrami coefficient is supported on $A_{R,2}$. 
 Since $f$ is conformal in the narrow annulus $\{ \frac{1}{R} < |z| < R\}$, it is reasonable  to study bounds for the integral means involving the derivatives of $f$ on the unit circle. We are especially interested in the dependence of these bounds on $R$ as $R \to 1^+$.

 \begin{theorem} \label{intmeans11}
 Suppose $\mu$ is  measurable,  $|\mu(z)| \leq (1-\delta) \chi_{A_{R,2}}(z)$, and that $\mu$ is antisymmetric. Let $0 \leq k \leq 1$.
 
  If  $f = f_k \in W^{1,2}_{loc}(\C)$ is the normalised homeomorphic solution to  $\overline \partial f(z) = k \mu(z) \partial f(z)$, then 
 \begin{equation}
 \label{eq:intmeans11}
  \frac{1}{2\pi} \int_{|z| = 1}  \left|  \frac{ \,  f'(z) \, }{f(z)} \right|^2 |dz|  \; \leq \, C(\delta)^{k^2} \, (R-1)^{-\frac{2k^2}{1+k^2}},
 \end{equation}
  where $C(\delta) < \infty$ is a constant depending only on $\delta$.
 \end{theorem}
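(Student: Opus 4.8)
The plan is to realise $f_k$ as the $t = k$ member of a holomorphic motion and apply the antisymmetric interpolation result of Corollary \ref{le:interpolation} to the family $\Phi_\lambda(z) := z\,\partial f_\lambda(z)/f_\lambda(z)$, restricted to the unit circle. First I would set up the holomorphic family: for $\lambda \in \DD$, let $\mu_\lambda = \lambda \mu / (1-\delta)$ (rescaled so that $\|\mu_\lambda\|_\infty < 1$ with room to spare — actually one wants $\mu_\lambda = \lambda\mu$ directly, since $|\mu| \le (1-\delta)\chi$ already, so $\|\mu_\lambda\|_\infty \le (1-\delta)|\lambda| < 1$), and let $f_\lambda$ be the normalised homeomorphic solution from Proposition \ref{rep}; this is supported on $A_{R,2}$, so Proposition \ref{rep} gives $f_\lambda = z\exp(\mathcal C_1\omega_\lambda)$ and in particular $\Phi_\lambda$ is analytic and non-vanishing in $\lambda$ (the Neumann series for $\omega_\lambda$ converges locally uniformly in $\lambda$, and $\partial f_\lambda / f_\lambda = 1/z + \mathcal S\omega_\lambda$ never vanishes since $f_\lambda$ is a homeomorphism). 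By Proposition \ref{apupropo}, $|\Phi_\lambda(z)| = |\Phi_{-\overline\lambda}(z)|$ for $|z| = 1$, which is exactly the antisymmetry hypothesis \eqref{eq:Phi-antisym} needed for Corollary \ref{le:interpolation}, with $\Omega$ the unit circle and $\sigma$ the measure $|dz|/2\pi$.

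Next I would choose the interpolation exponents. We want $L^{p_k}$ with $p_k = 2$ on the left, so I would take $p_0 = \infty$ (the "conformal" endpoint $\lambda = 0$, where $f_0(z) = z$, so $\Phi_0 \equiv 1$ and $\|\Phi_0\|_\infty = 1$) and solve $\tfrac12 = \tfrac{1}{p_k} = \tfrac{2k^2}{1+k^2}\cdot\tfrac{1}{p_1}$, giving $p_1 = \tfrac{4k^2}{1+k^2}$. Then Corollary \ref{le:interpolation} yields
\[
\left(\frac{1}{2\pi}\int_{|z|=1}|\Phi_k(z)|^2\,|dz|\right)^{1/2} \;\le\; 1^{\frac{1-k^2}{1+k^2}}\cdot\left(\sup_{|\lambda|<1}\|\Phi_\lambda\|_{p_1}\right)^{\frac{2k^2}{1+k^2}},
\]
so the whole theorem reduces to the uniform bound
\[
\sup_{|\lambda|<1}\ \frac{1}{2\pi}\int_{|z|=1}\left|\frac{\partial f_\lambda(z)}{f_\lambda(z)}\right|^{p_1}|dz| \;\le\; \frac{C(\delta)}{(R-1)}
\]
with a constant $C(\delta)$ independent of $k$ and of $\lambda$; raising to the power $\tfrac{2k^2}{1+k^2}$ then produces precisely the right-hand side $C(\delta)^{k^2}(R-1)^{-2k^2/(1+k^2)}$ (noting $\tfrac{2k^2}{1+k^2} = \tfrac{k^2}{1}\cdot\tfrac{2}{1+k^2} \le k^2$ in the constant, which is the point of having the multiplicative constant of the form $C(\delta)^{k^2}$, cf.\ the remark after \eqref{neumann}). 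A subtlety: $p_1 = 4k^2/(1+k^2) < 2$ for $k < 1$, so the target is an $L^{p_1}$ bound with a \emph{small} exponent, which is favourable — one should double-check the degenerate case $k$ small where $p_1 \to 0$, but there $\|\Phi_\lambda\|_{p_1} \to \exp(\frac{1}{2\pi}\int \log|\Phi_\lambda|)$ stays controlled by the BMO/Bloch bound on $\log(\partial f_\lambda/f_\lambda)$, so the estimate is uniform.

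The main obstacle is establishing that uniform $L^{p_1}$ bound on the circle with the sharp $(R-1)^{-1}$ blow-up and a $\delta$-dependent-only constant. Here is where I expect to lean on Smirnov-type reasoning \cite{smirnov} together with the fact that $\mu$ is supported on $A_{R,2}$ and $f_\lambda$ is conformal on the thin annulus $\{1/R < |z| < R\}$. The idea: $g_\lambda := \log(\partial f_\lambda/f_\lambda)$ is analytic on $\{1/R<|z|<R\}$ with a Bloch-type bound $|g_\lambda'(z)|\cdot(\text{dist to }\partial) \le c(\delta)$ coming from the distortion theory of \cite{AIMb} (the derivative of a quasiconformal map with $\|\mu\|_\infty \le 1-\delta$ satisfies Hölder/Bloch estimates with constants depending only on $\delta$); on the unit circle the distance to the boundary of the conformality annulus is $\asymp R-1$, so $|g_\lambda(z)| \lesssim c(\delta)\log\frac{1}{R-1}$ pointwise plus a bounded contribution from a basepoint, whence $|\Phi_\lambda(z)|^{p_1} = e^{p_1\,\re g_\lambda(z)} \lesssim (R-1)^{-c(\delta)p_1}$. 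To get the sharp exponent $(R-1)^{-1}$ rather than $(R-1)^{-c(\delta)}$ one cannot afford a crude pointwise bound; instead I would integrate over the circle and use a subordination/Prawitz-type inequality (or the $\tau = 2$ integral-means estimate \eqref{eq:prause-smirnov} adapted to the normalised antisymmetric solution and the thin annulus), exploiting that $\int_{|z|=1}|\partial f_\lambda/f_\lambda|^2$ is controlled by an area integral of $|\partial f_\lambda|^2$ over a neighbourhood of the circle — a quantity comparable to the area of $f_\lambda$ of that neighbourhood, which is $O(1)$, yielding the $(R-1)^{-1}$ only when one uses the $L^{p_1}$ with the small $p_1$ and interpolates back. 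Getting all constants to depend only on $\delta$ — in particular uniformly in $\lambda \in \DD$, which is where $|\mu_\lambda|\le(1-\delta)$ rather than $<1$ is essential — is the delicate bookkeeping, but it is exactly the kind of estimate the quasiconformal machinery of \cite{AIMb, smirnov, ptut} is designed to supply.
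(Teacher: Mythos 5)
Your setup --- the holomorphic motion $\mu_\lambda = \lambda\mu$, the family $\Phi_\lambda(z) = z\,\partial f_\lambda(z)/f_\lambda(z)$ on $\mathbb{S}^1$, and the antisymmetry $|\Phi_\lambda| = |\Phi_{-\overline{\lambda}}|$ from Proposition \ref{apupropo} --- is exactly the paper's. The gap is in the interpolation step. With your endpoints $p_0=\infty$ (so $M_0 = \|\Phi_0\|_\infty = 1$) and $p_1 = \tfrac{4k^2}{1+k^2}$, Corollary \ref{le:interpolation} bounds $\|\Phi_k\|_2$ by $\bigl(\sup_{|\lambda|<1}\|\Phi_\lambda\|_{p_1}\bigr)^{2k^2/(1+k^2)}$, where $\|\cdot\|_{p_1}$ is the \emph{norm}, not the integral of the $p_1$-th power. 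If the uniform input is the one you reduce to, namely $\tfrac{1}{2\pi}\int_{|z|=1}|\Phi_\lambda|^{p_1}|dz| \le C(\delta)/(R-1)$, then $\|\Phi_\lambda\|_{p_1} \le \bigl(C(\delta)/(R-1)\bigr)^{(1+k^2)/(4k^2)}$, and raising this to the power $\tfrac{2k^2}{1+k^2}$ gives exactly $\bigl(C(\delta)/(R-1)\bigr)^{1/2}$; squaring, you only conclude $\tfrac{1}{2\pi}\int_{|z|=1}|\Phi_k|^2|dz| \le C(\delta)/(R-1)$. The $k^2$ in the exponent cancels out completely: this is the trivial $L^2$ bound, which neither gives \eqref{eq:intmeans11} nor is of any use in Lemma \ref{intmeans3}, where everything hinges on the right-hand side being $1+\mathcal{O}(k^2)$ as $k\to 0$. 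What your scheme actually requires at the $p_1$-endpoint is $\sup_\lambda\|\Phi_\lambda\|_{p_1} \le \bigl(C(\delta)/(R-1)\bigr)^{1/2}$, i.e.\ an $L^{p_1}$-integral bound of order $\bigl(C(\delta)/(R-1)\bigr)^{p_1/2}$, a genuinely stronger statement than your stated target (it does follow, via Jensen on the probability space $(\mathbb{S}^1, |dz|/2\pi)$ since $p_1<2$, from a uniform circle bound $\tfrac{1}{2\pi}\int|\Phi_\lambda|^2|dz| \le c(\delta)/(R-1)$ --- but you never isolate that as the statement to prove).

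The paper avoids this bookkeeping trap by keeping $p_0=p_1=2$ and putting the $(R-1)$-gain into the \emph{measure}: it applies Corollary \ref{le:interpolation} on $\Omega = \mathbb{S}^1$ with $d\sigma = \tfrac{R-1}{2\pi}|dz|$, so that $M_0 = \|\Phi_0\|_{L^2(\sigma)} = (R-1)^{1/2}$, raised to the power $\tfrac{1-k^2}{1+k^2}$, is what produces the exponent $-\tfrac{2k^2}{1+k^2}$, while the uniform endpoint is simply $M_1 \le c(\delta)^{1/2}$. That uniform bound is then proved in the elementary way you gesture at in your last paragraph: $|f_\lambda(z)| \ge 1/\rho_\delta$ on $A_R$ and $f_\lambda(A_R)\subset B(0,\rho_\delta)$ give $\int_{A_R}|f_\lambda'/f_\lambda|^2\,dm = \mathcal{O}_{\delta}(1)$, and since $f_\lambda'/f_\lambda$ is holomorphic in the annulus $\{1/R<|z|<R\}$ where the dilatation vanishes, subharmonicity of $|f_\lambda'/f_\lambda|^2$ (mean value over disks of radius comparable to $R-1$ centred on $\mathbb{S}^1$, plus Fubini) converts the area bound into $(R-1)\cdot\tfrac{1}{2\pi}\int_{|z|=1}|f_\lambda'/f_\lambda|^2|dz| \le c(\delta)$; no pointwise Bloch estimates, Prawitz-type inequalities or appeal to \eqref{eq:prause-smirnov} are needed. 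So your overall strategy is the right one, but as written the reduction does not prove the theorem; correct it either by switching to the weighted measure as in the paper, or by replacing your target uniform bound with the $(R-1)^{-p_1/2}$ version and deriving it from the uniform $L^2$ circle bound.
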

  \bigskip
 
 \noindent The assumption \, $\|\mu(z)\|_{\infty} \leq 1-\delta$  \, above, where $\delta > 0$ is fixed but arbitrary, is made to guarantee that we have
  uniform bounds in (\ref{eq:intmeans11}) for all $k <1$. To estimate the asymptotic variance of the Beurling transform, we will study the nature of these  bounds as $k \to 0$,
 but we need to  keep  in mind the dependence on the auxiliary parameter $\delta > 0$.  
 
  \bigskip
  
\begin{proof}[Proof of Theorem \ref{intmeans11}]
  We embed $f$ in a holomorphic motion by setting
  \begin{equation*}
 \mu_\lambda(z) = \lambda \; \mu(z), \qquad \lambda \in \DD.
 \end{equation*}
 Let $f_\lambda$ denote the normalised solution to the Beltrami equation $f_{\overline z} = \mu_\lambda f_z$, with the representation \eqref{c1} described in  Proposition \ref{rep}.  The uniqueness of the solution implies that $f_k=f$.  
 
We now apply Corollary \ref{le:interpolation} to the family
   \begin{equation}
\label{perhe}
 \Phi_\lambda(z): = z \frac{ \, (f_\lambda)'(z) \, }{f_\lambda(z)}, \qquad \lambda \in \DD, \, z \in \mathbb S^1.
 \end{equation}
By \cite[Theorem 5.7.2]{AIMb},  
 the map   is well-defined,  nonzero and holomorphic in $\lambda$ for each 
 $z \in \mathbb S^1$.
 By the antisymmetry of the dilatation $\mu$, we can use Proposition \ref{apupropo} to get the identity 
  \begin{equation}
\label{perhe2}
 \big|\Phi_\lambda(z)\big| = \left|  \frac{ \, \partial f_\lambda (z) \, }{f_\lambda(z)} \right|  = \left|  \frac{ \, \partial f_{ (- \overline \lambda \,) }(z) \, }{f_{(- \overline \lambda \,)}(z)} \right|  = \big| \Phi_{- \overline \lambda}(z)\big|, \qquad z \in \mathbb S^1.
 \end{equation}

 We first find a global $L^2$-bound,  independent of $ \lambda \in \DD$. 
  For this purpose, we estimate 
  $$\frac{1}{2\pi}  \int_{A_R}  \left|  \frac{ \,  f_\lambda'(z) \, }{f_\lambda(z)} \right|^2 dm(z).
  $$
 Recall  that $1 < R < 2$ by assumption. Since all $f_\lambda$'s are  normalised $\frac{1+\delta}{1-\delta}$-quasiconformal mappings, we have 
 $$ |f_\lambda(z)|  = \frac{ |f_\lambda(z) - f_\lambda(0)| }{|f_\lambda(1)  - f_\lambda(0)| }\geq 1/\rho_\delta, \qquad 1/R < |z| < R,
$$
together with
$$ f_\lambda(A_R) \subset  f_\lambda B(0,2) \subset B(0,\rho_\delta). 
$$
 Therefore,
\begin{equation}
\label{maar}
\frac{1}{2\pi} \int_{A_R}  \left|  \frac{ \,  f_\lambda'(z) \, }{f_\lambda(z)} \right|^2 dm(z) \leq \frac{1}{2\pi} \rho_\delta^2 |  f_\lambda A_R| \leq  \rho_\delta^4/2
\end{equation}
for some constant $1 <  \rho_\delta < \infty$ depending only on $\delta$. In particular,
$$ (R-1) \, \frac{1}{2\pi} \int_{|z|=1}  \left|  \frac{ \,  f_{\lambda}'(z) \, }{f_{\lambda}(z)} \right|^2 |dz| \; \leqslant c(\delta) < \infty, \qquad \lambda \in \DD,
$$
where the bound $c(\delta)$ depends only on $0 < \delta < 1$.

We now use interpolation to improve the $L^2$-bounds near the origin. We choose $p_0=p_1=2$, 
$\Omega = \mathbb{S}^1$ and  $d\sigma(z) = \frac{R-1}{2\pi}|dz|$. Applying Corollary \ref{le:interpolation} gives
$$  (R-1) \, \frac{1}{2\pi} \int_{|z|=1}  \left|  \frac{ \,  f_{k}'(z) \, }{f_{k}(z)} \right|^2 |dz| \; \leqslant  \left(R-1 \right)^{\frac{1-k^2}{1+k^2}} c(\delta)^{\frac{2k^2}{1+k^2}},
$$
which implies Theorem \ref{intmeans11}. 
\end{proof}
 
 \subsection{Integral means for the Beurling transform}\label{sec.means}

We now use infinitesimal estimates for quasiconformal distortion to give
bounds for the integral means of ${\mathcal S} \mu$.
We begin with the following lemma:

  \begin{lemma} \label{intmeans3} Given $1 < R < 2$, suppose $\mu$ is an antisymmetric Beltrami coefficient with $\supp \mu \subset A_{R,2}$ andÊ$\; \| \mu \|_{\infty} \le 1$.  Then,  $ \mu_1(z) :=   \frac{\mu(z)}{z} \; 
  $
  satisfies
  $$  \frac{1}{2\pi} \int_{|z| = 1} |{\mathcal S} \mu_1 (z)|^2 |dz| \leq (1+\delta) \log \frac{1}{(R-1)^2} +  \log C(\delta/4), \quad 0 < \delta < 1,
  $$
  where $C(\delta)$ is the constant from Theorem \ref{intmeans11}.
  \end{lemma}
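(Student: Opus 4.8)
The plan is to differentiate the estimate of Theorem~\ref{intmeans11} at dilatation zero, using Proposition~\ref{rep} as the bridge: for a Beltrami coefficient supported on an annulus, the logarithmic derivative of the normalised solution on $\mathbb{S}^1$ is controlled by $\mathcal{S}\omega$, and $\mathcal{S}\mu_1$ is the first-order term of $\mathcal{S}\omega$. Fix $\delta\in(0,1)$ and set $\delta'=\delta/4$. Since $\|\mu\|_\infty\le 1$, the coefficient $(1-\delta')\mu$ is antisymmetric, supported on $A_{R,2}$, and bounded by $(1-\delta')\chi_{A_{R,2}}$, so Theorem~\ref{intmeans11} applies to it with parameter $\delta'$. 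For $k>0$ small, let $f=f_k$ be the normalised homeomorphic solution of $\overline \partial f=k(1-\delta')\mu\,\partial f$, and put $s=k(1-\delta')$. By Proposition~\ref{rep}, $f=z\exp(\mathcal{C}_1\omega)$ with $\supp\omega\subset A_{R,2}$, $\omega\in L^p$ for some $p>2$, and $\omega-s\mu\,\mathcal{S}\omega=s\,\mu_1$. Writing $\omega/s=\mu_1+s\mu\,\mathcal{S}(\omega/s)$ and iterating (valid once $s\|\mathcal{S}\|_{L^p}<1$) gives $\|\omega/s\|_{L^p}\le 2\|\mu_1\|_{L^p}$ and $\|\omega/s-\mu_1\|_{L^p}=O(s)$. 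As $\supp\omega$ lies at distance $\ge(R-1)/2$ from $\mathbb{S}^1$, the bound $|\mathcal{S}g(z)|\le\frac{4}{\pi(R-1)^2}\|g\|_{L^1(A_{R,2})}$ holds for $|z|=1$; applied to $g=\omega/s-\mu_1$, it upgrades the $L^p$-convergence to uniform convergence $\mathcal{S}\omega/s\to\mathcal{S}\mu_1$ on $\mathbb{S}^1$ as $k\to0$.

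On $\{1/R<|z|<R\}$ the map $f$ is conformal with $f'/f=1/z+\mathcal{S}\omega$, hence $z\,f'/f=1+z\,\mathcal{S}\omega$ for $|z|=1$. The cross term in $|1+z\,\mathcal{S}\omega|^2$ integrates to $0$, since $\frac1{2\pi}\int_{|z|=1}z\,\frac{f'}{f}\,|dz|=\frac1{2\pi i}\oint_{|z|=1}\frac{f'(z)}{f(z)}\,dz$ is the winding number about $0$ of the Jordan curve $f(\mathbb{S}^1)$, namely $1$, which also equals $\frac1{2\pi}\int_{|z|=1}|dz|$; thus $\frac1{2\pi}\int_{|z|=1}z\,\mathcal{S}\omega\,|dz|=0$. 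Using $|z|=1$, this gives $\frac1{2\pi}\int_{|z|=1}|f'/f|^2|dz|=1+\frac1{2\pi}\int_{|z|=1}|\mathcal{S}\omega|^2|dz|$, and Theorem~\ref{intmeans11} applied to $(1-\delta')\mu$ at strength $k$ yields
\[
\frac1{2\pi}\int_{|z|=1}|\mathcal{S}\omega|^2|dz|\ \le\ C(\delta')^{k^2}(R-1)^{-\frac{2k^2}{1+k^2}}-1 .
\]
Dividing by $s^2=k^2(1-\delta')^2$ and letting $k\to0$, the left side tends to $\frac1{2\pi}\int_{|z|=1}|\mathcal{S}\mu_1|^2|dz|$ by the uniform convergence above, while the right side tends to $(1-\delta')^{-2}\bigl(\log C(\delta')+\log\tfrac1{(R-1)^2}\bigr)$. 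Since $(1-\delta/4)^{-2}\le1+\delta$ for $0<\delta<1$, this is at most $(1+\delta)\log\tfrac1{(R-1)^2}+(1-\delta/4)^{-2}\log C(\delta/4)$, which is the asserted bound, with a constant depending only on $\delta$.

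The only subtle step is the limit $k\to0$: one must verify that $\omega/s$ stays bounded in $L^p$ and converges to $\mu_1$ there (from the Neumann series, for $k$ small), and that this improves to uniform convergence of $\mathcal{S}\omega/s$ on $\mathbb{S}^1$. Both rely on $\supp\mu$ being separated from $\mathbb{S}^1$ by the conformal annulus $\{1/R<|z|<R\}$, the same feature that makes the cross term vanish and lets the $R$-dependence of the bound appear cleanly as $\log\frac1{(R-1)^2}$.
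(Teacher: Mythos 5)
Your proposal is correct and takes essentially the same route as the paper: differentiate Theorem \ref{intmeans11} at $k=0$ through the representation of Proposition \ref{rep}, with the cross term on $\mathbb{S}^1$ killed (you via the winding number $\frac{1}{2\pi i}\oint f'/f\,dz=1$, the paper via a Fubini--Cauchy computation showing $\int_{|z|=1} z\,\mathcal{S}h\,|dz|=0$ for $h$ vanishing near the circle), and your exact identity plus the $L^p\to$ uniform convergence of $\mathcal{S}(\omega/s)\to\mathcal{S}\mu_1$ replacing the paper's Neumann-series expansion with $\mathcal{O}(k^3)$ error. The leftover factor $(1-\delta/4)^{-2}$ on $\log C(\delta/4)$ is the same harmless slack implicit in the paper's own proof, so nothing is lost.
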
 

\begin{proof}
First, observe that if $h$ is any  $L^1$-function vanishing in the annulus $\{z: 1/R < |z| < R\}$, 
 by the theorems of Fubini and Cauchy,
    \begin{eqnarray*}
\label{nolla1}
 \frac{1}{2\pi} \int_{|z| = 1} z ({\mathcal S}h) (z) \, |dz|  &=&  \frac{1}{2 \pi i}\int_{{\mathbb S}^1} (\mathcal S h)(z) dz  \nonumber \\ &=& \frac{1}{\pi} \int_{\C} h(\zeta) \left[ \frac{1}{2 \pi i}\int_{{\mathbb S}^1} \frac{1}{(\zeta - z)^2} dz \right] dm(\zeta) = 0.
\end{eqnarray*}
To apply Theorem \ref{intmeans11}, take $0 < k < 1$ and solve the Beltrami equation $\overline \partial f(z) = k \nu(z) \partial f(z)$ for the coefficient   $\nu(z) = (1-\delta) \mu(z)$. Let $f_k \in W^{1,2}_{loc}(\C)$ be the normalised homeomorphic solution in $\C$.
 
 Recall from \eqref{c1} that  $f_k$ has the representation $f_k(z) = z \exp( {\mathcal C}_1 \omega (z) )$ 
 where
$$\omega =   (Id - k \, \nu{\mathcal S})^{-1} \left( \frac{k \,\nu(z)}{z} \right) 
 = k (1-\delta) \, \mu_1(z) + k^2 (1-\delta)^2 \,\nu{\mathcal S}\mu_1(z) + \cdots
 $$
and the series converges in $L^p(\C)$ for some fixed $p = p(\delta) > 2$.  From this representation, we see that
\begin{equation}
\label{ratio1}
z  \frac{ \,  f_k'(z) \, }{f_k(z)}  
= 1+ k (1-\delta) z {\mathcal S} \mu_1(z) + k^2 (1-\delta)^2 z {\mathcal S}\nu{\mathcal S}\mu_1(z) + \mathcal O(k^3)
\end{equation}
holds pointwise in the  annulus $\{z: 1/R < |z| < R\}$, where $\nu$ and $\omega$ vanish.
It follows that 
\begin{equation}
\label{intmeans2}
  \frac{1}{2\pi} \int_{|z| = 1}  \left| \frac{  f_k'(z)}{f_k(z)} \right|^2 |dz| = 1 + k^2  (1-\delta)^2 \frac{1}{2\pi} \int_{|z| = 1} |{\mathcal S} \mu_1 (z)|^2 |dz|+  \mathcal O(k^3).
\end{equation}
 Finally, combining \eqref{intmeans2} with Theorem \ref{intmeans11}, we obtain
 $$ 
 \hspace{-2cm}1 + k^2 (1-\delta)^2  \frac{1}{2\pi} \int_{|z| = 1} |{\mathcal S} \mu_1 (z)|^2 |dz|  +  \mathcal O(k^3)
 $$
 $$
\;  \leq \;  \exp\left(  k^2 \log C(\delta)  + \frac{k^2}{1+k^2} \log \frac{1}{(R-1)^2}\right)$$
$$ \hspace{4cm}  = 1 + k^2 \log  C(\delta)  + k^2 \log \frac{1}{(R-1)^2}  +  \mathcal O(k^4).
 $$
Taking $k \to 0$, we find that
$$\frac{1}{2\pi} \int_{|z| = 1} |{\mathcal S} \mu_1 (z)|^2 |dz|  \leq (1-\delta)^{-2} \log \frac{1}{(R-1)^2} +  (1-\delta)^{-2} \log  C(\delta).
$$
As $ (1-\delta/4)^{-2} \leq 1+\delta$,  replacing $\delta$ by $\delta/4$
proves the lemma.     \end{proof}

 \begin{corollary} \label{means4} Given $1 < R < 2$, 
  suppose $\mu$ is a Beltrami coefficient with $\supp \mu \subset A(1/2,1/R)$ and  $\| \mu \|_{\infty} \leq 1$. Then,
   $$  \frac{1}{2\pi} \int_{|z| = 1} |{\mathcal S} \mu(z)|^2 |dz| \leq (1+\delta) \log \frac{1}{(R-1)} +   \frac{1}{2} \log  C(\delta/4), \quad 0 < \delta < 1,
  $$
  where $C(\delta)$ is the constant from Theorem \ref{intmeans11}.
  \end{corollary}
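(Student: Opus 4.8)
The plan is to reduce the estimate to Lemma \ref{intmeans3} via an antisymmetrisation device. Given $\mu$ supported on $A(1/2,1/R)$ with $\|\mu\|_\infty \le 1$, I would produce an \emph{antisymmetric} Beltrami coefficient $\hat\mu$ with $\supp\hat\mu\subset A_{R,2}$ and $\|\hat\mu\|_\infty\le 1$, such that the coefficient $\hat\mu_1(z)=\hat\mu(z)/z$ of Lemma \ref{intmeans3} restricts to $\mu$ on $A(1/2,1/R)$. Concretely, set $\hat\mu(z)=z\,\mu(z)$ for $z\in A(1/2,1/R)$ — here $|z|<1$, so $|\hat\mu(z)|<|\mu(z)|\le 1$ — and extend $\hat\mu$ to $A(R,2)$ by imposing the antisymmetry relation \eqref{belt}; this forces
\[ \hat\mu(w)\;=\;-\,\frac{w}{\bar w^{2}}\,\overline{\mu(1/\bar w)}, \qquad R<|w|<2, \]
and again $|\hat\mu(w)|=|\mu(1/\bar w)|/|w|<1$. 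By construction $\hat\mu$ is antisymmetric, supported on $A_{R,2}$, and $\hat\mu_1=\mu$ on $A(1/2,1/R)$.

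Next I would compare $\mathcal S\hat\mu_1$ with $\mathcal S\mu$ on the unit circle. Splitting $\mathcal S\hat\mu_1=\mathcal S\mu+\mathcal S(\hat\mu_1\chi_{A(R,2)})$ and performing the (anti-holomorphic) change of variables $w=1/\bar u$ in the second integral — with Jacobian $|u|^{-4}$, $\bar w=1/u$, and the identity $z-1/\bar u=z\,\overline{(u-z)}/\bar u$ valid for $|z|=1$ — all the powers of $u$ cancel and one obtains
\[ \mathcal S(\hat\mu_1\chi_{A(R,2)})(z)\;=\;-\,\frac{1}{z^{2}}\,\overline{\mathcal S\mu(z)}, \qquad |z|=1, \]
hence $\mathcal S\hat\mu_1(z)=\mathcal S\mu(z)-z^{-2}\,\overline{\mathcal S\mu(z)}$ on the unit circle. (This is the infinitesimal form of the reflection identity in Proposition \ref{apupropo}: it says precisely that $z\,\mathcal S\hat\mu_1(z)$ is purely imaginary on $|z|=1$.)

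Since $|z|=1$ gives $z^{-2}\bar z^{-2}=1$, expanding the modulus squared yields
\[ |\mathcal S\hat\mu_1(z)|^{2}\;=\;2\,|\mathcal S\mu(z)|^{2}\,-\,2\,\re\!\big(z^{2}(\mathcal S\mu(z))^{2}\big), \qquad |z|=1. \]
The cross term integrates to zero: $\frac{1}{2\pi}\int_{|z|=1}z^{2}(\mathcal S\mu(z))^{2}\,|dz|=\frac{1}{2\pi i}\int_{|z|=1}z\,(\mathcal S\mu(z))^{2}\,dz$, and since $\mu$ is supported in $\{|w|<1/R\}$ the integrand $z(\mathcal S\mu(z))^{2}$ is holomorphic in $\{|z|>1/R\}$ and is $O(z^{-3})$ as $z\to\infty$ (because $\mathcal S\mu(z)=O(z^{-2})$), so pushing the contour out to infinity gives $0$. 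Consequently
\[ \frac{1}{2\pi}\int_{|z|=1}|\mathcal S\mu(z)|^{2}\,|dz|\;=\;\frac12\cdot\frac{1}{2\pi}\int_{|z|=1}|\mathcal S\hat\mu_1(z)|^{2}\,|dz|, \]
and applying Lemma \ref{intmeans3} to $\hat\mu$ together with $\log\frac{1}{(R-1)^{2}}=2\log\frac{1}{R-1}$ gives exactly the claimed bound after dividing by $2$.

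The main point requiring care is the change-of-variables computation for $\mathcal S(\hat\mu_1\chi_{A(R,2)})$: the substitution $w\mapsto 1/\bar u$ is anti-holomorphic, so one must track the $|u|^{-4}$ Jacobian and the various conjugations precisely in order to see the cancellation that produces $-z^{-2}\overline{\mathcal S\mu(z)}$. Everything else — the bound $\|\hat\mu\|_\infty\le1$, the antisymmetry check, and the vanishing of $\int z^{2}(\mathcal S\mu)^2|dz|$ by holomorphicity and decay at infinity — is routine bookkeeping.
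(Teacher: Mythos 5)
Your proposal is correct and is essentially the paper's own argument: the antisymmetric coefficient $\hat\mu$ you build (namely $z\mu(z)$ on $A(1/2,1/R)$ reflected to $A(R,2)$ via \eqref{belt}) is exactly the paper's auxiliary $\nu$, and your identity $\mathcal S(\hat\mu_1\chi_{A(R,2)})(z)=-z^{-2}\overline{\mathcal S\mu(z)}$ on $|z|=1$ is the same reflection identity the paper derives before applying Lemma \ref{intmeans3}. The only (harmless) deviation is the last step: the paper writes $z\,\mathcal S\hat\mu_1=2i\im[z\,\mathcal S\mu]$, bounds the imaginary part, and repeats with $i\mu$ to handle the real part, whereas you expand $|\mathcal S\hat\mu_1|^2=2|\mathcal S\mu|^2-2\re\bigl(z^2(\mathcal S\mu)^2\bigr)$ and observe that the cross term integrates to zero by contour deformation, using that $z(\mathcal S\mu)^2$ is holomorphic for $|z|>1/R$ and decays like $|z|^{-3}$; both routes give the identical constant.
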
 
\begin{proof} Define an auxiliary Beltrami coefficient $\nu$ by requiring $\nu(z)= z \mu(z)$ for $|z| \leq 1$ and $\nu(z)= -  \frac{z^2}{\,\overline z^2\,} \overline{\, \nu(1/\overline z) \,}$ for $|z| \geq 1$.
Then $\nu$ is supported on $A_{R,2}$, $\| \nu \|_{\infty} \leq 1$ and $\nu$ is antisymmetric, so that
with help of Lemma \ref{intmeans3} we can estimate the integral means of ${\mathcal S} \nu_1 $,
where $ \nu_1(z) = \frac{\nu(z)}{z}$.

On the other hand, the antisymmetry condition \eqref{belt} implies 
$$\mathcal C (\chi_{\DD} \nu_1)(1/\overline z) =  \overline{ \, \mathcal C( \chi_{\C \setminus \DD}\nu_1) (z)\,} - 
\overline{ \, \mathcal C( \chi_{\C \setminus \DD}\nu_1) (0)\,}$$
    for the Cauchy transform. Differentiating this with respect to $\partial/\partial \overline{z}$ gives
    $$\frac{1}{\, \overline z \,}  {\mathcal S} (\chi_{\DD} \nu_1)\left(\frac{1}{\, \overline z \,} \right) \, = - \; \overline{z  {\mathcal S} ( \chi_{\C \setminus \DD}\nu_1)(z)}. 
    $$
In particular, for $z$ on the unit circle  $\mathbb S^1$,
\begin{eqnarray*}
z  {\mathcal S} ( \nu_1)(z) & = &  z  {\mathcal S} ( \chi_{ \DD}\nu_1)(z) + z  {\mathcal S} ( \chi_{\C \setminus \DD}\nu_1)(z) \\
& = &  2i  \im \bigl[ z \, {\mathcal S} ( \chi_{ \DD}\nu_1)(z) \bigr] \\
& = & 2i \im \bigl[ z \, ({\mathcal S} \mu) (z) \bigr].
\end{eqnarray*}
In other words, the estimates of Lemma \ref{intmeans3} take the form
  $$  \frac{1}{2\pi} \int_{|z| = 1} \Bigl  |\im \bigl[ z \, ({\mathcal S} \mu) (z) \bigr] \Bigr |^2 |dz|  =  \frac{1}{4} \, \frac{1}{2\pi} \int_{|z| = 1} |{\mathcal S} \nu_1(z)|^2 |dz| $$
  $$\hspace{2cm} \leq \frac{1}{4} (1+\delta) \log \frac{1}{(R-1)^2} +  \frac{1}{4} \log C(\delta/4), \quad 0 < \delta < 1.
  $$
  By replacing $\mu$ with $i\mu$, we see that the same bound holds for the integral means of $\re \bigl[ z \, ({\mathcal S} \mu)(z)]$. Therefore,
 \begin{eqnarray*} 
 \frac{1}{2\pi} \int_{|z| = 1} |{\mathcal S} \mu(z)|^2 |dz| & = &
    \frac{1}{2\pi} \int_{|z| = 1} \Bigl  |\re \bigl[ z \, ({\mathcal S} \mu) (z) \bigr] \Bigr |^2 +  \Bigl  |\im \bigl[ z \, ({\mathcal S} \mu) (z) \bigr] \Bigr |^2 \;  |dz| \\
& \leq & (1+\delta) \log \frac{1}{R-1} +  \frac{1}{2} \log C(\delta/4) 
\end{eqnarray*}
for every $0 < \delta < 1$.
\end{proof}
 
 \subsection{Asymptotic variance}\label{variance}

 With these preparations, we are ready to prove Theorem \ref{upperthm}. We need to show that if  $\mu$ is measurable with $|\mu(z)| \leq \chi_{\DD}$, then for all $1 < R < 2$,
   $$  \frac{1}{2\pi} \int_{0}^{2\pi} |{\mathcal S} \mu (Re^{i\theta})|^2  d\theta \leq (1+\delta) \log\frac{1}{R-1} + c(\delta), \qquad 0 < \delta < 1,
  $$
 where $c(\delta) < \infty$ is a constant depending only on $\delta$.

 \begin{proof}[Proof of Theorem \ref{upperthm}]
 For a proof of this inequality, first assume that additionally
   \begin{equation}
\label{apu3}
\mu(z) = 0 \quad \mbox{ for } |z| < 3/4; \qquad 1 < R < \frac{3}{2}.
\end{equation}
Then $ \nu(z) := \mu(Rz) $
   has support contained in $ B(0,1/R) \setminus B(0,1/2)$ so that we may apply Corollary \ref{means4}. Since $\mathcal S \nu(z) = \mathcal S\mu (Rz)$,
   $$   \frac{1}{2\pi} \int_{0}^{2\pi} |{\mathcal S} \mu (Re^{i\theta})|^2  d\theta =  \frac{1}{2\pi} \int_{|z| = 1} |{\mathcal S} \nu(z)|^2 |dz|
   $$
   $$\hspace{3cm} \leq (1+\delta) \log \frac{1}{R-1} +   \frac{1}{2} \log  C(\delta/4), 
   $$
which is the desired estimate.

   \medskip
   
   For the general case when \eqref{apu3} does not hold, write $\mu = \mu_1 + \mu_2$ where $\mu_2(z) = \chi_{B(0,3/4)} \mu(z)$.
   As
\begin{equation*}
|\mathcal S \mu_2(z)| \leq \int_{\frac{1}{4} < |z-\zeta| < 2} \;  \frac{1}{|\zeta -z|^2} dm(\zeta) = 2 \pi \log(8), \qquad |z| = 1,
\end{equation*}
we have   
   $$  \hspace{-4cm} \frac{1}{2\pi} \int_{0}^{2\pi} |{\mathcal S} \mu_1 (Re^{i\theta}) + {\mathcal S} \mu_2 (Re^{i\theta})|^2  d\theta
   $$
   $$\leq (1+\delta)  \frac{1}{2\pi} \int_{0}^{2\pi} |{\mathcal S} \mu_1 (Re^{i\theta})|^2  d\theta + \left(1 + \frac{1}{\delta}\right)  \frac{1}{2\pi} \int_{0}^{2\pi} |{\mathcal S} \mu_2 (Re^{i\theta})|^2  d\theta $$
   $$ \leq (1+\delta)^2 \log \frac{1}{R-1} +  \frac{1+ \delta}{2} \log  C(\delta/4) +  \frac{1+ \delta}{\delta}  4 \pi^2 \log^2(8)
   $$
 for $ 0 < \delta < 1$ and $1 < R < \frac{3}{2}$; while for $R \ge \frac{3}{2}$, we have the pointwise bound \eqref{growth13}. Finally, replacing $\delta$ by $\delta/3$, we get the estimate in the required form,  thus proving the theorem.
 \end{proof}

\section{Lower bounds} \label{se:lowerbound}

Consider the family of polynomials
\[ P_t(z)=z^d+t\, z, \qquad |t| < 1,
\]
for $d \geqslant 2$. 
According to  \cite[Theorem 1.8]{mcmullen} or \cite{amo,ruelle}, the Hausdorff dimensions of their Julia sets satisfy
\begin{equation} \label{eq:dimjulia} 
\Hdim \, \mathcal J(P_t)= 1+ \frac{|t|^2 (d-1)^2}{4d^2 \log d} +\mathcal{O}(|t|^3).
\end{equation}
Moreover, each Julia set $\mathcal J(P_t)$ is a quasicircle, the image of the unit circle by a quasiconformal mapping of the plane.
 A quick way to see this is to observe that the immediate basin of attraction of the origin contains all the (finite) critical points of $P_t$. (From general principles, it is clear that the
 basin
 must contain at least one critical point, but by the $(d-1)$-fold
  symmetry of $P_t$, it must contain them all.)
  
 If $A_{P_t}(\infty)$ denotes the basin of attraction of infinity, for each $|t|< 1$ there is a canonical conformal mapping 
\begin{equation} \label{conju}
\varphi_t: \mathbb{D}^* = A_{P_0}(\infty) \to A_{P_t}(\infty)
\end{equation}
conjugating the dynamics: 
\begin{equation} \label{conju2} 
\varphi_t \circ P_0(z)=P_t \circ \varphi_t(z), \qquad z \in \mathbb{D}^*.
\end{equation}
 By Slodkowski's extended $\lambda$-lemma \cite{Slod} and the  properties of holomorphic motions, 
 $\varphi_t$  extends to a $|t|$-quasiconformal mapping of the plane, see e.g.~\cite[Section 12.3]{AIMb}.
 In particular, the extension maps the unit circle onto the Julia set ${\mathcal J}(P_t)$.

While the extensions given by the $\lambda$-lemma are natural, surprisingly it turns out that the maps $\varphi_t$ have  extensions with considerably smaller quasiconformal distortion, smaller by a factor of
\begin{equation} \label{const} 
c_d :=  \frac{d^{\frac{1}{d-1}}}{2}, \qquad 2 \leq d \in \N,  
\end{equation}
when $|t| \to 0$.

\begin{theorem} \label{improvement}
Let $P_t(z) = z^d + tz$ with $|t| < 1$. Then the canonical conjugacy  $\varphi_t: \mathbb{D}^*  \to A_{P_t}(\infty)$, defined in \eqref{conju}, has a $\mu_t$-quasiconformal extension with
\[ \|Ê\mu_t\|_{\infty} =  c_d |t| + \mathcal{O}(|t|^2).
\]
\end{theorem}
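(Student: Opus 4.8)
The plan is to construct the quasiconformal extension of $\varphi_t$ directly from the dynamics rather than invoking the $\lambda$-lemma, in the spirit of the standard construction of conjugacies between hyperbolic rational maps. Fix $d\ge 2$ and write $P=P_t$, $P_0(z)=z^d$. Let $\psi_0\colon\DD^*\to A_{P_0}(\infty)$ be the identity Böttcher coordinate and $\psi_t\colon\DD^*\to A_{P_t}(\infty)$ the Böttcher coordinate conjugating $P_t$ to $z\mapsto z^d$ near $\infty$; then $\varphi_t=\psi_t\circ\psi_0^{-1}$ on a neighbourhood of $\infty$, extended by the functional equation \eqref{conju2} to all of $\DD^*$. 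The idea is to build an extension of $\varphi_t$ to the complementary (bounded) side by choosing \emph{any} reasonable diffeomorphic interpolation on a fundamental annulus for $P_0$ between two circles near $\mathbb S^1$, and then pulling it back by the dynamics infinitely often; the resulting map $F_t$ is automatically quasiconformal and conjugates $P_0$ to $P_t$ on the filled Julia set side. The dilatation of $F_t$ is then governed entirely by the dilatation of the interpolating map on the fundamental annulus, pushed forward around the attracting basin of $0$.

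First I would set up the fundamental domain. Since $P_t$ has a superattracting fixed point at $0$ (for $t=0$) perturbing to an attracting fixed point, and all $d-1$ finite critical points lie in the immediate basin of $0$, the basin $A_{P_t}(\infty)$ is a quasidisk and $\mathcal J(P_t)$ is a quasicircle. On the basin of $0$ there is a Böttcher-type (or Kœnigs, depending on multiplier) coordinate; the key point is that near the Julia set both $A_{P_0}(0)=\DD$ and $A_{P_t}(0)$ carry linearising/Böttcher coordinates, and one computes the first-order deviation of these coordinates as $t\to 0$. Concretely, one writes down the conjugacy on an explicit annular fundamental domain $\{r<|z|<r^{1/d}\}\subset\DD$ for $P_0$, transported by the coordinate change, and measures its Beltrami coefficient. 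The factor $c_d=d^{1/(d-1)}/2$ should emerge from exactly this computation: the $d-1$ finite critical points of $P_t$ are at the $(d-1)$-st roots of $-t/d$, so they sit at radius $|t/d|^{1/(d-1)}$ from the origin, and the critical \emph{value} $P_t$ takes there has modulus $\sim \tfrac{d-1}{d}\,|t|\,|t/d|^{1/(d-1)}$; tracking how this critical orbit data enters the linearising coordinate of $0$ and thus the required distortion of the interpolation produces the constant $c_d$ to leading order.

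The cleanest way to organise the estimate is perturbatively: expand $\varphi_t(z)=z+t\,v(z)+O(t^2)$ on $\DD^*$ where $v$ solves the infinitesimal (linearised) cohomological equation obtained by differentiating \eqref{conju2} at $t=0$, namely $v(z^d)=d z^{d-1}v(z)+z$ on $\mathbb S^1$; solving this by a Fourier/series argument gives $v$ explicitly, and its boundary behaviour determines the minimal dilatation of an extension. One then shows that the infimal extension dilatation of the infinitesimal vector field equals $c_d|t|+O(t^2)$ by exhibiting an explicit Beltrami coefficient $\mu_t$ supported in $\DD$ that realises it (built from the interpolation described above) and, if an exact matching lower bound is wanted, noting it is not needed for the theorem — only the upper bound $\|\mu_t\|_\infty\le c_d|t|+O(|t|^2)$ is asserted. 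I expect the main obstacle to be the bookkeeping near the critical points: the interpolating diffeomorphism on the fundamental annulus of $P_0$ must be chosen compatibly with the multivalence of $P_t$ (the $d-1$ critical points forcing the fundamental domain of the attracting basin of $0$ to be a punctured/slit region), and one must verify that after infinitely many pullbacks the dilatation does not blow up — this follows because the pullbacks accumulate only on $\mathcal J(P_t)$, where conformality is preserved, but making this rigorous requires the usual telescoping argument controlling $\|\mu_{F_t}\|_\infty$ by the single-annulus value, together with a careful first-order expansion of the coordinate changes to extract the sharp constant $c_d$.
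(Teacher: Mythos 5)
Your overall framework (interpolate on a fundamental annulus and pull back by the dynamics, organised perturbatively through the linearised equation \eqref{eq:functionaleq} and the lacunary series \eqref{eq:lacunaryjulia}) is plausible, but the proposal stops exactly where the content of the theorem lies, and the heuristic you give for the constant points in the wrong direction. The whole point of the statement is the specific value $c_d=d^{1/(d-1)}/2$, and to obtain it one must actually exhibit a Beltrami coefficient $\mu^\#$ supported in $\DD$ with $\|\mu^\#\|_\infty=c_d$ whose Cauchy transform equals the vector field $v$ (equivalently $\mathcal S\mu^\#=v'$ on $\DD^*$). In the paper this is an explicit, $t$-independent construction (Lemma \ref{mu}): shells $A(r_j,r_{j+1})$ with $r_j=\rho_0^{1/((d-1)d^j)}$ carrying $(\overline z/|z|)^{n_j-2}$, $n_j=(d-1)d^j$, eventually invariant under $z\mapsto z^d$ and satisfying $\mathcal C\mu=-\tfrac{2d}{d-1}\bigl[\rho_0^{1/d}-\rho_0\bigr]\,v$; the constant $c_d$ emerges by optimising the free parameter $\rho_0$, at $\rho_0=d^{d/(1-d)}$. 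Your suggestion that $c_d$ should come from tracking the critical points of $P_t$ at radius $|t/d|^{1/(d-1)}$ and their critical values does not lead there: those radii tend to $0$ with $t$, whereas the relevant optimisation is over a fixed inner radius of the support of a $t$-independent coefficient, and a generic interpolating diffeomorphism on a fundamental annulus only yields $\|\mu_t\|_\infty=\mathcal O(|t|)$ with an uncontrolled (in general worse) constant.

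Even granting such a $\mu^\#$, you still need a mechanism converting the infinitesimal identity $\mathcal S\mu^\#=v'$ into an actual family of extensions of $\varphi_t$ whose dilatation is $t\mu^\#+\mathcal O(t^2)$ in $L^\infty$. The paper does this by starting from the $\lambda$-lemma extension $H_t$, using the vector-valued Schwarz lemma and the Neumann series \eqref{neumann} to get $\mu_{H_t}=t\mu_0+\mathcal O(t^2)$ with $\mathcal S\mu_0=v'$, observing that $\mu_0-\mu^\#$ is infinitesimally trivial, and correcting by quasiconformal maps $N_t$ that are the identity on $\DD^*$ with $\mu_{N_t}=t(\mu_0-\mu^\#)+\mathcal O(t^2)$ (Lehto, Lemma V.7.1), so that $H_t\circ N_t^{-1}$ still extends $\varphi_t$ and has the announced dilatation. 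Your pullback construction could in principle replace this step, but then you must prove that the interpolation can be chosen so that, after infinitely many dynamical pullbacks, the resulting dilatation has sup norm exactly $c_d|t|+\mathcal O(t^2)$ --- precisely the sharp quantitative input that your proposal defers to ``bookkeeping''. As written, the argument shows at best that some quasiconformal extension with $\|\mu_t\|_\infty=\mathcal O(|t|)$ exists, which is already the content of the $\lambda$-lemma and strictly weaker than the theorem.
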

Here $c_2 = 1$, but $c_d < 1$ for $d \ge 3$. Hence for every degree $ \ge 3$ we have an improved bound for the distortion. Furthermore, when representing $\mathcal J(P_t)$ as the image of the unit circle by a map with as small distortion as possible, 
one can apply Theorem \ref{improvement} together with the symmetrisation method described in Section \ref{subsec:quasicircles} 
 to show that  each $\mathcal J(P_t)$ is a $k(t)$-quasicircle, where $$k(t) = \frac{c_d}{2} |t|  + \mathcal{O}(|t|^2).$$ 
By the dimension formula \eqref{eq:dimjulia}, 
\begin{equation}
\label{eq:numericalvalues}
\Hdim \, \mathcal J(P_t)= 1+ \frac{ 4 d^{\frac{2}{1-d}}(d-1)^2}{d^2 \log d} \, |k(t)|^2 +\mathcal{O}(|k(t)|^3).
\end{equation}
In particular, when $d = 20$, we get $k$-quasicircles whose Hausdorff dimension is greater than $1+0.87913 \, k^2$, for small values of $k$. Therefore, Theorem \ref{julia} follows from Theorem \ref{improvement}.

The numerical values for the second order term of \eqref{eq:numericalvalues} are presented in Table \ref{table:comparison} below. These provide lower bounds on the asymptotic variance (or equivalently, on the quasicircle dimension asymptotics).
For comparison, we also show the values for the second order term of \eqref{eq:dimjulia} which correspond to the estimate 
on quasiconformal distortion provided by the $\lambda$-lemma.
Note that the first explicit lower bound on quasicircle dimension asymptotics  \cite{BP} is exactly the degree $2$ case of the upper-left corner.

\smallskip
\begin{table}[htbp]
\renewcommand{\arraystretch}{1.2}
\begin{center}
\begin{tabular}{|c|c|c|}
\hline
\; Degree \; & \; $\lambda$-lemma  \; & \; Bounds from \eqref{eq:numericalvalues} \; \\ \hline 
$d=2$ & $0.3606\ldots$ & $0.3606\ldots$ \\  \hline
$d=3$ & $0.4045\ldots$ & $0.5394\ldots$ \\ \hline
$d=4$ & $0.4057\ldots$ & $0.6441\ldots$ \\ \hline
$d=20$ & $0.3012\ldots$ & $0.8791\ldots$ \\ \hline
\end{tabular}
\end{center}
\bigskip
\caption{Comparison of lower bounds for $\Sigma^2$}
\label{table:comparison}
\end{table}

For the proof of Theorem \ref{improvement}, we find an improved representation for the infinitesimal vector field determined by $\varphi_t$.
Differentiating  \eqref{conju2}, we get a functional equation
\begin{equation}
\label{eq:functionaleq}
v(z^d)=d\, z^{d-1} v(z)+z
\end{equation}
 for the vector field 
$v=\frac{d\varphi_t}{dt}\big|_{t=0}$, 
which in turn forces the lacunary series expansion, see \cite[Section 5]{mcmullen}, 
\begin{equation} \label{eq:lacunaryjulia}
v(z)=-\frac{z}{d}\sum_{n=0}^\infty \frac{z^{-(d-1)d^n}}{d^n}, \qquad |z|>1.
\end{equation}

Our aim is to represent the lacunary series \eqref{eq:lacunaryjulia} as the Cauchy transform (or $v'$ as the Beurling transform) of an explicit bounded function supported on the unit disk. 
We will achieve this through the functional equation \eqref{eq:functionaleq}.
For this reason, we will look for Beltrami coefficients with invariance properties under $f(z) = z^d$, 
requiring that $f^*\mu = \mu$ in some neighbourhood of the unit circle, where
\begin{equation}(f^*\mu) (z) := \mu(f(z)) \frac{\overline{f'(z)}}{f'(z)}.
\end{equation}

We first observe that the Cauchy transform \eqref{cauch} behaves similarly to a vector field under the pullback operation:
\begin{lemma}
\label{pullback-coefficients}
 Suppose $\mu$ is a Beltrami coefficient supported on the unit disk. Then,
\begin{equation}
\label{eq:pullback1}
 \frac{1}{dz^{d-1}} \Bigl \{ \mathcal  C\mu(z^d)- \mathcal C\mu(0) \Bigr\} = \mathcal C\Bigl ((z^d)^*\mu \Bigr )(z), \qquad z \in \mathbb{C}.
\end{equation}
\end{lemma}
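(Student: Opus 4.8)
The claim (Lemma \ref{pullback-coefficients}) is a change-of-variables identity for the Cauchy transform under the pullback $(z^d)^*\mu$. The plan is to compute the right-hand side directly from the definitions and match it with the left-hand side. Recall that $(z^d)^*\mu(w) = \mu(w^d)\,\overline{(w^d)'}/(w^d)' = \mu(w^d)\,\overline{d w^{d-1}}/(d w^{d-1}) = \mu(w^d)\,\overline{w}^{\,d-1}/w^{d-1}$, and this function is supported where $w^d \in \DD$, i.e.\ on $\DD$ itself. So $\mathcal C\bigl((z^d)^*\mu\bigr)(z) = \frac1\pi \int_{\DD} \mu(w^d)\,\dfrac{\overline{w}^{\,d-1}}{w^{d-1}}\,\dfrac{1}{z-w}\,dm(w)$.

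First I would substitute $\zeta = w^d$ in this integral. The map $w \mapsto w^d$ is a $d$-to-$1$ branched cover of $\DD$ onto $\DD$; writing $m$ for area measure, the change of variables gives $dm(\zeta) = |d w^{d-1}|^2\,dm(w) = d^2 |w|^{2(d-1)}\,dm(w)$, so $dm(w) = dm(\zeta)/(d^2|w|^{2(d-1)})$. Using $\overline w^{\,d-1}/w^{d-1} \cdot 1/(d^2 |w|^{2(d-1)}) = 1/(d^2 w^{2(d-1)}) = 1/(d^2 \zeta^{2})\cdot\zeta^2/w^{2(d-1)}$ — more simply, $\overline{w}^{d-1} = \overline{\zeta}\,\overline{w}^{d-1}/\overline\zeta$ is awkward, so instead I keep $\overline w^{d-1}/(w^{d-1} \cdot d^2|w|^{2(d-1)}) = 1/(d^2 w^{2(d-1)}) = 1/(d^2\zeta^2)$. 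Summing over the $d$ preimages $w$ of each $\zeta$ (each contributing $1/(d^2\zeta^2)$ times the factor $1/(z-w)$), one obtains
\begin{equation*}
\mathcal C\bigl((z^d)^*\mu\bigr)(z) = \frac{1}{\pi}\int_{\DD}\frac{\mu(\zeta)}{d^2\zeta^2}\Biggl(\sum_{w:\,w^d=\zeta}\frac{1}{z-w}\Biggr)dm(\zeta).
\end{equation*}

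Next I would evaluate the inner sum over the $d$-th roots $w = \zeta^{1/d}\omega$, $\omega^d = 1$, using the partial-fraction / rational-function identity $\sum_{\omega^d=1}\dfrac{1}{z - \zeta^{1/d}\omega} = \dfrac{d\,z^{d-1}}{z^d - \zeta}$ (which follows from $\prod_{\omega^d=1}(z - \zeta^{1/d}\omega) = z^d - \zeta$ and logarithmic differentiation). Plugging this in,
\begin{equation*}
\mathcal C\bigl((z^d)^*\mu\bigr)(z) = \frac{1}{\pi}\int_{\DD}\frac{\mu(\zeta)}{d^2\zeta^2}\cdot\frac{d\,z^{d-1}}{z^d-\zeta}\,dm(\zeta) = \frac{z^{d-1}}{d\pi}\int_{\DD}\frac{\mu(\zeta)}{\zeta^2(z^d-\zeta)}\,dm(\zeta).
\end{equation*}
Finally, the partial-fraction decomposition $\dfrac{1}{\zeta^2(z^d-\zeta)} = \dfrac{1}{z^{2d}}\Bigl(\dfrac{1}{z^d-\zeta} + \dfrac{z^d}{\zeta^2} + \dfrac{1}{\zeta}\Bigr)$ — or more robustly, writing $\dfrac{1}{z^d-\zeta} = \dfrac{1}{z^d} + \dfrac{\zeta}{z^d(z^d-\zeta)}$ and iterating — lets me separate the integral into $\mathcal C\mu(z^d)$ (from the $\frac{1}{z^d-\zeta}$ term) plus constant-in-$z$ pieces that must assemble to $-\mathcal C\mu(0)$; concretely $\frac1\pi\int_\DD \mu(\zeta)\,dm(\zeta)/\zeta$-type terms vanish or cancel because $\mathcal C\mu(0) = \frac1\pi\int_\DD \mu(\zeta)/(-\zeta)\,dm(\zeta)$. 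Tracking the bookkeeping so that the non-$z$-dependent terms combine precisely into $-\tfrac{1}{dz^{d-1}}\mathcal C\mu(0)$ is the one place requiring care; everything else is formal manipulation of rational functions. The integrals converge absolutely since $\mu \in L^\infty(\DD)$ has compact support and $1/\zeta$, $1/\zeta^2$ are locally integrable against area measure on $\DD$ away from issues at $0$ — in fact $1/\zeta^2$ is borderline, so I would instead avoid introducing $1/\zeta^2$ by doing the root-of-unity sum together with the substitution in a single step, keeping the integrand $\mu(\zeta)\cdot(\text{bounded}\times\text{Cauchy kernel})$ throughout; this is the main technical obstacle to watch. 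Alternatively, and perhaps most cleanly, I would verify the identity by checking that both sides have the same $\overline\partial$-derivative (namely $(z^d)^*\mu$, using $\overline\partial(\mathcal C\mu) = \mu$ and the chain rule $\overline\partial[\mathcal C\mu(z^d)] = \mu(z^d)\,\overline{d z^{d-1}}$, then dividing by $d z^{d-1}$) and the same behavior at infinity and at $z=0$, invoking uniqueness of solutions to the $\overline\partial$-equation with prescribed decay — this sidesteps all the root-sum bookkeeping, at the cost of needing the regularity of $\mathcal C\mu$ to justify differentiating the composition.
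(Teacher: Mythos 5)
Your primary route (direct change of variables) contains an algebra slip that derails it. Under the substitution $\zeta=w^d$ the correct factor is $\frac{\overline w^{\,d-1}}{w^{d-1}}\cdot\frac{1}{d^2|w|^{2(d-1)}}=\frac{1}{d^2w^{2(d-1)}}=\frac{w^2}{d^2\zeta^2}$, \emph{not} $\frac{1}{d^2\zeta^2}$, since $w^{2(d-1)}=\zeta^2/w^2$. Dropping the factor $w^2$ leads to your formula $\frac{z^{d-1}}{d\pi}\int_\DD\frac{\mu(\zeta)}{\zeta^2(z^d-\zeta)}\,dm(\zeta)$, which cannot be right: the integrand has a genuinely non-integrable singularity at the origin ($|\zeta|^{-2}$ is not locally area-integrable, so this is worse than ``borderline''), and even formally the bookkeeping you postponed cannot close, because partial fractions give $\frac{1}{\zeta^2(z^d-\zeta)}=\frac{1}{z^d\zeta^2}+\frac{1}{z^{2d}\zeta}+\frac{1}{z^{2d}(z^d-\zeta)}$, so the $\mathcal C\mu(z^d)$ term would come with prefactor $\frac{1}{dz^{d+1}}$ instead of the required $\frac{1}{dz^{d-1}}$. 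The computation is salvageable once the $w^2$ is kept: writing $\frac{w^2}{z-w}=\frac{z^2}{z-w}-(z+w)$ and using $\sum_{w^d=\zeta}w=0$ (for $d\ge 2$), the inner sum is $\sum_{w^d=\zeta}\frac{w^2}{z-w}=\frac{dz^{d+1}}{z^d-\zeta}-dz$, whence $\mathcal C\bigl((z^d)^*\mu\bigr)(z)=\frac{z}{d\pi}\int_\DD\frac{\mu(\zeta)}{\zeta(z^d-\zeta)}\,dm(\zeta)$ with an absolutely convergent integral; then $\frac{1}{\zeta(z^d-\zeta)}=\frac{1}{z^d}\bigl(\frac{1}{\zeta}+\frac{1}{z^d-\zeta}\bigr)$ together with $\mathcal C\mu(0)=-\frac{1}{\pi}\int_\DD\frac{\mu(\zeta)}{\zeta}\,dm(\zeta)$ yields exactly \eqref{eq:pullback1}.

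Your closing alternative is, in fact, the paper's proof, but as written it is only a sketch and the point you flag as a ``cost'' is precisely the missing ingredient. The paper supplies it: $\mathcal C\mu$ belongs to $\Lip_\alpha$ for every $\alpha<1$ (Cauchy transform of a bounded, compactly supported function), so near $z=0$ the left-hand side of \eqref{eq:pullback1} is $O(|z|^{d\alpha-(d-1)})=O(|z|^{1-\varepsilon})$; this is what guarantees that dividing by $dz^{d-1}$ creates no distribution supported at the origin, so both sides have the same distributional $\overline\partial$-derivative, namely $(z^d)^*\mu$, and since both vanish at infinity they coincide on the Riemann sphere. If you take this route, make that H\"older estimate and the treatment of the origin explicit; as submitted, neither of your two arguments is complete -- the first because of the computational error above, the second because the behaviour at $z=0$ is asserted rather than proved.
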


\begin{proof} From   \cite[p. 115]{AIMb}, it follows that the Cauchy transform of a bounded, compactly supported function belongs to all  H\"older classes
 $\Lip_{\alpha}$ with exponents $0 < \alpha < 1$.
In particular, near the origin, the left hand side of (\ref{eq:pullback1}) is $\mathcal{O}(|z|^{1-\varepsilon})$ for every $\varepsilon > 0$.
  This implies that  the two quantities  in \eqref{eq:pullback1} have the same $(\partial/\partial \overline{z})$-distributional derivatives. As 
both vanish at infinity, they must be identically equal on the Riemann sphere.
\end{proof}

\begin{remark} \label{rem:cauchy0}
Since the left hand side in \eqref{eq:pullback1} vanishes at $0$, we always have $\mathcal C\bigl ((z^d)^*\mu \bigr )(0) = 0$. 
This can also be seen by using the change of variables $z \to \zeta \cdot z$ where $\zeta$ is a $d$-th root of unity.
\end{remark}

We will use the following basic Beltrami coefficients as building blocks:
 
\begin{lemma}
\label{lemma:basic-coefficient}
Let $\mu_n(z) := \bigl( {\overline z} /|z| \bigr)^{n-2} \chi_{A(r,\rho)}$ with $0 < r < \rho < 1$ and $2 \le n \in \mathbb{N}$. Then 
\begin{equation*}
 \mathcal{C}\mu_n (z) = \frac{2}{n} \left( \rho^{n} - r^{n} \right) z^{-(n-1)}, \qquad |z| > 1,
\end{equation*}
and $ \mathcal{C}\mu_n (0)=0 $.
\end{lemma}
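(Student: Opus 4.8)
The plan is to compute $\mathcal{C}\mu_n$ directly from the definition of the Cauchy transform, exploiting the rotational symmetry of the building block $\mu_n(z) = (\bar z/|z|)^{n-2}\chi_{A(r,\rho)}$. First I would write, for $|z| > 1$ (so that the kernel is holomorphic on the support),
\[
\mathcal{C}\mu_n(z) = \frac{1}{\pi}\int_{A(r,\rho)} \frac{(\overline{\zeta}/|\zeta|)^{n-2}}{z-\zeta}\, dm(\zeta),
\]
and expand the Cauchy kernel in a geometric series $\frac{1}{z-\zeta} = \sum_{k\ge 0} \zeta^k z^{-(k+1)}$, which converges uniformly on the compact set $\{|\zeta|\le\rho\}$ since $|z|>1>\rho$. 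Passing the sum through the integral reduces the computation to the moments $\int_{A(r,\rho)} \zeta^k (\overline\zeta/|\zeta|)^{n-2}\, dm(\zeta)$.

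Next I would evaluate these moments in polar coordinates $\zeta = se^{i\theta}$, $dm(\zeta) = s\, ds\, d\theta$. The integrand becomes $s^k e^{ik\theta} s^{-(n-2)} e^{-i(n-2)\theta} s^{n-2} = s^k e^{i(k-n+2)\theta}$, so the angular integral $\int_0^{2\pi} e^{i(k-n+2)\theta}\,d\theta$ vanishes unless $k = n-2$, in which case it equals $2\pi$. Thus only the $k=n-2$ term survives, and the radial integral is $\int_r^\rho s^{n-2}\cdot s\, ds = \int_r^\rho s^{n-1}\,ds = \frac{\rho^n - r^n}{n}$. Collecting the constants: $\mathcal{C}\mu_n(z) = \frac{1}{\pi}\cdot 2\pi \cdot \frac{\rho^n-r^n}{n}\cdot z^{-(n-1)} = \frac{2}{n}(\rho^n - r^n) z^{-(n-1)}$, which is exactly the claimed formula.

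Finally, the value $\mathcal{C}\mu_n(0) = 0$ follows from the same symmetry argument (or from Remark \ref{rem:cauchy0}-type reasoning): substituting $\zeta \mapsto \omega\zeta$ with $\omega$ a nontrivial $n$-th root of unity, say $\omega = e^{2\pi i/n}$, one checks $\mu_n(\omega\zeta) = \omega^{-(n-2)}\mu_n(\zeta)$ while the measure and the annulus are invariant, so $\mathcal{C}\mu_n(0) = \frac{1}{\pi}\int \frac{\mu_n(\zeta)}{-\zeta}\,dm = \omega^{-(n-2)}\cdot\omega^{-1}\cdot\frac{1}{\pi}\int\frac{\mu_n(\zeta)}{-\zeta}dm = \omega^{-(n-1)}\mathcal{C}\mu_n(0)$; since $n\ge 2$ one can choose $\omega$ with $\omega^{n-1}\ne 1$, forcing $\mathcal{C}\mu_n(0)=0$. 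Alternatively one just notes the $k$-th term computation above with the formal substitution corresponding to the constant term also vanishes by the angular integral.

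There is no serious obstacle here — the only point requiring a word of care is justifying the interchange of sum and integral (dominated convergence, using that $|\zeta|\le \rho < 1 < |z|$ gives a geometric majorant), and making sure the series expansion is valid on the whole support, which it is since $\rho < 1 < |z|$. The computation of $\mathcal{C}\mu_n(0)$ is immediate from symmetry, so the entire lemma is a routine but clean calculation.
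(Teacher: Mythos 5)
Your proposal is correct and follows essentially the same route as the paper: expand the Cauchy kernel in a geometric series (valid since $|\zeta|\le\rho<1<|z|$) and use orthogonality so that only the moment $\int_{A(r,\rho)}\mu_n(w)w^{n-2}\,dm(w)=\tfrac{2\pi}{n}(\rho^n-r^n)$ survives, giving the stated formula. The claim $\mathcal{C}\mu_n(0)=0$ is handled in the paper by the same orthogonality observation, which your rotational-symmetry argument (or the direct angular integral) reproduces.
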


\begin{proof} We compute:
\begin{equation*}
\int_{\mathbb{D}} \mu_n(w) \cdot w^{n-2} dm(w) = \int_{A(r,\rho)} |w|^{n-2} dm(w) = \frac{2\pi}{n} (\rho^{n} - r^{n}).
\end{equation*}
Hence, by orthogonality
\begin{eqnarray*}
\mathcal C\mu_n(z)
& = & \frac{1}{\pi z} \int_{\mathbb{D}}\frac{\mu_n(w)dm(w)}{(1-w/z)}\\
& = &  \frac{1}{\pi z} \sum_{j=0}^\infty z^{-j}\int_{\mathbb{D}}\mu_n(w)w^j dm(w)\\
 & = &  \frac{1}{\pi z} \cdot z^{-(n-2)} \cdot \frac{2\pi}{n} \cdot (\rho^{n} - r^{n}) \\
& = & \frac{2}{n} \cdot z^{-(n-1)} \cdot (\rho^{n} - r^{n})
\end{eqnarray*}
as desired. The claim  $ \mathcal{C}\mu_n (0)=0 $  follows similarly.
\end{proof}

To represent  power series in $z^{-1}$, we sum up $\mu_n$'s supported on disjoint annuli:

\begin{lemma} \label{mu}
For $d \ge 3$  and $\rho_0 \in (0,1)$, let 
$$n_j = (d-1) \, d^j, \qquad r_j = \rho_0^{1/n_j},   \qquad j = 0,1,2,\dots
$$
and define the Beltrami coefficient $\mu$ by
$$ \mu(z) =  \bigl( {\overline z} /|z| \bigr)^{n_j-2}, \qquad  r_j < |z| < r_{j +1}, \qquad  j \in \mathbb N, 
$$
while for $|z| < \rho_0^{1/n_0}$ and for $|z| > 1$, we set $\mu(z) = 0$.
With these choices,
\medskip

{\em (i)} 
$\mu = (z^d)^* \mu+\mu \cdot \chi_{A(r_0,r_1)}$ and 

{\em (ii)} $\mathcal C\mu(z^d)=dz^{d-1}\, \mathcal C\mu(z) -\frac{2d}{d-1}[\rho_0^{1/d} - \rho_0] \cdot z, \quad |z|>1$.

\noindent In particular, for $|z| > 1$ we have

{\em (iii)} $\mathcal C\mu (z)=   -\frac{2d}{d-1} [\rho_0^{1/d} - \rho_0] \, v(z), 
$\quad  with 

\smallskip

{\em (iv)} 
$\mathcal S\mu (z) = -\frac{2d}{d-1} [\rho_0^{1/d} - \rho_0] \, v'(z),$ 
\medskip

\noindent where $v = v_d$ is the lacunary series in \eqref{eq:lacunaryjulia}.
\end{lemma}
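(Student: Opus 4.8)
The plan is to establish (i) by a direct computation with the annular exponents, and then read off (ii), (iii), (iv) as consequences using Lemma \ref{pullback-coefficients} and Lemma \ref{lemma:basic-coefficient}. For (i), I would compute $(z^d)^*\mu$ explicitly. On the annulus $r_j < |z| < r_{j+1}$ the coefficient is $(\overline z/|z|)^{n_j-2}$; since $w = z^d$ sends the annulus $r_{j+1}^{1/d} < |z| < r_{j+2}^{1/d}$ onto $r_{j+1} < |w| < r_{j+2}$, and since $r_j^{1/d} = \rho_0^{1/(d\, n_j)} = \rho_0^{1/n_{j+1}} = r_{j+1}$ (this is the key numerical identity that makes the construction work), the preimage annuli are exactly the $\{r_{j+1} < |z| < r_{j+2}\}$. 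With $f(z) = z^d$ one has $\overline{f'(z)}/f'(z) = (\overline z/|z|)^{2(d-1)} \cdot |z|^0$ up to a unimodular factor that works out to $(\overline z/z)^{d-1}$, and combining $\mu(z^d) = (\overline{z^d}/|z^d|)^{n_{j+1}-2}$ with this Jacobian factor gives $(\overline z/|z|)^{d(n_{j+1}-2) + 2(d-1)} = (\overline z/|z|)^{d\, n_{j+1} - 2}$. Since $d\, n_{j+1} = d^2(d-1)d^j \cdot \tfrac{1}{d}$... more carefully, $n_{j+2} = (d-1)d^{j+2} = d \cdot n_{j+1}$, so the exponent is $n_{j+2} - 2$, which is exactly $\mu$ on the annulus $r_{j+1} < |z| < r_{j+2}$. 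Hence $(z^d)^*\mu$ agrees with $\mu$ on all annuli with $j \ge 1$, and the only annulus of $\mu$ not covered is $A(r_0, r_1)$; this is precisely assertion (i). I should double-check the behaviour near $|z| = 1$: since $r_{j+1} \to 1$, the supports exhaust $1/\rho_0^{?}< |z| < 1$ correctly, and for $|z|>1$ both sides vanish.

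For (ii), I would apply Lemma \ref{pullback-coefficients} to $\mu$: it gives $\frac{1}{dz^{d-1}}\{\mathcal C\mu(z^d) - \mathcal C\mu(0)\} = \mathcal C((z^d)^*\mu)(z)$, and $\mathcal C\mu(0) = 0$ by Lemma \ref{lemma:basic-coefficient} applied termwise (or directly, each $\mu_n$ has $\mathcal C\mu_n(0)=0$). By (i), $(z^d)^*\mu = \mu - \mu\chi_{A(r_0,r_1)}$, so $\mathcal C((z^d)^*\mu) = \mathcal C\mu - \mathcal C(\mu\chi_{A(r_0,r_1)})$. The correction term $\mu\chi_{A(r_0,r_1)}$ is exactly the basic coefficient $\mu_{n_0}$ with $n_0 = d-1$, annulus $A(r_0, r_1) = A(\rho_0^{1/n_0}, \rho_0^{1/n_1})$, so $r_0^{n_0} = \rho_0$ and $r_1^{n_0} = \rho_0^{n_0/n_1} = \rho_0^{1/d}$; Lemma \ref{lemma:basic-coefficient} then yields $\mathcal C(\mu\chi_{A(r_0,r_1)})(z) = \frac{2}{d-1}(\rho_0^{1/d} - \rho_0)z^{-(d-2)}$ for $|z|>1$. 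Multiplying through by $dz^{d-1}$ rearranges to $\mathcal C\mu(z^d) = dz^{d-1}\mathcal C\mu(z) - \frac{2d}{d-1}(\rho_0^{1/d} - \rho_0)z$, which is (ii).

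Finally, (iii) follows by comparing (ii) with the functional equation \eqref{eq:functionaleq} satisfied by $v$: both $\mathcal C\mu(z)$ and $c\cdot v(z)$ with $c = -\frac{2d}{d-1}(\rho_0^{1/d}-\rho_0)$ are holomorphic in $|z|>1$, vanish at $\infty$ (the $\mathcal C$-transform of a compactly supported $L^\infty$ function is $\mathcal O(1/z)$, and $v$ is given by its lacunary series), and satisfy the same affine functional equation $g(z^d) = dz^{d-1}g(z) + cz$; since the solution of this equation that is holomorphic and decaying on $\mathbb D^*$ is unique (expand in a series in $z^{-1}$ and match coefficients recursively — this is the computation already cited from \cite[Section 5]{mcmullen}), we get $\mathcal C\mu = c\, v$ on $\mathbb D^*$. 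Assertion (iv) is then immediate by applying $\partial$ and using $\partial(\mathcal C\mu) = \mathcal S\mu$ together with $v' = \partial v$. The main obstacle I anticipate is purely bookkeeping: getting the unimodular Jacobian factor $\overline{f'}/f'$ for $f(z)=z^d$ and the exponent arithmetic $d(n_{j+1}-2) + 2(d-1) = n_{j+2}-2$ exactly right, and carefully matching the endpoints of the annuli via $r_j^{1/d} = r_{j+1}$; there is no analytic difficulty beyond that.
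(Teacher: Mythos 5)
Your proposal is correct and follows essentially the same route as the paper: (i) by the exponent arithmetic $d(n_j-2)+2(d-1)=n_{j+1}-2$ together with $r_j^{1/d}=r_{j+1}$, (ii) by inserting (i) into Lemma \ref{pullback-coefficients} and evaluating the correction term on $A(r_0,r_1)$ via Lemma \ref{lemma:basic-coefficient} (using $d\ge 3$ so that $n_0=d-1\ge 2$), and (iii)--(iv) by matching the functional equation \eqref{eq:functionaleq} and differentiating, with your uniqueness-of-decaying-solutions argument making explicit what the paper leaves implicit. The only blemish is a harmless off-by-one in the annulus labels in your verification of (i); the computation and conclusion are nevertheless right.
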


\begin{proof}
Claim (i) is clear from the construction. Inserting (i) into \eqref{eq:pullback1} and using   
Lemma \ref{lemma:basic-coefficient} gives (ii).
 This agrees with the functional equation (\ref{eq:functionaleq}) up to a constant term in front of $z$ which leads to (iii). Finally, (iv) follows by differentiation.
\end{proof}

\begin{remark}
The $d=2$ case of Lemma \ref{mu} is somewhat different since the vector field $v_2$ does not vanish at infinity, so
$v_2$ is not the Cauchy transform of any Beltrami coefficient. With the choice $n_j=2^{j+1}$, (ii) and (iii) hold up to an additive constant, while (iv) holds true as stated.
\end{remark}

Differentiating \eqref{eq:lacunaryjulia}, we see that
\begin{eqnarray*}
v'(z) & = & \sum_{n \ge 0}   z^{-(d-1)d^n} \cdot \frac{(d-1)d^n-1}{d^{n+1}} \\
& = & \frac{(d-1)}{d} \cdot  \sum_{n \ge 0}   z^{-(d-1)d^n} + b_0
\end{eqnarray*}
for some function $b_0 \in \mathcal B_0^*$, which implies 
\begin{equation*}
\sigma^2(v'(z)) = \frac{(d-1)^2}{d^2 \log d}.
\end{equation*}
Therefore, the Beltrami coefficient $\mu=\mu_d$ from Lemma \ref{mu} satisfies
$$ \sigma^2(\mathcal{S}\mu) = \frac{4[\rho_0^{1/d}-\rho_0]^2}{\log d}.
$$

\noindent Fixing $d$ and optimising over $\rho_0 \in (0,1)$, simple calculus reveals that the maximum is obtained when $\rho_0=d^{\frac{d}{1-d}}.$ For this choice of $\rho_0$, 
\begin{equation}  \label{better} v'(z) = - c_d \, \mathcal S\mu (z)  
\end{equation}
where $c_d$ is the constant from \eqref{const}. Moreover,
\begin{equation}
\sigma^2(\mathcal{S}\mu)  =  4 d^{\frac{2}{1-d}}  \frac{(d-1)^2}{d^2 \log d}
\end{equation}
obtains its maximum (over the natural numbers) at $d=20$, in which case 
\begin{equation*}
\sigma^2(\mathcal{S}\mu_{20})>0.87913,  \qquad {\rm with} \; \;  |\mu| =  \chi_{\mathbb{D}}.
\end{equation*}
This construction proves Theorem \ref{goal5}. One can proceed further from these infinitesimal bounds and use \eqref{better} to produce quasicircles with large dimension.  This takes us to Theorem \ref{improvement}.

\begin{proof}[Proof of Theorem \ref{improvement}]
 By the extended $\lambda$-lemma, 
 the conformal maps
$$
\varphi_t: \mathbb{D}^* \to A_{P_t}(\infty),
$$
admit quasiconformal extensions
$H_t: \C \to \C$, which depend holomorphically on $t \in \DD$. Since the Beltrami coefficient  $\mu_{H_t}$ is a holomorphic $L^\infty$-valued function of $t$,
 the vector-valued Schwarz lemma implies that
$$
\mu_{ H_t} = t\mu_0 +\mathcal{O}(t^2)
$$
for some Beltrami coefficient $|\mu_0| \leq \chi_{\mathbb{D}}$.  By developing $\varphi_t' = \partial_z   H_t$ as a Neumann series in $\mathcal{S}\mu_{H_t}$, c.f. \eqref{neumann},
we get
 $$
 \mathcal{S}\mu_0(z) = v'(z), \qquad z \in \DD^*, 
 $$
 for the infinitesimal vector field $v=\frac{d\varphi_t}{dt}\big|_{t=0}$.
  
  On the other hand, if $\mu_d$ is the Beltrami coefficient  from Lemma \ref{mu}, it follows from \eqref{better} that  $\mu_{d}^\# := -c_d \, \mu_d$ 
also   satisfies $ \mathcal{S}\mu_{d}^\#(z) = v'(z) $ in $ \DD^*$.
Then the Beltrami coefficient $\mu_0 - \mu_{d}^\# $ is infinitesimally trivial,  and by \cite[Lemma V.7.1]{lehto}, we can find quasiconformal maps $N_t$ 
which are the identity on the exterior unit disk and have dilatations 
$\mu_{N_t} = t(\mu_0 - \mu_{d}^\#) + \mathcal{O}(t^2)$, $|t| < 1$. 
Therefore, we can  replace $ H_t$ with $ H_t \circ N_t^{-1}$ to obtain an extension of $\varphi_t$ with dilatation 
\begin{equation}
\label{eq:correctedextension}
\mu_{ H_t \circ N_t^{-1}} = t \mu_{d}^\#+ \mathcal{O}(t^2)
\end{equation}
as desired. This concludes the proof. \end{proof}

\begin{remark} (i) One can show that for $d \ge 2$, the Beltrami coefficient $\mu_d^\#$ constructed in Lemma \ref{mu} is not {\em infinitesimally extremal} which implies
that the conformal maps $\varphi_t$ (with $t$ close to 0) admit even more efficient extensions (i.e.~with smaller dilatations).
 One reason
to suspect that this may be the case is that $\mu_d^\#$ is not of the form $\frac{\overline{q}}{|q|}$ for some holomorphic quadratic differential $q$ on the unit disk; however, this fact
alone is insufficient.  
It would be interesting to find the dilatation of the most efficient extension, but this may be a difficult problem. 
For more on Teichm\"uller extremality, we refer the reader to
the survey of Reich \cite{reich}.

(ii) Let $M_{\shell}$ be the class of Beltrami coefficients of the form
$$
\sum_{j = 0}^\infty \ \biggl (\frac{\overline{z}}{|z|} \biggr )^{n_j-2} \cdot \chi_{A(r_i,r_{i+1})}, \qquad 0 \le r_0 < r_1 < r_2 < \dots < 1.
$$
One can show that 
$$\Sigma^2 > \sup_{\mu \in M_{\shell}} \sigma^2(\mathcal S \mu) =   \max_{d > 1}   4 d^{\frac{2}{1-d}}  \frac{(d-1)^2}{d^2 \log d} \approx
 0.87914$$
where the maximum is taken over all {\em real} $d > 1$.
\end{remark}

\section{Fractal approximation} \label{sec:fractal}
  
In this section, 
we present an alternative route to the upper bound for the asymptotic variance of the Beurling transform using (infinitesimal) fractal approximation.
We show that in order to compute
$
\Sigma^2 = \sup_{|\mu| \le \chi_{\mathbb{D}}} \sigma^2(\mathcal S\mu),
$
it suffices to take the supremum only over certain classes of
 ``dynamical'' Beltrami coefficients $\mu$ for which
McMullen's formula holds, i.e.
\begin{equation}
\label{eq:mcm-formula}
2 \, \frac{d^2}{dt^2} \biggl |_{t=0} \Hdim \varphi_t(\mathbb{S}^1) \ = \  
\lim_{R \to 1^+} \frac{1}{2\pi |\log(R-1)|}\int_{0}^{2\pi} |v_\mu'(Re^{i\theta})|^2 d\theta
 \end{equation}
where $\varphi_t$ is the unique principal homeomorphic solution to the Beltrami equation 
$
 \overline \partial \varphi_t = t \mu \, \partial \varphi_t
$
 and $v_\mu := \frac{d\varphi_t}{dt}\big|_{t=0}$ is the associated vector field. 
 By using the principal solution, we guarantee that $v_\mu$ vanishes at infinity which
 implies that $v_\mu = \mathcal C\mu$. We will use this identity repeatedly. (In general, when $\varphi_t$ does not necessarily fix $\infty$, $v_\mu$ and $\mathcal C\mu$ may differ by a quadratic polynomial $Az^2 + Bz + C$.)
 
Consider the following classes of dynamical Beltrami coefficients, with each subsequent class being a subclass of the previous one:

\smallskip
 
 \begin{itemize}
\item $M_{\bl} = \bigcup_f M_f(\mathbb{D})$ consists of Beltrami coefficients that are \emph{eventually-invariant} under some finite
\emph{Blaschke product} $f(z) = z \prod_{i=1}^{d-1} \frac{z-a_i}{1-\overline{a_i}z}$, i.e.~Beltrami coefficients which satisfy $f^*\mu = \mu$ in some open neighbourhood of the unit circle.

\item $M_{\ei} = \bigcup_{d \ge 2} M_{\ei}(d)$
  consists of Beltrami coefficients that are {eventually-invariant} under $z \to z^d$ for some $d \ge 2$.
\item $M_{\pp} = 
\bigcup_{d \ge 2}M_{\pp}(d)$ consists  of  $\mu \in M_{\ei}$ for which
 $v_\mu$ arises as the vector field
 associated to some \emph{polynomial perturbation} of $z \to z^d$, again for some $d \ge 2$.
For details, see Section \ref{subsection:pp}.
\end{itemize}

\begin{theorem}\cite{mcmullen}
\label{thm:mcmullen61}
If $\mu$ belongs to $M_{\bl}$, then the function $t \to \Hdim \varphi_t(\mathbb{S}^1)$ is real-analytic and \eqref{eq:mcm-formula} holds.
\end{theorem}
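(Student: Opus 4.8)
The plan is to put the family into dynamical form and run Ruelle's thermodynamic formalism on the repeller $\varphi_t(\mathbb{S}^1)$, following McMullen, closing with a coboundary argument that matches the second variation of pressure with $\sigma^2(v_\mu')$.

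First I would exploit the eventual invariance. Since $f^*\mu=\mu$ on an open neighbourhood $U\supset\mathbb{S}^1$, the Beltrami coefficient of the conjugate $f_t:=\varphi_t\circ f\circ\varphi_t^{-1}$ vanishes on $\varphi_t(U)$, so $f_t$ restricts to a holomorphic, degree-$d$, uniformly expanding map on a neighbourhood of the Jordan curve $J_t:=\varphi_t(\mathbb{S}^1)$, which is therefore a forward-invariant hyperbolic repeller for $f_t$. The family $f_t$ depends holomorphically on $t\in\DD$, with $f_0=f$, $J_0=\mathbb{S}^1$, and, since we use the principal solution, $v_\mu=\mathcal C\mu$ and $v_\mu'=\mathcal S\mu$ on $\mathbb{D}^*$. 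Next, for the real-analyticity: by Bowen's formula $\Hdim J_t=\delta(t)$ is the unique zero of the pressure function $\Delta(t,\delta):=P_{f_t}\!\bigl(-\delta\log|f_t'|\bigr)$. Coding $f_t|_{J_t}$ by a subshift of finite type and realising the associated transfer operator on a space of holomorphic functions on an annular neighbourhood of $J_t$, Ruelle's theorem gives that its leading eigenvalue — simple by Perron–Frobenius — depends real-analytically on $(t,\delta)$, hence so does $\Delta$; since $\log|f_t'|\ge c>0$ by expansion we have $\partial_\delta\Delta<0$, and the implicit function theorem makes $t\mapsto\delta(t)$ real-analytic. This proves the first assertion.

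It remains to compute $\delta''(0)$. At $t=0$ the equilibrium state of $-\log|f_0'|$ is normalised Lebesgue measure $m$ on $\mathbb{S}^1$ (a finite Blaschke product preserves $m$, and $m$ is its absolutely continuous invariant measure), and $\delta(0)=1$. Since every $J_t$ is a Jordan curve, $\delta(t)\ge 1=\delta(0)$, so $t=0$ is a minimum of the real-analytic function $\delta$ and $\nabla\delta(0)=0$; in particular the first-order term vanishes. Differentiating $\Delta(t,\delta(t))\equiv 0$ twice, using that the second variation of pressure equals the \emph{dynamical asymptotic variance} $\sigma^2_{\mathrm{dyn}}(\psi):=\lim_{n\to\infty}\tfrac1n\int_{\mathbb{S}^1}\bigl|S_n\psi-n\!\int\!\psi\,dm\bigr|^2\,dm$ with respect to $m$, and discarding the first-order contribution, one obtains $\delta''(0)=\sigma^2_{\mathrm{dyn}}(\dot\psi_0)/h_m$ up to fixed normalising constants, where $\dot\psi_0:=\partial_t\log|f_t'|\big|_{t=0}$ and $h_m=\int\log|f_0'|\,dm$ is the entropy (any residual lower-order terms drop out by the normalisation and holomorphy of the family).

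The final step — which I expect to be the main obstacle — is to identify $\sigma^2_{\mathrm{dyn}}(\dot\psi_0)$ with the limit on the right-hand side of \eqref{eq:mcm-formula}, equivalently to show $2\,\delta''(0)=\sigma^2(v_\mu')$. Differentiating the conjugacy relation $\varphi_t\circ f=f_t\circ\varphi_t$ at $t=0$ gives the infinitesimal cohomological equation $\dot f_0=v_\mu\circ f-f'\,v_\mu$, from which $\dot\psi_0$ is seen to be $\tfrac12\bigl(v_\mu'\circ f-v_\mu'-(f''/f')\,v_\mu\bigr)$, a linear expression in $v_\mu'$ and $v_\mu$. Composition with $f$ does not change the dynamical variance and bounded summands are irrelevant, so $\sigma^2_{\mathrm{dyn}}(\dot\psi_0)$ reduces to a dynamical variance built from $v_\mu'$; McMullen's coboundary relation — a telescoping argument comparing the Birkhoff sums $\sum_{k<n}\dot\psi_0\circ f^k$ with the boundary growth of the Bloch function $\log|\varphi_t'|$, exploiting that $f^n$ contracts a neighbourhood of $\mathbb{S}^1$ onto $\mathbb{S}^1$ while expanding — then converts this into $\lim_{R\to1^+}\tfrac{1}{2\pi|\log(R-1)|}\int_0^{2\pi}|v_\mu'(Re^{i\theta})|^2\,d\theta$. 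Tracking the entropy normalisation, the factor $2$, and the precise coboundary is where essentially all the work lies; the analyticity and the abstract second-variation formula are routine consequences of Ruelle's theory. The resulting equalities are genuine limits rather than limsups because the Birkhoff averages converge by ergodicity of $m$, which also makes the identity consistent with Lemma \ref{lemma:sigmas}.
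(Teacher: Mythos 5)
Your proposal follows essentially the same route the paper relies on: the paper gives no independent proof but defers to McMullen's thermodynamic-formalism argument (Bowen's formula and Ruelle analyticity for real-analyticity of $t\mapsto\Hdim\varphi_t(\mathbb{S}^1)$, second variation of pressure giving the dynamical variance of the potential $\partial_t\log|F_t'\circ H_t|$, and the virtual-coboundary result, reproduced in the paper as Theorem \ref{virtual-cocycle-thm}, to identify that variance with $\sigma^2(v_\mu')$), which is exactly the outline you give. The only caution is bookkeeping in your infinitesimal step: the potential must be differentiated along the moving point (as in \eqref{eq:virtual1}--\eqref{eq:virtual}), which makes $\partial_t\psi_t|_{t=0}$ exactly the virtual coboundary $v_\mu'\circ B-v_\mu'$; your fixed-point version with the extra $(f''/f')v_\mu$ term cannot simply be discarded as a ``bounded summand,'' though this is absorbed correctly in McMullen's actual computation.
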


While McMullen did not explicitly state the relation between Hausdorff dimension and asymptotic variance for
 $M_{\bl}$, the argument in \cite{mcmullen} does apply to conjugacies 
 $\varphi_t$  
  induced by this class of coefficients. 
Note that the class of polynomial perturbations is explicitly covered in McMullen's work, see \cite[Section 5]{mcmullen}.
We show:
 \begin{theorem} 
 \label{fractal-approximation}
\begin{equation*}
\Sigma^2 = \sup_{\mu \in M_{\ei}, \ |\mu| \le \chi_{\mathbb{D}}} \sigma^2(\mathcal S\mu)  
 = \sup_{\mu \in M_{\pp},\ |\mu| \le \chi_{\mathbb{D}}} \sigma^2(\mathcal S\mu).
\end{equation*}
\end{theorem}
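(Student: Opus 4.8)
The plan is to prove the only non-trivial estimate, $\Sigma^2\le\sup\{\sigma^2(\mathcal S\mu):\mu\in M_{\pp},\ |\mu|\le\chi_{\mathbb{D}}\}$; the reverse direction, together with the fact that the middle supremum over $M_{\ei}$ is then squeezed between the two, is immediate from the inclusions $\{\mu\in M_{\pp}:|\mu|\le\chi_{\mathbb{D}}\}\subseteq\{\mu\in M_{\ei}:|\mu|\le\chi_{\mathbb{D}}\}\subseteq\{|\mu|\le\chi_{\mathbb{D}}\}$. So I would fix a near-extremal $\mu$ with $|\mu|\le\chi_{\mathbb{D}}$ and $\sigma^2(\mathcal S\mu)>\Sigma^2-\varepsilon$ and build a polynomial-perturbation coefficient $\nu\in M_{\pp}$ with $|\nu|\le\chi_{\mathbb{D}}$ and $\sigma^2(\mathcal S\nu)>\sigma^2(\mathcal S\mu)-\varepsilon$.

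The construction is a \emph{spreading} (renormalisation). First I would form a fundamental block from $\mu$: apply the Fej\'er mean of order $N$ in the angular variable $\arg w$ (which does not increase $\|\cdot\|_\infty$, the Fej\'er kernel being a probability kernel) and then multiply by $\chi_{A(\rho,\rho^{1/d})}$, obtaining $\mu^\flat$ with $|\mu^\flat|\le\chi_{\mathbb{D}}$ that is angularly band-limited to $[0,N]$ and supported on $A(\rho,\rho^{1/d})$; here $d>N+1$ is a large integer and $\rho\in(0,1)$. Setting $r_j:=\rho^{1/d^j}$, the annuli $A_j:=A(r_j,r_{j+1})$ tile $A(\rho,1)$ and $z\mapsto z^d$ carries $A_{j+1}$ onto $A_j$, so I let $\nu$ equal $\mu^\flat$ on $A_0=A(\rho,\rho^{1/d})$, extend it to $A(\rho^{1/d},1)$ by $(z^d)^*\nu=\nu$, and set $\nu\equiv0$ on $\{|z|\le\rho\}$. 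Then $\nu$ is eventually $z^d$-invariant with $|\nu|\le\chi_{\mathbb{D}}$ (the pullback preserves moduli and the $A_j$ are disjoint), so $\nu\in M_{\ei}(d)$. To see it even lies in $M_{\pp}(d)$: the Beurling and Cauchy transforms only see the angular frequencies appearing as $(\overline w/|w|)^m$ with $m\ge0$ (by the Laurent expansion of $(z-w)^{-2}$), so band-limitedness of $\mu^\flat$ together with the computation in Lemma \ref{lemma:basic-coefficient} shows $\mathcal C(\nu\chi_{A_0})$ is a Laurent polynomial of degree $<d$ in $1/z$; since $\nu=(z^d)^*\nu+\nu\chi_{A_0}$, Lemma \ref{pullback-coefficients} gives $v_\nu=\tfrac{1}{dz^{d-1}}\bigl(v_\nu(z^d)-v_\nu(0)\bigr)+\mathcal C(\nu\chi_{A_0})$, so the defect $Q$ in $v_\nu(z^d)=dz^{d-1}v_\nu(z)+Q(z)$ is an honest polynomial of degree $<d$ and $v_\nu$ is the vector field of the polynomial perturbation $P_t=z^d+tQ(z)$; thus $\nu\in M_{\pp}(d)$.

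The heart of the argument is to control $\sigma^2(\mathcal S\nu)$ and drive it to $\sigma^2(\mathcal S\mu)$. Since $\nu\in M_{\bl}$, McMullen's Theorem \ref{thm:mcmullen61} applies (as in the argument of \cite{mcmullen}), so the limit in \eqref{eq:mcm-formula} exists; by the self-similarity of $\nu$ under $z\mapsto z^d$ together with the coboundary techniques of \cite{mcmullen}, this limit is a $\tfrac1{\log d}$-weighted average of the Ces\`aro integrand over the single block $\mu^\flat$. Now I would let $d\to\infty$: the fundamental annulus $A(\rho,\rho^{1/d})$ exhausts $A(\rho,1)$, and taking $N=N(d)\to\infty$ forces $\mu^\flat\to\mu\chi_{A(\rho,1)}$ (replacing $\mu$ by $\mu\chi_{A(\rho,1)}$ is harmless, since $\mathcal S(\mu\chi_{\{|z|\le\rho\}})$ is bounded, hence of zero asymptotic variance, near $\mathbb{S}^1$). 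Writing $\sigma^2(\mathcal S\mu)$ in the Ces\`aro form of Lemma \ref{lemma:sigmas} — a logarithmic average of a bounded quantity — and choosing $d$ so that the window length $\log\tfrac1{1-\rho}+\log d$ runs along the subsequence of scales realising the $\limsup$ defining $\sigma^2(\mathcal S\mu)$, the block average converges to $\sigma^2(\mathcal S\mu)$. Hence $\sup_{M_{\pp}}\sigma^2(\mathcal S\mu)\ge\sigma^2(\mathcal S\nu)>\sigma^2(\mathcal S\mu)-\varepsilon>\Sigma^2-2\varepsilon$, and $\varepsilon\downarrow0$ completes the proof.

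The hard part is precisely the variance identity invoked in the last step: one must show rigorously that the self-similar repetition of a single block recovers the full asymptotic variance of $\mu$ in the limit — equivalently, that the difference between $\mathcal S\nu$ and the naive superposition of the infinitely many rescaled copies of the block is a coboundary, hence invisible to $\sigma^2$. The Ces\`aro averages of Lemma \ref{lemma:sigmas} are the natural tool here, being exactly the quantities unaffected by coboundary terms; the remaining delicacy is to coordinate the three limits $d\to\infty$, $N=N(d)\to\infty$ and the scale subsequence $\tau\to\infty$ so that the angular band-limiting never costs more than $\varepsilon$ in the asymptotic variance.
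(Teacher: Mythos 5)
Your structural claims are mostly sound: the periodised, Fej\'er--band-limited block $\nu$ is eventually $z^d$-invariant, and since $v_\nu=\mathcal C\nu$ vanishes at infinity and satisfies $v_\nu(z^d)=dz^{d-1}v_\nu(z)+Q(z)$ with $Q$ a polynomial of degree $\le d-2$, a one-line uniqueness argument (any two solutions of this functional equation vanishing at infinity coincide) shows $v_\nu\in\Vpp(d)$; in particular the possibly nonzero value $v_\nu(0)$ -- which the Fej\'er mean does \emph{not} kill, since it sees the angular frequency $(\overline w/|w|)^{-1}$ -- is harmlessly absorbed into $Q$, a point you should make explicit. The genuine gap is elsewhere: the whole content of the theorem is the estimate $\sigma^2(\mathcal S\nu)\ge\sigma^2(\mathcal S\mu)-\varepsilon$, and this you do not prove -- you assert that ``self-similarity plus the coboundary techniques of McMullen'' identify $\sigma^2(\mathcal S\nu)$ with a block average of the Ces\`aro integrand of $\mu$, and then you yourself flag exactly this identification as ``the hard part''. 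The coboundary machinery (Theorem \ref{virtual-cocycle-thm} and Section \ref{sec:dynamics}) relates the \emph{dynamical} variance of an invariant potential to $\sigma^2$ of the associated Bloch function; it gives no comparison between $\sigma^2(\mathcal S\nu)$ and $\sigma^2(\mathcal S\mu)$ for the original, non-invariant $\mu$. The paper's proof consists precisely of that comparison: one passes to $\sigma^2_4$ via Lemma \ref{lemma:sigmas} so as to work with $v'''/\rho_*^2$, uses the localisation estimate of Lemma \ref{qbounds}$(b)$ to see that on the fundamental annulus, off a periphery of bounded hyperbolic width $L$, $v'''_{\nu}/\rho_*^2$ and $v'''_{\mu}/\rho_*^2$ differ by $O(e^{-L})$, uses the isoperimetric feature of the measure $\rho_*(z)dm$ (periphery has relative mass $O(L/\log d)$) to make the periphery negligible as $d\to\infty$, and uses Lemma \ref{qd-almostinvariant2} to transport the block average to all subsequent annuli. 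Without these estimates, or a substitute for them, your outline restates the theorem rather than proving it.

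The Fej\'er step also needs a real argument. ``Taking $N=N(d)\to\infty$ forces $\mu^\flat\to\mu\chi_{A(\rho,1)}$'' is convergence in an a.e./weak sense under which $\sigma^2\circ\mathcal S$ is emphatically not continuous (radial truncations $\mu\chi_{A(0,r)}$ converge a.e.\ to $\mu$, yet each has bounded Beurling transform near $\mathbb S^1$ and hence zero asymptotic variance). What would save you is quantitative: on $|z|=R$ the transform of the mollified coefficient is the triangular Fej\'er multiplier applied to the Laurent coefficients of $\mathcal S\mu$, and the Bloch bound gives $\sum_{j>A/(R-1)}|a_j|^2R^{-2j}=O(A^{-2})$, so the damage is $o\bigl(\log\frac1{R-1}\bigr)$ at radii with $R-1\gg 1/N$, while the scales with $R-1\lesssim 1/N$ occupy a negligible fraction of the $\log d$-long Ces\`aro window. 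You would have to write out this spectral estimate and coordinate it with the localisation above; the paper sidesteps it entirely by truncating the Laurent tail with an $O(\varepsilon)$ perturbation in $L^\infty$ built from the explicit coefficients of Lemma \ref{lemma:basic-coefficient} (Lemma \ref{new-truncation-lemma}) and then regrouping shells to degree $d^M$ to meet the constraint $\deg Q\le d-2$, rather than mollifying $\mu$ in the angular variable.
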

In view of Theorem \ref{thm:mcmullen61}, the first equality in Theorem \ref{fractal-approximation} is sufficient to deduce Theorem \ref{thm:sigmaanddimension}.
With a bit more work, the second equality 
also gives the following consequence:

\begin{corollary} \label{cor:Juliaapprox}
For any $\varepsilon > 0$, there exists a family of polynomials
 \[
 z^d + t(a_{d-2} z^{d-2} + a_{d-3}z^{d-3} +  \dots + a_{0}), \qquad t \in (-\varepsilon_0, \varepsilon_0),
 \]
such that each Julia set $\mathcal J_t$ is a $k(t)$-quasicircle with $$\Hdim(\mathcal J_t) \ge 1 + (\Sigma^2 - \varepsilon)k(t)^2.$$
\end{corollary}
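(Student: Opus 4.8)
The plan is to combine the polynomial‑perturbation half of Theorem~\ref{fractal-approximation} with McMullen's Theorem~\ref{thm:mcmullen61} and the antisymmetrisation trick from Section~\ref{subsec:quasicircles}. Given $\varepsilon > 0$, I would first invoke the equality $\Sigma^2 = \sup_{\mu \in M_{\pp},\,|\mu|\le\chi_{\mathbb{D}}}\sigma^2(\mathcal S\mu)$ to pick a single Beltrami coefficient $\mu \in M_{\pp}$ with $|\mu| \le \chi_{\mathbb{D}}$ and $\sigma^2(\mathcal S\mu) > \Sigma^2 - \varepsilon/2$. By the definition of the class $M_{\pp}$, made explicit in Section~\ref{subsection:pp}, the vector field $v_\mu = \mathcal C\mu$ is precisely the infinitesimal vector field of some family $P_t(z) = z^d + t(a_{d-2}z^{d-2} + \dots + a_0)$; that is, the B\"ottcher conjugacy $\varphi_t\colon\mathbb{D}^*\to A_{P_t}(\infty)$ — which on $\mathbb{D}^*$ coincides with the principal homeomorphic solution of $\overline\partial\varphi_t = t\mu\,\partial\varphi_t$ — satisfies $v_\mu = \frac{d\varphi_t}{dt}\big|_{t=0}$.

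Next I would establish the three facts needed about this family for $|t|$ small. (i) As in Section~\ref{se:lowerbound}, all finite critical points of $P_t$ lie in the immediate basin of the attracting fixed point near $0$, so $P_t$ is hyperbolic and $\mathcal J_t := \varphi_t(\mathbb{S}^1)$ is a quasicircle. (ii) Feeding the $|t|$‑quasiconformal extension of $\varphi_t$ supplied by the $\lambda$‑lemma into the symmetrisation procedure of Section~\ref{subsec:quasicircles} shows that $\mathcal J_t$ is a $k(t)$‑quasicircle with $k(t) \le \frac{1-\sqrt{1-|t|^2}}{|t|} = \frac{|t|}{2} + \mathcal O(|t|^3)$; the constant $\frac12$ here is exactly what the antisymmetric representation buys us, and it is essential. (iii) Since $z\mapsto z^d$ is a Blaschke product, $M_{\pp}\subset M_{\bl}$, so Theorem~\ref{thm:mcmullen61} applies: $g(t) := \Hdim\mathcal J_t$ is real‑analytic near $0$, $g(0) = 1$, and because $g(t)\ge 1$ always we get $g'(0) = 0$; moreover McMullen's identity \eqref{eq:mcm-formula}, together with $v_\mu' = \mathcal S\mu$ on $\mathbb{D}^*$, gives $2g''(0) = \sigma^2(\mathcal S\mu)$. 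Taylor expansion then yields $\Hdim\mathcal J_t = 1 + \frac{1}{4}\sigma^2(\mathcal S\mu)\,t^2 + o(t^2)$ as $t\to 0$.

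To conclude I would compare the two expansions. From (ii), $(\Sigma^2-\varepsilon)k(t)^2 \le \frac{1}{4}(\Sigma^2-\varepsilon)t^2 + o(t^2)$, whence
\[ \bigl(\Hdim\mathcal J_t - 1\bigr) - (\Sigma^2-\varepsilon)k(t)^2 \ \ge\ \frac{1}{4}\bigl[\sigma^2(\mathcal S\mu) - (\Sigma^2-\varepsilon)\bigr]\,t^2 + o(t^2). \]
Since $\sigma^2(\mathcal S\mu) - (\Sigma^2-\varepsilon) > \varepsilon/2 > 0$, the right‑hand side is $\ge \frac{\varepsilon}{8}t^2 + o(t^2) \ge 0$ for all real $t$ with $|t| < \varepsilon_0$, provided $\varepsilon_0$ is chosen small enough (also small enough for (i) and (ii) to hold). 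For such $t$ we obtain $\Hdim\mathcal J_t \ge 1 + (\Sigma^2-\varepsilon)k(t)^2$, which is the assertion.

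The main obstacle is the bookkeeping hidden in the first step: turning an abstract $\mu\in M_{\pp}$ into an honest polynomial family and verifying that the dynamical conjugacy $\varphi_t$ is the exterior restriction of the principal Beltrami solution, so that McMullen's formula \eqref{eq:mcm-formula} applies without modification — this is what Section~\ref{subsection:pp} is for. The secondary point, but one that cannot be skipped, is controlling the quasiconformality of $\mathcal J_t$ by $|t|/2$ rather than merely $\mathcal O(|t|)$: this is precisely what makes the quadratic‑in‑$t$ growth of the dimension translate into quadratic‑in‑$k(t)$ growth with the correct leading coefficient, and it relies on the antisymmetrisation of Section~\ref{subsec:quasicircles}. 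The remaining estimates are routine Taylor expansions.
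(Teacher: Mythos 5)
There is a genuine gap, and it sits exactly where your proposal leans on the $\lambda$-lemma. The polynomial family $P_t(z)=z^d+tQ(z)$ produced from a coefficient $\mu\in M_{\pp}$ is only a holomorphic motion for $t$ in a small disk $B(0,\varepsilon_0)$, where $\varepsilon_0$ depends on $Q$ (hence on your choice of $\mu$, hence on $\varepsilon$): outside that range the Julia set need not even be a Jordan curve. Consequently Slodkowski's theorem does \emph{not} supply a $|t|$-quasiconformal extension of $\varphi_t$; after rescaling the parameter it only gives a $(|t|/\varepsilon_0)$-quasiconformal one, and symmetrisation then yields roughly $k(t)\le |t|/(2\varepsilon_0)$, which is useless for the desired inequality since $\varepsilon_0$ may be tiny and $\sigma^2(\mathcal S\mu)\le\Sigma^2$. (The clean bound ``$|t|$-qc from the $\lambda$-lemma'' is available for $z^d+tz$ in Theorem \ref{julia} precisely because that family is a motion over the full unit disk; it is not available here.) The paper closes this gap with the step you omit: writing $\mu_{H_t}=t\mu_0+\mathcal O(t^2)$ for the $\lambda$-lemma extension, noting via \eqref{eq:v=C} that $\mu_0-\mu$ is infinitesimally trivial, and correcting $H_t$ by Teichm\"uller-trivial deformations $N_t^{-1}$ (Lehto, Lemma V.7.1) as in \eqref{eq:correctedextension}, so that $\mu_{H_t\circ N_t^{-1}}=t\mu+\mathcal O(t^2)$ with $\|\mu\|_\infty\le 1$; only then does symmetrisation give $k(t)=|t|/2+\mathcal O(t^2)$. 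This correction is the essential content of the corollary's proof (and of Theorem \ref{improvement}), not routine bookkeeping.

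A secondary but related problem is your assertion that the B\"ottcher conjugacy $\varphi_t$ coincides on $\mathbb{D}^*$ with the principal solution of $\overline\partial\varphi_t=t\mu\,\partial\varphi_t$. These two families agree only to first order in $t$ (that is exactly what \eqref{eq:v=C} says); for finite $t$ the dilatation of any extension of the conjugacy is $t\mu+\mathcal O(t^2)$, not $t\mu$. Hence Theorem \ref{thm:mcmullen61}, which concerns the curves $w^{t\mu}(\mathbb{S}^1)$ for principal solutions, does not directly give the expansion of $\Hdim\mathcal J_t$ for the Julia sets. This part is repairable: one should instead invoke McMullen's treatment of polynomial perturbations (the reason the class $M_{\pp}$ and Lemma \ref{lemma:polynomials} are introduced), which gives $\Hdim\mathcal J_t=1+\sigma^2(v_\mu')\,|t|^2/4+\mathcal O(|t|^3)$ as in \eqref{eq:dimjulia}; with that citation your Taylor comparison at the end is fine. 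But without the Lehto correction step the bound $k(t)\le |t|/2+o(|t|)$ for the actual Julia sets is unproved, so the proposal as written does not establish the corollary.
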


\subsection{Bounds on quadratic differentials}
To prove Theorem \ref{fractal-approximation}, we work with the integral average $\sigma^2_4$ rather than with $\sigma^2$. The reason for shifting the point
of view is due to the fact that the
pointwise estimates for
 \begin{equation}
\label{eq:bd2}
 v_\mu'''(z) = - \frac{6}{\pi} \int_{\mathbb{D}} \frac{\mu(w)}{(w-z)^4} dm(w)
 \end{equation}
are more useful than the pointwise estimates for $v'$, as we saw in Section \ref{section:background} when we invoked Hardy's identity.  
According to Lemma \ref{lemma:sigmas},
   \begin{equation}
\label{eq:mcm-formula2}
\sigma^2(v_\mu') = \sigma_4^2(v_\mu') =\ \frac{8}{3} \, \limsup_{R \to 1^+} \fint_{A(R,2)}  \biggl |\frac{v'''_\mu}{\rho_*^2} (z) \biggr |^2 \, \rho_*(z)dm
\end{equation}
where $\rho_*(z)=2/(|z|^2-1)$ is the density of the hyperbolic metric on $\mathbb{D}^*$
and $\fint f(z) \, \rho_*(z)dm$ denotes  the integral average with respect to the measure $\rho_*(z)dm$.
(Note that we are not taking the average with respect to the hyperbolic area $\rho_*^2(z)dm$.)

We will need two estimates for $v'''/\rho_*^2$.
To state these estimates, we introduce some notation.
For a set $E \subset \mathbb{C}$, let $E^*$ denote its reflection in the unit circle.
The hyperbolic distance between $z_1,z_2 \in \mathbb{D}^*$ is denoted by $d_{\mathbb{D}^*}(z_1,z_2)$.
The following lemma is based on ideas from \cite[Section 2]{mcm-cusps} and appears explicitly in \cite[Section 2]{ivrii-phd}:
\begin{lemma} 
\label{qbounds}
Suppose $\mu$ is a measurable Beltrami coefficient with $|\mu| \le \chi_{\mathbb{D}}$ and $v'''$ is given by
(\ref{eq:bd2}). Then,
 \begin{enumerate}
\item[$(a)$] $|v'''/\rho_*^2| \le 3/2$ for $z \in \mathbb{D}^*$.
\item[$(b)$] If $d_{\mathbb{D}^*}(z, \supp(\mu)^*) \ge L$, then $|(v'''/\rho_*^2)(z)| \leqslant C e^{-L}$, for some constant $C>0$.
\end{enumerate}
\end{lemma}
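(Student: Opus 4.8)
Since $|\mu|\le\chi_{\mathbb{D}}$, formula \eqref{eq:bd2} immediately gives $|v'''(z)|\le \frac{6}{\pi}\int_{\mathbb{D}}\frac{dm(w)}{|w-z|^4}$, so it suffices to prove the exact identity $\frac1\pi\int_{\mathbb{D}}\frac{dm(w)}{|w-z|^4}=\frac{1}{(|z|^2-1)^2}$ for $|z|>1$; then $|v'''/\rho_*^2|=\frac{(|z|^2-1)^2}{4}|v'''|\le\frac32$. I would compute the integral directly: the substitution $w=zu$ reduces it to $\frac{1}{|z|^2}\int_{|u|<1/|z|}\frac{dm(u)}{|1-u|^4}$; passing to polar coordinates and using $\int_0^{2\pi}\frac{d\theta}{(1+r^2-2r\cos\theta)^2}=\frac{2\pi(1+r^2)}{(1-r^2)^3}$ followed by the substitution $x=r^2$ yields $\int_{|u|<s}\frac{dm(u)}{|1-u|^4}=\frac{\pi s^2}{(1-s^2)^2}$, and with $s=1/|z|$ this collapses to the claimed value. (Alternatively one may simply cite this standard Bergman-kernel-type computation, which is where the bound comes from in \cite{mcm-cusps, ivrii-phd}.)

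\textbf{Part (b): the exponential decay.} The plan is to translate the Euclidean factor $|w-z|^{-4}$ into hyperbolic language via the reflection $\iota(w)=1/\overline{w}$, which is an isometry of $(\mathbb{D},\mathrm{hyp})$ onto $(\mathbb{D}^*,\mathrm{hyp})$ and carries $\supp\mu$ onto $\supp(\mu)^*$. First I would establish the identity
\[
|w-z|^2=(1-|w|^2)(|z|^2-1)\,\cosh^2\!\bigl(\tfrac12 d_{\mathbb{D}^*}(z,\iota(w))\bigr),\qquad w\in\mathbb{D},\ z\in\mathbb{D}^*,
\]
which follows by combining the distance formula $\tanh\bigl(\tfrac12 d_{\mathbb{D}^*}(z,\iota(w))\bigr)=|1-\overline{w}z|/|w-z|$ (the unit-disk formula pulled through the isometry $\zeta\mapsto 1/\zeta$) with the elementary identity $|w-z|^2-|1-\overline{w}z|^2=(1-|w|^2)(|z|^2-1)$. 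Hence, if $d_{\mathbb{D}^*}(z,\supp(\mu)^*)\ge L$, then for every $w\in\supp\mu$ one has $|w-z|^2\ge\tfrac14(1-|w|^2)(|z|^2-1)\,e^{\,d_{\mathbb{D}^*}(z,\iota(w))}$, using $\cosh^2(t/2)\ge\tfrac14 e^t$.

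Next I would decompose $\supp\mu$ into the hyperbolic shells $S_j=\{w\in\supp\mu:\ L+j\le d_{\mathbb{D}^*}(z,\iota(w))<L+j+1\}$, $j\ge0$. On $S_j$ the previous bound gives $|w-z|^{-4}\le 16\,e^{-2(L+j)}\bigl((1-|w|^2)(|z|^2-1)\bigr)^{-2}$. Since $\iota(S_j)$ is contained in the hyperbolic disk of radius $L+j+1$ about $z$, whose hyperbolic area equals $4\pi\sinh^2(\tfrac{L+j+1}{2})\le\pi e^{\,L+j+1}$, and since $\frac{dm(w)}{(1-|w|^2)^2}$ is one quarter of the hyperbolic area element, we obtain $\int_{S_j}\frac{dm(w)}{|w-z|^4}\le \frac{4\pi e}{(|z|^2-1)^2}\,e^{-(L+j)}$. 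Summing the geometric series over $j\ge0$ gives $\int_{\mathbb{D}}\frac{dm(w)}{|w-z|^4}\le \frac{C\,e^{-L}}{(|z|^2-1)^2}$, and therefore $|v'''/\rho_*^2|=\frac{(|z|^2-1)^2}{4}|v'''|\le C'e^{-L}$.

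\textbf{Main obstacle.} The delicate step is part (b): the naive estimate $|w-z|\gtrsim\dist$ is useless, because the hyperbolic area of $\supp\mu$ near $\mathbb{S}^1$ may be infinite, so a single bound on $|w-z|^{-4}$ cannot be integrated. One must exploit the \emph{exponential} gain in the bound for $|w-z|$ at hyperbolic distance $L+j$ against the \emph{exponential} growth of the hyperbolic area of the $j$-th shell, so that the series converges with a factor $e^{-L}$ to spare. The clean identity relating the mixed Euclidean distance $|w-z|$ — one point inside, one outside the unit circle — to a single hyperbolic distance after reflection is exactly what makes this bookkeeping go through.
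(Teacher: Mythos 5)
Your argument is correct, but it takes a genuinely different route from the paper's. The paper first records the M\"obius transformation rule $v_\mu'''(\gamma(z))\,\gamma'(z)^2 = v_{\gamma^*\mu}'''(z)$, notes that both assertions of the lemma are invariant under automorphisms of $\mathbb{D}^*$, and then verifies them only at the single point $z=\infty$, where everything is trivial: $\lim_{z\to\infty}|v_\mu'''/\rho_*^2|(z)=\tfrac{3}{2\pi}\bigl|\int_{\mathbb{D}}\mu\,dm\bigr|\le\tfrac32$ gives $(a)$, and for $(b)$ the hypothesis $d_{\mathbb{D}^*}(\infty,\supp(\mu)^*)\ge L$ forces $\supp\mu$ into an annulus $\{1-Ce^{-L}<|w|<1\}$ of Euclidean area $\mathcal O(e^{-L})$. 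You instead estimate directly at an arbitrary point: for $(a)$ via the exact identity $\tfrac1\pi\int_{\mathbb{D}}|w-z|^{-4}\,dm(w)=(|z|^2-1)^{-2}$ (your computation checks out and recovers the sharp constant $3/2$), and for $(b)$ via the identity $|w-z|^2=(1-|w|^2)(|z|^2-1)\cosh^2\bigl(\tfrac12 d_{\mathbb{D}^*}(z,\iota(w))\bigr)$ combined with a hyperbolic shell decomposition, trading the gain $e^{-2(L+j)}$ in the kernel against the growth $\mathcal O(e^{L+j})$ of the shell areas; I verified the distance formula, the area bookkeeping, and the normalisation (curvature $-1$, $\rho_*=2/(|z|^2-1)$, consistent with the paper), and the geometric series indeed leaves a spare factor $e^{-L}$. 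What the paper's proof buys is brevity and conceptual clarity, since the invariance reduces all estimation to the point at infinity; what yours buys is explicit constants valid pointwise and no change of variables, at the cost of two computational lemmas. Your closing remark correctly identifies why a naive distance bound cannot work (the hyperbolic area of $\supp\mu$ near $\mathbb{S}^1$ may be infinite), and your exponential bookkeeping is exactly the right fix.
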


\begin{proof}
A simple computation shows that if $\gamma$ is a M\"obius transformation, then
\begin{equation}
\frac{\gamma'(z_1) \gamma'(z_2)}{(\gamma(z_1)-\gamma(z_2))^2} = \frac{1}{(z_1-z_2)^2}, \qquad \text{for }z_1 \ne z_2 \in \mathbb{C}.
\end{equation}
The above identity and a change of variables shows that 
\begin{equation}
\label{eq:mobiusinvariance}
v_\mu'''(\gamma(z)) \cdot \gamma'(z)^2  = v_{\gamma^*\mu}'''(z),
\end{equation}
analogous to the transformation rule of a quadratic differential.

In view of the M\"obius invariance, it suffices to prove the assertions of the lemma at infinity. 
From (\ref{eq:bd2}), one has 
\begin{equation*}
\label{eq:v3infinity}
\lim_{z \to \infty} \, \biggl |\frac{v_\mu'''}{\rho_*^2}(z) \biggr |=  \frac{3}{2\pi} \left| \int_{\mathbb D} \mu(w) dm(w) \right|,
\end{equation*}
which gives $(a)$. 
For $(b)$, recall that $d_{\mathbb{D}^*}(\infty,z) = -\log(|z|-1)+ \mathcal O(1)$ for $|z| < 2$. Then,
 \[
\lim_{z \to \infty} \, \biggl |\frac{v_\mu'''}{\rho_*^2}(z) \biggr | \le \frac{3}{2\pi} \int_{\{1- Ce^{-L} < |w| < 1\}} dm(w) = \mathcal O(e^{-L})
 \]
 as desired.
\end{proof}

\begin{remark}
Loosely speaking, part $(b)$ of Lemma \ref{qbounds} says that to determine the value of $v_\mu'''/\rho^2_*$ at a point $z \in \mathbb{D}^*$,
one needs to know the values of $\mu$ in a neighbourhood of $z^*$. More precisely, for any $\varepsilon > 0$, one may choose $L > 0$ 
sufficiently large to ensure that the contribution of the values of $\mu$ outside
$\{w : d_{\mathbb{D}}(z^*, w) < L\}$ to $(v_\mu'''/\rho^2_*)(z)$ is less than $\varepsilon$.
In particular, if $\mu_1$ and $\mu_2$ are two Beltrami coefficients, supported on the unit disk and bounded by 1 that agree on
$\{w : d_{\mathbb{D}}(z^*, w) < L\}$, then $\bigl | (v_{\mu_1}'''/\rho^2_*)(z) - (v_{\mu_2}'''/\rho^2_*)(z)  \bigr | < 2\varepsilon.$
This simple {\em localisation principle} will serve as the foundation for the arguments in this section.
\end{remark}

\begin{lemma}
\label{qd-almostinvariant2}
 Given an $\varepsilon > 0$, there exists an $1 < R(\varepsilon) < \infty$, so that if
 $1<|z|^d < R(\varepsilon)$, then
\begin{equation}
\biggl |z^2 \frac{v_{(z^d)^*\mu}'''}{\rho_*^2}(z) - z^{2d}\frac{v_{\mu}'''}{\rho_*^2}(z^d) \biggr | < \varepsilon.
\end{equation}
Note that $R(\varepsilon)$ is independent of the degree $d \ge 2$.
\end{lemma}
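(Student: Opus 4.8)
The plan is to derive Lemma~\ref{qd-almostinvariant2} from the M\"obius-covariance formula \eqref{eq:mobiusinvariance} of Lemma~\ref{qbounds} together with the localisation principle in the remark following it. The underlying observation is that the map $f(z)=z^d$ is \emph{conformal} on the annulus $\{1<|z|<R^{1/d}\}$, and on the unit circle it is an expanding $d$-to-$1$ covering. So while $f$ is not a M\"obius transformation, it acts like one \emph{locally}: near a point $z$ with $|z|^d<R(\varepsilon)$, the branch of $f$ mapping a small hyperbolic neighbourhood of $z^*$ (in $\mathbb{D}$) onto a neighbourhood of $(z^d)^*$ can be compared, up to an error controlled by $R(\varepsilon)-1$, with a genuine M\"obius map, after which \eqref{eq:mobiusinvariance} applies verbatim.

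Concretely, I would proceed as follows. Fix $\varepsilon>0$ and let $L=L(\varepsilon)$ be the constant from the localisation remark, so that $v'''_\nu/\rho_*^2$ at a point $w\in\mathbb{D}^*$ only depends, up to $\varepsilon$-error, on the values of $\nu$ on the hyperbolic ball $B_{\mathbb{D}}(w^*,L)$. First I would note the identity
\[
\bigl((z^d)^*\mu\bigr)(z) = \mu(z^d)\,\frac{\overline{dz^{d-1}}}{dz^{d-1}},
\]
so that on the relevant annulus $(z^d)^*\mu$ is, via the conformal branch $\psi$ of the $d$-th root, the pullback $\psi^*\mu$ of $\mu$ by $\psi$; if $\psi$ were M\"obius, \eqref{eq:mobiusinvariance} would give exactly
\[
z^2\,\frac{v'''_{(z^d)^*\mu}}{\rho_*^2}(z) = z^{2d}\,\frac{v'''_{\mu}}{\rho_*^2}(z^d).
\]
Then I would quantify the failure of $\psi$ to be M\"obius on $B_{\mathbb{D}^*}(z,L)$: since $f(z)=z^d$ is conformal on $\{1<|z|^d<R\}$ and $\psi$ fixes behaviour on the circle $\{|z|=1\}$ well, the Schwarzian (or simply the distortion of $\psi$ relative to the best-fitting M\"obius map) on $B_{\mathbb{D}^*}(z,L)$ tends to $0$ as $R\to 1^+$, uniformly in $d$ because the hyperbolic size of the neighbourhood in question is bounded by $L$ independent of $d$ and the circle is a common fixed locus. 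Choosing $R(\varepsilon)$ small enough that this distortion is below a threshold depending on $L$ and $\varepsilon$, and applying the localisation principle twice (once to replace $(z^d)^*\mu$ by the M\"obius-pullback of $\mu$ that agrees with it on $B_{\mathbb{D}}(z^*,L)$, and once to absorb the resulting change in $v'''/\rho_*^2$), gives the claimed bound. The uniformity in $d$ is automatic from this argument since $L$, and the estimate on $R(\varepsilon)$ derived from it, never referenced $d$.

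The main obstacle, I expect, is making precise and quantitative the statement ``$\psi=z^{1/d}$ is close to M\"obius on a hyperbolic ball of radius $L$ near the unit circle, uniformly in $d$''. The cleanest way is probably not to estimate Schwarzians directly but to exploit that the relevant quantity $v'''/\rho_*^2$ transforms as a quadratic differential and is uniformly bounded and uniformly continuous (in the hyperbolic metric) by Lemma~\ref{qbounds}(a): one then only needs that $\psi$ maps $B_{\mathbb{D}^*}(z,L)$ into a set of controlled hyperbolic diameter and moves the relevant points by a controlled hyperbolic distance — both of which follow from conformality of $z^d$ on the thin annulus plus a Koebe-type estimate, with constants depending only on $L$. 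That reduces everything to a compactness/normal-families argument rather than an explicit computation, and the degree-independence falls out because the hyperbolic geometry near the fixed circle does not see $d$.
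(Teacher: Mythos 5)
The weak point is the step where you ``replace $(z^d)^*\mu$ by the M\"obius-pullback of $\mu$ that agrees with it on $B_{\mathbb{D}}(z^*,L)$''. The localisation principle following Lemma \ref{qbounds} requires the two Beltrami coefficients to coincide almost everywhere on the hyperbolic ball; but for a merely measurable $\mu$ there is no M\"obius map $\gamma$ with $\gamma^*\mu=(z^d)^*\mu$ on a set of positive measure: $\gamma$ and $z\mapsto z^d$ differ on every open set, so $\mu\circ\gamma$ and $\mu(z^d)$ can differ a.e.\ (take $\mu$ highly oscillatory), and the rotation factors $\overline{\gamma'}/\gamma'$ and $\overline{f'}/f'$ differ as well. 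Pointwise (even $C^1$) closeness of the maps gives closeness of the pullbacks only in the weak-$*$ sense, not in $L^\infty$, so localisation cannot absorb the discrepancy; to exploit weak-$*$ closeness you would need a quantitative continuity estimate for $\nu\mapsto v_\nu'''/\rho_*^2$ under perturbation of the map used in the pullback, uniform in $\mu$ and in $d$ --- and such an estimate is essentially the content of the lemma itself, so the argument as proposed is circular at its core. Your fallback (uniform hyperbolic Lipschitz continuity of $v'''/\rho_*^2$, Koebe, compactness) does not repair this: uniform continuity of $v_\nu'''/\rho_*^2$ for each \emph{fixed} coefficient never relates the two \emph{different} coefficients $(z^d)^*\mu$ and $\mu$; some exact covariance statement for $z\mapsto z^d$ is indispensable, and nothing in your outline supplies one. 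Moreover, a compactness/normal-families argument would have to produce constants uniform in both $d\ge 2$ and all $|\mu|\le\chi_{\mathbb{D}}$, which is not sketched.

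The missing ingredient is available and is precisely what the paper uses: the exact global identity of Lemma \ref{pullback-coefficients}, $\mathcal C\bigl((z^d)^*\mu\bigr)(z)=\frac{1}{dz^{d-1}}\bigl\{\mathcal C\mu(z^d)-\mathcal C\mu(0)\bigr\}$, which ties $v_{(z^d)^*\mu}$ to $v_\mu\circ(z^d)$ with no error at all. Differentiating it three times gives $v'''_{(z^d)^*\mu}(z)=d^2z^{2d-2}\,v'''_\mu(z^d)+(\text{lower-order terms})$, the lower-order terms are $o(\rho_*^2)$ by Bloch-type bounds on $v_\mu'$ and $v_\mu''$, and the remaining normalising factor is handled by the convergence $(1/d)\,\rho_*(z)/\rho_*(z^d)\to 1$ as $|z|^d\to 1^+$, which is uniform in $d$. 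The paper also remarks that a Koebe-type distortion theorem for circle-preserving maps can replace the last step --- this is close in spirit to your geometric intuition about $z^d$ being nearly a hyperbolic isometry near the circle --- but even that route passes through the exact pullback identity; the M\"obius covariance \eqref{eq:mobiusinvariance} together with localisation alone cannot.
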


\begin{proof}
Differentiating \eqref{eq:pullback1} three times yields 
\[\left| d^2 z^{2d-2}v_{\mu}'''(z^d)-v_{(z^d)^*\mu}'''(z) \right| \leqslant 2 d^2 |z|^{-2} \omega(z^d),
\]
where $\omega(z)/\rho_*^2(z)\to 0$ as $|z|\to 1^+$. 
 The lemma follows in view of the convergence
 $(1/d) \cdot (\rho_*(z)/\rho_*(z^d)) \to 1$ as $|z|^d\to 1^+,$
  which is uniform over $d\ge 2$.

Alternatively, one can use a  version of Koebe's distortion theorem for maps which preserve the unit circle,  see \cite[Section 2]{ivrii-phd}.
\end{proof}

\subsection{Periodising Beltrami coefficents}
We now prove the first equality in Theorem \ref{fractal-approximation} which says that 
$\Sigma^2 =  \sup_{\mu \in M_{\ei}, \ |\mu| \le \chi_{\mathbb{D}}}  \sigma^2(\mathcal S\mu)$. 
In view of Lemma \ref{lemma:sigmas}, given a Beltrami coefficient $\mu$ with $|\mu|  \le \chi_{\mathbb{D}}$ and $\varepsilon>0$, it suffices to construct an eventually-invariant Beltrami coefficient $\mu_d$ which satisfies
\begin{equation}
\label{eq:fractal-approximation1}
|\mu_d|   \le \chi_{\mathbb{D}} \qquad \text{and} \qquad
 \sigma^2_4(v'_{\mu_d}) \ge \sigma^2_4(v'_\mu) - \varepsilon.
\end{equation}

\begin{proof}[Proof of Theorem \ref{fractal-approximation}, first equality]
Given an integer $d \ge 2$, we construct a Beltrami coefficient $\mu_d \in M_{\ei}(d)$. 
We then show that $\mu_d$ satisfies (\ref {eq:fractal-approximation1})
for  $d$ sufficiently large.

\medskip

{\em Step 1.} 
Using the definition of asymptotic variance (\ref{eq:mcm-formula2}), we select an  
 annulus $$A_0^* = A(R_1, R_0) \subset \mathbb{D}^*, \quad R_1 = R_0^{1/d}, \quad R_0 \approx 1,$$
  for which
\begin{equation*}
\sigma^2_4(v'_\mu) - \varepsilon/3 \le
\frac{8}{3} \, \fint_{A_0^*} \biggl |\frac{v'''_\mu}{\rho_*^2}(z) \biggr |^2 \rho_*(z) dm.
\end{equation*}
Let $A_0 = A(r_0, r_1)$ be the reflection of $A_0^*$ in the unit circle. 
We take $\mu_d = \mu$ on $A_0$ and then extend $\mu_d$
to $\{z : r_1 < |z| < 1\}$ by $z^d$-invariance. On $|z| < r_0$, we set $\mu_d = 0$.

\medskip

\noindent {\em Step 2.} 
The estimate (\ref{eq:fractal-approximation1}) relies on an isoperimetric feature of the measure $\rho_*(z)dm$, which we now describe.
It is easy to see that the $\rho_*(z)dm$-area of an annulus $$A(S_1, S_2), \quad 1 < S_1 < S_2 < \infty,$$
  is
$2\pi$ times the hyperbolic distance between its boundary components. In particular, the $\rho_*(z)dm$-area of $A_0^*$ is roughly $2\pi \log d$. By contrast, for a fixed $L > 0$, the  $\rho_*(z)dm$-area of its ``periphery''
$$
\partial_L A_0^* := \{z \in A_0^*, \, d_{\mathbb{D}}(z, \partial A_0^*) < L \}
$$
is $4\pi L$ (provided that $\log d \ge 2L$). We conclude that the ratio of $\rho_*(z)dm$-areas of $\partial_L A_0^*$ and $A_0^*$ tends to 0 as
$d \to \infty$.  

\medskip

\noindent {\em Step 3.} 
By part $(b)$ of Lemma \ref{qbounds}, 
\begin{equation}
\label{eq:six-1}
\biggl |\frac{v'''_{\mu_d}}{\rho_*^2} (z) - \frac{v'''_{\mu}}{\rho_*^2} (z) \biggr | \le Ce^{-L},  \quad z \in A_0^* \setminus \partial_L  A_0^*,
\end{equation}
while
\begin{equation}
\label{eq:six-2}
\biggl |\frac{v'''_{\mu_d}}{\rho_*^2} (z) - \frac{v'''_{\mu}}{\rho_*^2} (z) \biggr | \le 3, \quad z \in \partial_L  A_0^*.
\end{equation}
Putting the above estimates  together gives (for large degree $d$)
\begin{equation}
\label{eq:annulus-estimate}
\left |\,\frac{8}{3} \, \fint_{A_0^*} \biggl | \frac{v'''_{\mu_d}}{\rho_*^2} (z)\biggr|^2 \rho_*(z)dm  - \frac{8}{3} \, \fint_{A_0^*} \biggl|\frac{v'''_\mu}{\rho_*^2}(z) \biggr |^2 \rho_*(z)dm \, \right| < \varepsilon/3.
\end{equation}

\smallskip

\noindent {\em Step 4.} 
Set $R_k := R_0^{1/d^k}$ and $A_k^* =A(R_{k+1},R_k)$. By Lemma \ref{qd-almostinvariant2}, 
$$
\left |\, \frac{8}{3}\,\fint_{A_k^*} \biggl |\frac{v'''_{\mu_d}}{\rho_*^2}(z) \biggr |^2 \rho_*(z) dm
\ - \ \frac{8}{3} \,
\fint_{A_0^*} \biggl |\frac{v'''_{\mu_d}}{\rho_*^2}(z) \biggr |^2 \rho_*(z) dm \, \right | < \varepsilon/3, $$
which implies that $\sigma^2_4(v'_{\mu_d}) \ge \sigma^2_4(v'_{\mu})-\varepsilon$ as desired.
\end{proof}

\begin{remark}
(i) The isoperimetric property used above does not hold with respect to the hyperbolic area $\rho_*^2(z) dm$. In fact, as we explain in Section \ref{se:Fuchsian},  periodisation fails in the Fuchsian case.

(ii) Refining the above argument shows that one can take $\varepsilon = C/\log d$ in  (\ref {eq:fractal-approximation1}), but we will not need this more quantitative estimate.
\end{remark}

\subsection{Polynomial perturbations}
\label{subsection:pp} 
To show the second equality in Theorem \ref{fractal-approximation}, we need a description of vector fields 
 which arise from polynomial perturbations of $z \to z^d, \, d \ge 2$. 
\begin{lemma} \cite[Section 5]{mcmullen}
\label{lemma:polynomials}
Consider the family of polynomials
\begin{equation} \label{sarja}
P_t(z) = z^d + t\, Q(z), \qquad \deg Q \leq d-2, 
 \quad |t| < \varepsilon_0.
\end{equation}
Let $\varphi_t: \mathbb{D}^* = A_{P_0}(\infty) \to A_{P_t}(\infty)$ denote the conjugacy map
 and $v = \frac{d\varphi_t}{dt}\bigl |_{t = 0}$ be the associated vector field as before.
Then,
\begin{equation} \label{sarja2}
v(z) = \sum_{k=0}^\infty v_k(z) =  \frac{z}{d}\sum_{k \ge 0} \frac{Q( z^{d^{k}})}{ d^{k} z^{d^{k+1}} }, \qquad z \in \DD^*.
\end{equation}
\end{lemma}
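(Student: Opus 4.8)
The plan is to derive the series \eqref{sarja2} directly from the functional equation satisfied by the conjugacy, exactly as in the treatment of the special case $Q(z) = z$ in \eqref{eq:functionaleq}--\eqref{eq:lacunaryjulia}. First I would record the conjugacy relation $\varphi_t \circ P_0 = P_t \circ \varphi_t$ on $\mathbb{D}^*$, i.e. $\varphi_t(z^d) = \varphi_t(z)^d + t\, Q(\varphi_t(z))$, and differentiate with respect to $t$ at $t = 0$. Since $\varphi_0 = \mathrm{id}$, this yields the inhomogeneous linear functional equation
\begin{equation}
\label{eq:funceq-poly}
v(z^d) = d\, z^{d-1} v(z) + Q(z), \qquad z \in \mathbb{D}^*,
\end{equation}
which reduces to \eqref{eq:functionaleq} when $Q(z) = z$. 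The key structural point is that the homogeneous equation $w(z^d) = d z^{d-1} w(z)$ behaves well under the substitution $z \mapsto z^d$, and the factor $d z^{d-1}$ is exactly the derivative of $z \mapsto z^d$, so one expects a solution built by iterating: $v(z) = \sum_{k \ge 0} v_k(z)$ where each $v_k$ absorbs one copy of $Q$ pulled back $k$ times.

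Next I would verify that the proposed series solves \eqref{eq:funceq-poly} and has the right analytic/normalisation properties. Set $v_k(z) = \frac{z}{d}\cdot \frac{Q(z^{d^k})}{d^k z^{d^{k+1}}}$. One checks term-by-term that $v_k(z^d) = d z^{d-1} v_{k+1}(z)$ for $k \ge 0$ while the $k=0$ term produces $v_0(z^d) = \frac{z^d}{d} \cdot \frac{Q(z^d)}{z^{d^2}}$; regrouping, $v(z^d) = d z^{d-1}\sum_{k \ge 1} v_{k}(z) \cdot(\text{shift}) + Q(z) = d z^{d-1} v(z) + Q(z)$ once one accounts correctly for the $k=0$ summand. (I would write this out carefully: the cleanest way is to show $v(z^d) - d z^{d-1} v(z) = d z^{d-1} v_0(z) \cdot(\ldots)$ collapses to $Q(z)$, using $\deg Q \le d-2$ so that $Q(z)/z^{d}$ decays.) Convergence on $\mathbb{D}^*$ is immediate: since $|z| > 1$ and $\deg Q \le d - 2$, the general term is $O\big(|z|^{1 + (d-2)d^k - d^{k+1}}\big) = O\big(|z|^{1 - 2 d^k}\big)$, which is summable uniformly on $\{|z| \ge 1 + \eta\}$; moreover $v$ is holomorphic on $\mathbb{D}^*$ and $v(z) = O(1/z^{d-1-(d-2)}) \to 0$ at infinity when $\deg Q \le d-2$, matching the principal normalisation of $\varphi_t$ (which fixes $\infty$ with derivative $1$).

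Finally, I would argue uniqueness to conclude that this series is indeed \emph{the} vector field associated with the family: any two holomorphic solutions of \eqref{eq:funceq-poly} on $\mathbb{D}^*$ that vanish at $\infty$ differ by a solution $w$ of the homogeneous equation $w(z^d) = d z^{d-1} w(z)$ vanishing at $\infty$; comparing Laurent coefficients (or growth rates under iteration of $z \mapsto z^d$) forces $w \equiv 0$. That $v = \frac{d\varphi_t}{dt}\big|_{t=0}$ actually satisfies \eqref{eq:funceq-poly} rather than just formally is justified by the holomorphic dependence of $\varphi_t$ on $t$ coming from the $\lambda$-lemma, as used repeatedly in Section \ref{se:lowerbound}. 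The main obstacle, modest as it is, is bookkeeping: getting the index shifts in the telescoping of $\sum_k v_k(z^d)$ exactly right and confirming that the hypothesis $\deg Q \le d - 2$ is precisely what makes both the inhomogeneous term collapse correctly and the vanishing-at-infinity normalisation hold; everything else is routine. (This is, of course, the content of \cite[Section 5]{mcmullen}, so I would mostly be reproducing that computation in the present notation.)
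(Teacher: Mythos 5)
Your overall route --- differentiate the conjugacy relation to get $v(z^d)=d\,z^{d-1}v(z)+Q(z)$, check that the explicit series solves it, verify convergence and decay at infinity from $\deg Q\le d-2$, and finish with a uniqueness argument for the functional equation --- is exactly the route the paper itself follows for the special case $Q(z)=z$ in Section \ref{se:lowerbound} (for Lemma \ref{lemma:polynomials} the paper offers no proof, only the citation to McMullen). The convergence estimate, the vanishing at infinity, and the uniqueness step (iterate the homogeneous relation $w(z^d)=dz^{d-1}w(z)$ and use boundedness of $w$ near $\infty$) are all sound; one small point is that ``fixes $\infty$ with derivative $1$'' alone does not give $v(\infty)=0$ --- you also need the constant term of $\varphi_t$ at infinity to vanish, which holds precisely because $P_t$ has no $z^{d-1}$-term.

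The step you defer as ``bookkeeping'' is, however, the one place where the argument does not close as claimed: a sign is off. Writing $v_k(z)=\frac{z}{d}\,\frac{Q(z^{d^k})}{d^k z^{d^{k+1}}}$ and $S=\sum_{k\ge0}v_k$, your own (correct) identity $v_k(z^d)=d\,z^{d-1}v_{k+1}(z)$ telescopes to
\[
S(z^d)\;=\;d\,z^{d-1}\sum_{j\ge1}v_j(z)\;=\;d\,z^{d-1}S(z)-d\,z^{d-1}v_0(z)\;=\;d\,z^{d-1}S(z)-Q(z),
\]
so the printed series satisfies the functional equation with $-Q$, not $+Q$; the solution of $v(z^d)=dz^{d-1}v(z)+Q(z)$ that vanishes at infinity is $v=-S$. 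This is consistent with the paper's own special case: setting $Q(z)=z$ in \eqref{sarja2} contradicts \eqref{eq:lacunaryjulia}, which carries the minus sign, and a direct first-order B\"ottcher computation (e.g.\ $P_t(z)=z^3+t$ gives $v(z)=-\tfrac{1}{3z^2}+\dots$) confirms the minus sign under the paper's conventions $\varphi_t\circ P_0=P_t\circ\varphi_t$, $v=\frac{d\varphi_t}{dt}\bigl|_{t=0}$. So to make your proof correct you must either insert the minus sign (and observe that \eqref{sarja2} as printed has a harmless sign slip --- harmless because all subsequent uses involve only $\sigma^2(v'_\mu)$ and membership of $v_\mu=\mathcal C\mu$ in $\Vpp$, and one may replace $\mu$ by $-\mu$), or refrain from asserting that the printed series satisfies the $+Q$ equation; as written, that assertion is false.
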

Let $\Vpp(d)$ be the collection of holomorphic vector fields of the form \eqref{sarja2}, with $\deg Q \le d-2$.
From this description, it is clear that each $\Vpp(d), \, d \ge 2$ is a vector space, but the union $\Vpp = \bigcup_{d \ge 2} \Vpp(d)$ is not.
Observe that two consecutive terms in \eqref{sarja2} satisfy the ``periodicity'' relation
\begin{equation}
\label{eq:periodicity}
v_{k+1}(z)=\frac{1}{dz^{d-1}} v_k(z^d),
\end{equation}
which is of the form \eqref{eq:pullback1} provided that $\mathcal C \mu(0)=0$. 

Similarly, we define $M_{\pp} = 
\bigcup_{d \ge 2}M_{\pp}(d)$ as the class of Beltrami coefficients that give rise to polynomial perturbations. More precisely, $M_{\pp}(d)$ consists
of  eventually-invariant Beltrami coefficients $\mu \in M_{\ei}(d)$
 for which $v_\mu=\mathcal C \mu \in \Vpp(d)$. 

\subsection{A truncation lemma}
In order to approximate infinite series by finite sums, we need some kind of a truncation procedure. To this end, we show the following lemma:

\begin{lemma}
\label{new-truncation-lemma}
Suppose $\mu$ is a Beltrami coefficient satisfying $\|\mu\|_\infty \le 1$ and $\supp \mu \subset A(\rho_0, \rho_1)$, with $0 < \rho_0 < \rho_1 < 1$.
Given a slightly larger annulus $A(\rho_0, r_1)$ 
and an $\varepsilon > 0$, there exists a Beltrami coefficient $\tilde \mu$ satisfying
 
{\em (i)} $\supp \tilde \mu \subset A(\rho_0, r_1),$

{\em (ii)} $ \|\tilde \mu - \mu\|_\infty < \varepsilon,$

{\em (iii)} $
v_{\tilde \mu}(0) = v_\mu(0),
$

{\em (iv)} $
v_{\tilde \mu} \text{ is a polynomial in }z^{-1}.
$
\end{lemma}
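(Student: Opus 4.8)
The plan is to pass to the Cauchy transform. Since $v_\mu=\mathcal{C}\mu$ and $v_\mu(0)=\mathcal{C}\mu(0)$ for principal solutions, condition (iv) says exactly that the Laurent expansion of $\mathcal{C}\tilde\mu$ at infinity terminates, and the Laurent coefficients of $\mathcal{C}\mu$ on $\{|z|>\rho_1\}$ are, up to the factor $1/\pi$, the moments $a_k:=\int_{\mathbb{C}}\mu(w)\,w^{k}\,dm(w)$, $k\ge 0$. Since $\supp\mu\subset A(\rho_0,\rho_1)$ with $\rho_1<1$ and $|\mu|\le 1$, these decay geometrically: $|a_k|\le\frac{2\pi}{k+2}\rho_1^{k+2}$. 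So I would fix a large cutoff $n_0$ (to be chosen only at the very end) and subtract from $\mu$ a sum of rescaled orthogonal building blocks, one killing each moment $a_{n-2}$ with $n\ge n_0$, supported on pairwise disjoint sub-annuli of the thin collar $A(\rho_1,r_1)$; the resulting $\tilde\mu$ then has all moments of order $\ge n_0-2$ equal to zero, whence $\mathcal{C}\tilde\mu$ is a polynomial in $z^{-1}$ on $\{|z|>r_1\}$.

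Concretely, I would use the orthogonal building blocks $\mu_n(z)=(\overline{z}/|z|)^{n-2}\chi_{A(\sigma,\tau)}$ of Lemma~\ref{lemma:basic-coefficient}, which have the two features we need: $\mu_n$ influences only the single moment $m_{n-2}$, with $m_{n-2}(\mu_n)=\frac{2\pi}{n}(\tau^{n}-\sigma^{n})$, and $\mathcal{C}\mu_n(0)=0$. For $n\ge n_0$ pick a sub-annulus $A_n=A(\sigma_n,\tau_n)$, the $A_n$ pairwise disjoint and all contained in the outer half $A(\sqrt{\rho_1 r_1},r_1)$ of the collar, and set
\[
\tilde\mu:=\mu-\sum_{n\ge n_0}c_n\,\mu_n,\qquad c_n:=\frac{n\,a_{n-2}}{2\pi\,(\tau_n^{n}-\sigma_n^{n})},
\]
so that $c_n\mu_n$ cancels $a_{n-2}$. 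Then $m_k(\tilde\mu)=0$ for $k\ge n_0-2$ and $m_k(\tilde\mu)=a_k$ for $0\le k<n_0-2$, giving (iv); (iii) holds automatically because each $\mathcal{C}\mu_n(0)=0$, so $\mathcal{C}\tilde\mu(0)=\mathcal{C}\mu(0)$; (i) holds since $\supp\tilde\mu\subset A(\rho_0,\rho_1)\cup A(\sqrt{\rho_1 r_1},r_1)\subset A(\rho_0,r_1)$; and, the supports being disjoint, $\|\tilde\mu-\mu\|_\infty=\sup_{n\ge n_0}|c_n|$ and $\|\tilde\mu\|_\infty\le\max(\|\mu\|_\infty,\varepsilon)$.

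The one genuine point is to make $\sup_{n\ge n_0}|c_n|<\varepsilon$ while still fitting all the $A_n$ into the collar. From $|a_{n-2}|\le\frac{2\pi}{n}\rho_1^{n}$ and $\sigma_n,\tau_n\ge\sqrt{\rho_1 r_1}$,
\[
|c_n|\le\frac{\rho_1^{n}}{\tau_n^{n}-\sigma_n^{n}}\le\frac{(\rho_1/\sqrt{\rho_1 r_1})^{n}}{e^{n w_n}-1}=\frac{q^{n}}{e^{n w_n}-1},\qquad q:=\sqrt{\rho_1/r_1}<1,\quad w_n:=\log(\tau_n/\sigma_n).
\]
Choosing $w_n:=n^{-2}$ makes $e^{nw_n}-1\ge 1/n$, hence $|c_n|\le n\,q^{n}\to 0$, while $\sum_{n\ge n_0}w_n=\sum_{n\ge n_0}n^{-2}\to 0$ as $n_0\to\infty$; thus for $n_0$ large the annuli $A_n$ (of log-widths $w_n$) can be stacked disjointly inside $A(\sqrt{\rho_1 r_1},r_1)$, whose log-width is $\tfrac12\log(r_1/\rho_1)>0$, and simultaneously $\sup_{n\ge n_0}|c_n|<\varepsilon$. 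Fixing such an $n_0$ finishes the construction.

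The main obstacle is precisely this last step: we must annihilate infinitely many moments with one bounded perturbation, so a priori there is no reason it should be small. What rescues us is a competition of two geometric rates — the moments decay at rate $\rho_1$, whereas a unit-modulus building block supported near the outer edge $|z|\approx r_1$ of the collar produces a moment of size $\approx r_1^{n}$ with $r_1>\rho_1$ — so the quotient $|c_n|$ still decays geometrically and the whole tail becomes uniformly tiny once $n_0$ is pushed up. Everything else (disjointness of the supports, convergence of the defining series to an honest bounded function, and the bookkeeping of exact constants) is routine.
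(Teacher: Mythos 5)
Your proof is correct, and it rests on the same two pillars as the paper's own argument: the explicit building blocks $\mu_n(z)=(\overline z/|z|)^{n-2}\chi_{A(\sigma,\tau)}$ of Lemma \ref{lemma:basic-coefficient} (which touch only the single moment of order $n-2$, have $\mathcal C\mu_n(0)=0$) and the competition between the decay rate $\rho_1^{\,n}$ of the moments of $\mu$ and the size $\sim r_1^{\,n}$ of the moments a unit-norm corrector can produce in the wider annulus. The only real difference is bookkeeping of supports: the paper places \emph{all} correction blocks on the single annulus $A(\rho_0,r_1)$ and controls $\|\tilde\mu-\mu\|_\infty$ by the summed geometric series $\sum_{j\ge N+1}(\rho_1/r_1)^{j+1}<\varepsilon$, whereas you distribute them over pairwise disjoint thin shells in $A(\sqrt{\rho_1 r_1},r_1)$ and only need $\sup_n|c_n|<\varepsilon$, at the cost of the (correct) extra argument that shells of log-width $n^{-2}$ fit in the collar once $n_0$ is large. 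Your variant buys the bonus $\|\tilde\mu\|_\infty\le\max(\|\mu\|_\infty,\varepsilon)$, which the paper does not get from the lemma itself and instead recovers in the application by replacing $\tilde\mu_0$ with $\tilde\mu_0/(1+\varepsilon)$; the paper's variant is shorter since no disjointness or width bookkeeping is needed. The one step you wave at ("routine") -- interchanging $\mathcal C$ with the infinite sum -- is indeed harmless, since the tail converges in $L^p$ with compact support and the Cauchy transform is continuous there.
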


\begin{proof}
 From
\begin{equation*}
v_{\mu}(z) =  \frac{1}{\pi z } \int_{\mathbb{D}} \mu(w) \Bigl (1 + w/z + w^2/z^2 + \dots \Bigr ) dm(w),
\end{equation*}
it follows that
\begin{equation*}
v_{\mu} = \sum_{j=1}^\infty b_j z^{-j}, \qquad b_j =  \frac{1}{\pi } \int_{\mathbb{D}} \mu(w) w^{j-1} dm(w).
\end{equation*}
Since $\mu$ is supported on $A(\rho_0, \rho_1)$, the coefficients $b_j$ decay exponentially, more precisely, $|b_j| \le  \frac{2}{j+1} (\rho_1^{j+1}-\rho_0^{j+1})$.
As $\rho_1/r_1 < 1$, for $N$ sufficiently large, we have $$\sum_{j \ge N+1} \frac{|b_j|}{\frac{2}{j+1} \bigl ( r_1^{j+1}-\rho_0^{j+1} \bigr )}
\le
\sum_{j \ge N+1} \frac{\rho_1^{j+1}-\rho_0^{j+1}}{r_1^{j+1}-\rho_0^{j+1}}
\le
\sum_{j \ge N+1} \frac{\rho_1^{j+1}}{r_1^{j+1}}
 \le \varepsilon.$$
Using Lemma \ref{lemma:basic-coefficient}, is easy to see that
$$
\tilde{\mu} = \mu - \sum_{j \ge N+1} \frac{b_j}{\frac{2}{j+1} \bigl ( r_1^{j+1}-\rho_0^{j+1} \bigr )}  \cdot \biggl ( \frac{\overline{z}}{|z|} \biggr )^{j-1} \cdot \chi_{A(\rho_0,r_1)}(z)
$$
satisfies the desired properties.
\end{proof}

\subsection{Periodising quadratic differentials} With these preliminaries, we can complete the proof of Theorem \ref{fractal-approximation}.

\begin{proof}[Proof of Theorem \ref{fractal-approximation}, second equality]
From the proof of the first part of the theorem, we may assume that $\mu$ is an eventually-invariant Beltrami coefficient of the form
$
\mu = \mu_0  + \mu_1 + \dots$
where
$$
 \mu_k = (z^{d^k})^*\mu_0, \quad \supp \mu_k \subset A_k = A(r_k, r_{k+1}), \quad  r_k = r_0^{1/d^{k}},
\quad 0<r_0<1.
$$
Furthermore, it will be convenient to assume that $\mu_0$ itself arises as a pullback under $z \to z^d$, which by Remark \ref{rem:cauchy0}
implies that $v_{\mu_k}(0)=0$ for all $k \ge 0$. This could be achieved by considering $(z^d)^*\mu$ instead of $\mu$ and renaming $r_1$ by $r_0$.  

\medskip

\noindent {\em Step 1.} 
We now show that we may additionally assume that $v_{\mu_0}$ is a polynomial in $z^{-1}$. For this purpose, we first replace
 $\mu_0$ by $\mu_0 \cdot \chi_{A(r_0, \rho_1)}$ so that $\supp \mu_0$ is contained in a slightly smaller annulus $A(r_0, \rho_1) \subset A(r_0, r_1)$.
 We then apply Lemma \ref{new-truncation-lemma} with $\mu = \mu_0$ to
 obtain a Beltrami coefficient $\tilde \mu_0$ supported on $A(r_0, r_1)$ with the desired property.
Finally, we replace $\tilde \mu_0$ by $\tilde \mu_0/(1+\varepsilon)$ to ensure that $\|\tilde \mu_0\|_\infty \le 1$.
Since all three operations
have little effect on the integral
$$
\fint_{A(1/r_1, 1/r_0)} \biggl |\frac{v'''_\mu}{\rho_*^2}(z) \biggr |^2 \rho_*(z) dm,
$$
we see that
 $\sigma_4^2(v'_{\tilde \mu}) \approx  \sigma_4^2(v'_\mu)$  where $\tilde \mu := \sum_{k \ge 0} \tilde \mu_k = \sum_{k \ge 0} (z^{d^k})^* \tilde \mu_0$.

\smallskip

\noindent {\em Step 2.} In view of Lemma \ref{pullback-coefficients}, the sequence $v_k  = v_{\tilde \mu_k}$ 
 satisfies the degree $d$ periodicity relation \eqref{eq:periodicity}. However,
 we cannot guarantee that 
 $v = \sum_{k=0}^\infty v_k \in \Vpp(d)$
since the base polynomial $v_0$ may have degree greater than $d-1$ in $z^{-1}$.
Let $m$ be the smallest integer so that $\deg_{z^{-1}} v_0 \le d^m - 1$, and take $M>m$. Consider then the Beltrami coefficient
$\hat \mu = \sum \hat \mu_k$ where
$$
\hat \mu_0 =  \tilde \mu_0 + \tilde \mu_1 + \dots + \tilde \mu_{M-m}
 \quad \text{and} \quad \hat \mu_k =  (z^{k d^M})^*{\hat \mu_0}.
$$
Similarly, define
$$\hat v_0  = \mathcal C\hat \mu_0 = v_0 + v_1 + \dots + v_{M-m},$$
$$ \hat v_k = \mathcal C\hat \mu_k \quad \text{and} \quad \hat v = \sum \hat v_k.$$
By construction, $\hat v$ is the periodisation of $\hat v_0$
under the relation \eqref{eq:periodicity}, with $d^M$ in place of $d$. Since $\deg_{z^{-1}} v_{M-m} \leqslant d^{M-m} \deg_{z^{-1}} v_0 + (d^{M-m}-1)$, we have $\deg_{z^{-1}} \hat v_0 \le d^M - 1$ which ensures that $\hat v \in \Vpp(d^M)$.
Explicitly, $\hat v$ is the vector field associated to the polynomial perturbation 
$$
P_t(z) = z^{d^M} + t \cdot d^M z^{d^M-1} \, \hat v_0(z), \qquad |t| < \varepsilon_0.
$$

By taking $M >\!\!\!> m$, the fraction of the ``unused'' shells (i.e.~those corresponding to indices
$M-m+1, \dots, M-1$) can be made arbitrarily small.
Using Lemma \ref{qbounds} $(b)$ like in Step 3 in the proof of the first equality in Theorem \ref{fractal-approximation} shows that  $\sigma^2_4(v'_{\hat \mu}) \approx \sigma^2_4(v'_{\tilde \mu})$ as desired.
 \end{proof}

\begin{proof}[Proof of Corollary \ref{cor:Juliaapprox}]
By the second equality in Theorem \ref{fractal-approximation}, for $\varepsilon>0$, one can find a Beltrami coefficient $\mu \in M_{\pp}$ with 
$|\mu| \le \chi_{\mathbb{D}}$ for which $\sigma^2(\mathcal{S}\mu) > \Sigma^2-\varepsilon$.
By the definition of $M_{\pp}$, the associated vector field lies in $\Vpp$. By Lemma \ref{lemma:polynomials}, there exists a family of polynomials 
\[ P_t(z) = z^d + t Q(z), \quad \deg Q \le d-2, \quad |t|<\varepsilon_0,
\]
with
\begin{equation}
\label{eq:v=C}
\frac{d}{dt}\biggl |_{t = 0} \varphi_t(z) \ = \  \mathcal{C}\mu(z) \ = \ \frac{z}{d}\sum_{k \ge 0} \frac{Q( z^{d^{k}})}{ d^{k} z^{d^{k+1}} }, \qquad z \in \mathbb{D}^*,
\end{equation}
where $\varphi_t: \mathbb{D}^* = A_{P_0}(\infty) \to A_{P_t}(\infty)$ are conformal conjugacies.
We are now in a position to repeat the argument in the proof of Theorem \ref{improvement}. 
Indeed, by the $\lambda$-lemma, the conformal maps $\varphi_t$ admit \emph{some} quasiconformal extensions $H_t \colon \mathbb{C} \to \mathbb{C}$.
Using \eqref{eq:v=C}, for $|t|<\varepsilon_0$, we can correct the extensions $H_t$ by 
pre-composing them with Teichm\"uller-trivial deformations $N_t^{-1}$ like in 
\eqref{eq:correctedextension}, so that
\[ \mu_{H_t \circ N_t^{-1}} = t \mu+\mathcal{O}(t^2).
\]
Therefore, the Julia sets $\mathcal J_t=\mathcal J(P_t)$ are $k(t)$-quasicircles with
\[ k(t)=\frac{|t|}{2}+ \mathcal{O}(|t|^2), \quad \mbox{as $t \to 0$}.
\]
On the other hand, their Hausdorff dimensions satisfy 
\[ \Hdim \mathcal J_t=1+ \sigma^2(v_\mu') \frac{|t|^2}{4}+\mathcal{O}(|t|^3).
\]
Since $\sigma^2(v_\mu')=\sigma^2(\mathcal S\mu)>\Sigma^2-\varepsilon$, letting $t \to 0$ proves the claim.
\end{proof}

\section{Fuchsian groups}
\label{se:Fuchsian}

One may ask whether 
\begin{equation}
\label{eq:fuchsian-question}
\Sigma^2 \, \stackrel{?}{=} \, \sup_{\mu \in M_{\fuchs},\ |\mu| \le \chi_{\mathbb{D}}} \sigma^2(\mathcal S\mu),
\end{equation} 
 for the class $M_{\text{F}} = \bigcup_\Gamma M_{\Gamma}(\mathbb{D})$  of Beltrami coefficients  that are invariant under some co-compact Fuchsian group $\Gamma$,
i.e.~$\gamma^*\mu = \mu$ for all $\gamma \in \Gamma$.
It is tempting to take a Beltrami coefficient $\mu$ on the unit disk and  periodise it
with respect to a Fuchsian group $\Gamma$ of high genus, i.e.~to form a $\Gamma$-invariant Beltrami coefficient
$\mu_F$ which coincides with $\mu$ on a fundamental domain $F \subset \mathbb{D}$. However, one cannot guarantee that $\sigma^2(v'_{\mu_F}) \approx \sigma^2(v'_\mu)$.

The reason for this is that the hyperbolic area of $F$ is comparable to the hyperbolic area of its ``periphery''
$$
\partial_1 F := \{z \in F, \, d_{\mathbb{D}}(z, \partial F) < 1 \}.
$$
Unlike our considerations in complex dynamics (with the maps $z \to z^d$), in the Fuchsian case, the periphery is significant:  
Indeed, if $\pi: \mathbb{D} \to \mathbb{D}/\Gamma$ denotes the universal covering map, it is well-known that as $r \to 1$, the curves $\pi(\{z : |z|=r\})$ become equidistributed with respect to the hyperbolic metric on $\mathbb{D}/\Gamma$. Therefore, for $r$ close to 1, the curves $\pi(\{z : |z|=r\})$ spend a definite amount of time in $\partial_1 F \subset F \cong
 \mathbb{D}/\Gamma$, which allows the asymptotic variance to go down after periodisation. Indeed,
fractal approximation fails in the Fuchsian case as evidenced by Theorem \ref{fuchsian-case}.

Theorem \ref{fuchsian-case} is a simple consequence of the comparison
\begin{equation}
\label{eq:tangent-estimate}
\frac{\|\mu\|_{\WP}^2}{\Area(X)} \le \|\mu\|_T^2
\end{equation}
 between the Teichm\"uller and Weil-Petersson metrics on the
Teichm\"uller space $\mathcal T_g$ of compact Riemann surfaces of genus $g \ge 2$, for instance, see \cite[Proposition 2.4]{mcm-kahler}. Here,
the area is taken with respect to the hyperbolic metric on $X$.
For the convenience of the reader, we
recall the definitions and sketch the rather simple arguments.

For $X \in \mathcal T_g$, the cotangent space $T_X^* \mathcal T_g$ is canonically identified with the space $Q(X)$
of holomorphic quadratic differentials $q(z)dz^2$ on $X$. On the cotangent space, the Teichm\"uller and Weil-Petersson norms 
are given by
$$
\|q\|_T := \int_X |q|, \qquad \|q\|^2_{\WP} := \int_X |q|^2 \rho^{-2},
$$
where $\rho$ denotes the density of the hyperbolic metric.
Dualising shows that the tangent space $T_X \mathcal T_g \cong M(X)/Q(X)^\perp$ is naturally identified with the quotient space of Beltrami coefficients
$\mu \in L^\infty(X)$ modulo ones orthogonal to $Q(X)$ with respect to the pairing
$\langle \mu, q \rangle = \int_X \mu q$. The Teichm\"uller and Weil-Petersson metrics on the tangent space may be obtained by dualising the above
definitions:
$$
\|\mu\|_T := \sup_{\|q\|_T =1 } \, \biggl | \int_X \mu q \biggr |, \qquad \|\mu\|_{\WP} := \sup_{\|q\|_{\WP} =1 } \, \biggl | \int_X \mu q \biggr |.
$$
Duality considerations also show that (\ref{eq:tangent-estimate}) is equivalent to the estimate
\begin{equation}
\label{eq:cotangent-estimate}
\Area(X) \, \|q\|_{\WP}^2 \ge \|q\|_T^2,
\end{equation}
which is immediate from the Cauchy-Schwarz inequality.

To show that $\Sigma^2_{\fuchs} \le 2/3$, it suffices to describe the standard geometric interpretations for the dual norms.
For the Teichm\"uller norm, one has
\begin{equation}
\label{eq:teich-identification}
\|\mu\|_T \le \|\mu\|_\infty.
\end{equation}
In fact, $\|\mu\|_T = \inf_{\nu \sim \mu} \|\nu\|_\infty$, where the infimum is taken over all $\nu$ infinitesimally equivalent to $\mu$
(with $\mathcal C\nu = \mathcal C\mu$ on $\mathbb{D}^*$)
and the minimum is achieved for a unique Teichm\"uller coefficient of the form $k \cdot \frac{\overline{q}}{|q|}$ with $q \in Q(X)$.

On the other hand, the Weil-Petersson norm may be computed by
\begin{eqnarray}
\frac{\|\mu\|_{\WP}^2}{\Area(X)}
& = &  \frac{1}{\Area(X)} \int_X  |2v''' /\rho^2|^2 \, \rho^{2} dxdy, \nonumber \\
& = & \lim_{R \to 1^+} \frac{1}{2\pi} \int_{|z|=R} |2v'''/\rho^2|^2 d\theta, \nonumber \\
\label{eq:wp-identification}
& = & \frac{3}{2} \sigma^2(v'_\mu).
\end{eqnarray}
The first equality follows from the fact that $\rho^{-2} \cdot (1/\overline{z})^* \bigl [ -2v'''(z) dz^2 \bigr ]$ is the harmonic representative of the Teichm\"uller class of $[\mu]$, e.g.~see \cite[Chapter 7]{IT}, the second
from equidistribution, and the third from Lemma \ref{lemma:sigmas}. 
Substituting (\ref {eq:teich-identification}) and (\ref{eq:wp-identification}) into (\ref{eq:tangent-estimate}) gives $\Sigma^2_{\fuchs} \le 2/3$.

To complete the proof of Theorem \ref{fuchsian-case}, it suffices to show that there is a definitive defect in the Cauchy-Schwarz inequality
(\ref{eq:cotangent-estimate}), i.e.~that there exists a constant $\varepsilon_0 > 0$ for which
\begin{equation}
\label{eq:cotangent-estimate2}
(1 - \varepsilon_0) \cdot \Area(X) \, \|q\|_{\WP}^2 \ge  \|q\|_T^2,
\end{equation}
independent of $g \ge 2$, $X \in \mathcal T_g$ and $q \in Q(X)$.
For this purpose, we use the following general fact: for non-zero vectors $\mathbf{x}$ and $\mathbf{y}$ in a Hilbert space $\mathcal H$,
\begin{equation}
\label{eq:hilbert-space-lemma}
\biggl \| \frac{\mathbf{x}}{\| \mathbf{x} \|} - \frac{\mathbf{y}}{\| \mathbf{y} \|} \biggr \| > \delta \quad \implies \quad
\bigl |\langle \mathbf{x}, \mathbf{y} \rangle \bigr | < (1 - \varepsilon) \, \| \mathbf{x} \| \, \| \mathbf{y} \|.
\end{equation}
In our setting, $\mathcal H = L^2(X, \rho^2)$, $\mathbf{x} = |q|/\rho^2$ and $\mathbf{y} = 1$. To make use of the above motif, we need to show that:
\begin{lemma}
There exists a positive constant $\delta > 0$ so that
\begin{equation}
\label{eq:positivity}
\int_X \, \biggl | \frac{|q|}{\rho^2} - 1 \biggr |^2 \, \rho^2 dxdy \, > \, \delta^2 \int_X \, \rho^2 dxdy.
\end{equation}
independent of the Riemann surface $X$ and $q \in Q(X)$.
 \end{lemma}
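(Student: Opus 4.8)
The plan is to reduce the statement to a uniform lower bound for the variance of $f := |q|/\rho^2$ with respect to the hyperbolic area measure $dA_h := \rho^2\,dxdy$, and then to exploit the fact that the conformal factor $f$, which relates the flat metric $|q|$ to the hyperbolic metric, is forced to oscillate on every ball of a fixed small radius that avoids the zeros of $q$.

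First I would reduce. The inequality $\int_X\bigl||q|/\rho^2-1\bigr|^2\rho^2 > \delta^2\int_X\rho^2$ is unaffected by rescaling $q$, so I may renormalise so that $\|q\|_{\WP}^2 = \int_X|q|^2\rho^{-2} = \int_X\rho^2 = \Area(X)$, i.e.\ $\fint_X f^2\,dA_h = 1$; if instead $\|q\|_{\WP}^2$ is far from $\Area(X)$, the triangle inequality $\bigl\||q|/\rho^2-1\bigr\|_{L^2(\rho^2)}\ge\bigl|\|q\|_{\WP}-\sqrt{\Area(X)}\,\bigr|$ already gives the conclusion. With this normalisation $\fint_X|f-1|^2\,dA_h = 2\bigl(1-\fint_X f\,dA_h\bigr)$ and $\fint_X f\,dA_h\le 1$ by Cauchy--Schwarz, so the lemma becomes equivalent to $\var_{dA_h}(f)\ge c_1$ for a universal $c_1>0$; one may also assume $\var_{dA_h}(f)<\tfrac34$, whence $\bar f := \fint_X f\,dA_h>\tfrac12$.

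The geometric input is that $|q| = f\cdot(\rho^2|dz|^2)$ is flat with conical singularities (of angle $\ge 3\pi$) at the zero set $Z$ of $q$; equivalently $\log f$ satisfies the conformal-factor equation $\Delta_h\log f\equiv -2$ on $X\setminus Z$, with $\Delta_h = \rho^{-2}\Delta$. Since $\#Z = 4g-4$ (counted with multiplicity) while $\Area(X)=4\pi(g-1)$, and a hyperbolic disk of radius $r_0$ has area at most $2\pi(\cosh r_0-1)$, a packing argument gives a fixed small $r_0>0$ for which $\Omega := \{x : d_h(x,Z)>r_0\}$ has $\Area(\Omega)\ge\tfrac12\Area(X)$, and on $\overline{B_h(p,r_0)}$ with $p\in\Omega$ the function $\log f$ is smooth and solves $\Delta_h\log f\equiv -2$. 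Next, because all balls $B_h(p,r_0)$ are isometric copies of one fixed hyperbolic disk, a compactness argument (a limit of normalised solutions that were $L^2$-close to constants would satisfy $\Delta_h(\mathrm{const})=-2$, which is impossible; uniform interior elliptic estimates provide the compactness) produces a universal $\kappa>0$ with $\inf_{a\in\R}\|w-a\|_{L^2(B_h(p,r_0/2))}\ge\kappa$ for every $w$ solving $\Delta_h w\equiv -2$ on $B_h(p,r_0)$. Taking $w=\log f$ shows $\log f$ has variance bounded below by a universal constant on each such half-ball; together with $\bar f>\tfrac12$ this yields, on each half-ball, a point $x$ with $|f(x)-\bar f|\ge\lambda\bar f$ for a universal $\lambda>0$. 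Finally I would cover the definite-area set $\Omega$ by such half-balls with bounded overlap and add up the local $L^2$-deviations to get $\int_X|f-\bar f|^2\,dA_h\ge c''\,\bar f^2\,\Area(X)$, hence $\var_{dA_h}(f)\ge c'''>0$.

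The hard part will be this last globalisation step --- upgrading ``$\log f$ has definite variance on each small ball of $\Omega$'' to ``$f$ has definite variance on $X$'', with all constants genuinely independent of the genus and of the hyperbolic structure. Two points need care: (i) there is no lower bound on the injectivity radius, so both the covering and the local estimates have to be carried out on the universal cover, where the balls are honestly embedded, and then transferred downstairs; and (ii) one must convert an $L^2$-oscillation of $\log f$ into an $L^2$-deviation of $f$ from $\bar f$ --- this is automatic on the set where $f<\bar f/2$, while on the rest one uses $f=e^{\log f}$ and the lower bound $\log f\ge\log(\bar f/2)$, but controlling the contribution of the region near $Z$, where $\log f\to-\infty$, while keeping the constants uniform is the delicate bookkeeping.
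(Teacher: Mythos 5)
Your reduction to a uniform variance bound, the curvature identity $\Delta_h \log f=-2$ away from the zeros, the packing argument producing $\Omega$, and the compactness estimate $\inf_{a\in\R}\|w-a\|_{L^2(B_h(p,r_0/2))}\ge\kappa$ for solutions of $\Delta_h w=-2$ are all sound (the opening claim that the inequality is ``unaffected by rescaling $q$'' is literally false, since the constant $1$ does not rescale, but your triangle-inequality fallback repairs that). The genuine gap is the globalisation, and it is not mere bookkeeping: the local information you extract --- a lower bound on $\inf_a\|\log f-a\|_{L^2}$ over each half-ball --- does \emph{not} imply any lower bound on $\int_B|f-\bar f|^2\,\rho^2\,dxdy$. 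The forced $L^2$-oscillation of $\log f$ may be carried entirely by a set of tiny measure on which $\log f$ is very negative: a deep, narrow valley of the harmonic function $\log|\tilde q|$ is perfectly possible on a ball avoiding the zeros (by Runge/Walsh-type harmonic approximation one can even make a harmonic function uniformly close to the non-harmonic function $\log\rho^2+\mathrm{const}$ on a ball minus a set of arbitrarily small measure), and such a configuration contributes only on the order of $\bar f^2$ times the measure of the exceptional set to $\int_B|f-\bar f|^2\rho^2$. Your two patches do not close this: a single point with $|f(x)-\bar f|\ge\lambda\bar f$ carries no $L^2$ mass, so there are no ``local $L^2$-deviations'' to add up; and the dichotomy ``automatic where $f<\bar f/2$'' helps only if that sublevel set has definite measure in the ball, which is exactly what fails in the valley scenario. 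So the argument as written does not go through: what is missing is a local estimate on the deviation of $f$ itself rather than of $\log f$.

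For comparison, the paper's proof avoids all of this by running the compactness argument directly on $|h|/\rho^2-1$: for every $h$ holomorphic in a neighbourhood of $\overline{B(0,1/2)}$ one has $\int_{B(0,1/2)}\bigl||h|/\rho^2-1\bigr|^2\rho^2\,dxdy\ge\delta^2\int_{B(0,1/2)}\rho^2\,dxdy$, since a minimising sequence is $L^2$-bounded and a minimiser would force $|h|=\rho^2$, impossible because $\Delta\log|h|=0$ while $\Delta\log\rho^2=2\rho^2$. By M\"obius invariance (transforming $h$ as a quadratic differential) the same estimate holds on every hyperbolic ball of the fixed radius, and the global inequality follows by lifting $q$ to the universal cover and averaging these local bounds over balls centred on the circles $\{|z|=r\}$, using equidistribution of their projections as $r\to1$. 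No normalisation of $q$, no exclusion of zeros, and no comparison with a mean value are needed, because the constant $1$ in the local inequality is the same constant as in the global statement. If you wish to salvage your scheme, replace your log-based local lemma by this one: rescaling gives $\int_{B}\bigl||q|/\rho^2-c\bigr|^2\rho^2\ge\delta^2c^2\int_{B}\rho^2$ for every constant $c>0$, and then your bounded-overlap cover of a definite-area set (together with the case $\bar f\le 1/2$, which you already handle) finishes the proof --- at which point the zeros of $q$ and the equation for $\log f$ never need to be mentioned.
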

 \begin{proof}
 To prove the lemma, we first show that there exists a $\delta > 0$ such that
 \begin{equation}
\label{eq:positivity2}
\int_{B(0,1/2)} \, \biggl | \frac{|h|}{\rho^2} - 1 \biggr |^2 \, \rho^2 dxdy \, > \, \delta^2 \int_{B(0,1/2)} \, \rho^2 dxdy,
\ \ \text{with }\rho = \rho_{\mathbb{D}},
\end{equation}
for any function $h$ holomorphic in a neighbourhood of $\overline{B(0,1/2)}$.
This follows from a simple compactness argument, since any potential minimiser $h$ has bounded $L^2$ norm on $B(0,1/2)$, and  $\rho^2$ is not the absolute value of any holomorphic function. One way to see this
is to observe that $\Delta \log|h| = 0$ but $\Delta \log \rho^2 = 2 \rho^2$ (the Poincar\'e metric has constant curvature $-1$).

Replacing $h(z)$ by $h(\gamma(z))\gamma'(z)^2$, $\gamma \in \Aut(\mathbb{D})$ we see that the estimate (\ref{eq:positivity2}) holds on any ball $B_{\hyp}(z, R) \subset \mathbb{D}$ of hyperbolic radius $R= d_{\mathbb{D}}(0,1/2)$.
Choose a covering map $\pi: \mathbb{D} \to X$ and lift $\tilde q = \pi^*q$ to the disk.
In view of equidistribution, to obtain (\ref{eq:positivity}), we take $h = \tilde q$ and average
 (\ref{eq:positivity2}) over balls $B_{\hyp}(z,R)$ whose centers lie on $\{z : |z| = r\}$ with $r \approx 1$.
 Taking $r \to 1$ completes the proof.
 \end{proof}
 
\begin{remark} Note that Theorem \ref{fuchsian-case} does not show that the limit sets $w^{t\mu}(\mathbb{S}^1)$ cannot be expressed as
quasicircles of dimensions greater than $1+(2/3)k(t)^2$, for $t$ small, only that representations using {\em invariant} Beltrami coefficients are inadequate.

In fact, Kra's $\theta$ conjecture (proved by McMullen in \cite{mcm-amen}, see \cite{stergios-theta} for a simple proof) implies that given an invariant Beltrami coefficient
$\mu \in M_\Gamma(\mathbb{D})$, there necessarily exists a (non-invariant) Beltrami coefficient $\nu \in M(\mathbb{D})$
infinitesimally equivalent to $\mu$ with $\|\nu\|_\infty < \|\mu\|_\infty$.
\end{remark}

\section{Dynamical analogue of asymptotic variance}
\label{sec:dynamics}

In this section, we discuss the notion of asymptotic variance of a H\"older continuous potential from thermodynamic formalism.
 Using a global analogue of McMullen's coboundary equation \cite[Theorem 4.5]{mcmullen},
we relate it to the notion of asymptotic variance of a Bloch function considered earlier. As an application,
we obtain estimates for the integral means spectrum of univalent functions.

For concreteness, we work with a certain class of fractals arising from quasiconformal deformations of Blaschke products and leave the general case to the reader, see Remark \ref{rem:general-case}.
\subsection{Thermodynamic formalism} 
Let $$B(z) = z \prod_{i=1}^{d-1} \frac{z-a_i}{1-\overline{a_i}z}, \qquad a_i \in \mathbb{D},$$ be a  finite Blaschke product, which we think of
as a map from the unit circle to itself.
Let $m$ denote the Lebesgue measure on the unit circle, normalised to have total mass 1. It is well-known that the Lebesgue measure
is invariant under $B$, that is, $m(E) = m(B^{-1}(E))$.

For a  H\"older continuous potential  $\phi \in C^\alpha({\mathbb S}^1)$
of mean zero, i.e.~with $\int \phi dm = 0$, the ``dynamical'' asymptotic variance is given by
\begin{equation}
\var(\phi) := \lim_{n \to \infty}  \frac{1}{n} \int_{{\mathbb S}^1} |S_n \phi(z)|^2 dm,
\end{equation}
where $S_n \phi(z) = \sum_{k=0}^{n-1} \phi(B^{\circ k}(z))$. More generally, for  $\phi, \psi \in C^\alpha({\mathbb S}^1)$ with
$\int \phi dm = \int \psi dm = 0$, one may consider the {\em covariance}
\begin{equation}
\var(\phi, \psi) := \lim_{n \to \infty}  \frac{1}{n} \int_{{\mathbb S}^1} S_n \phi(z) \overline{S_n \psi(z)} dm.
\end{equation} 
To show that $\var(\phi)$ and $\var(\phi, \psi)$ are well-defined, one may use the exponential decay of correlations,
\cite[Theorem 4.4.9]{PU}
or \cite[Proposition 2.4]{PP},
\begin{equation}
\label{eq:decay-corr}
\int_{{\mathbb S}^1} \phi(B^{\circ j}(z)) \overline{\psi(B^{\circ k}(z))} dm \le K \theta^{k-j} \|\phi\|_{C^\alpha} \|\psi\|_{L^1}, \quad j \le k,
\end{equation}
for some  $0 < \theta(\alpha, B) < 1$. 
 In particular, the functions
 \begin{equation}
 \var_n(\phi) := \frac{1}{n} \int_{{\mathbb S}^1} |S_n \phi(z)|^2 dm
 \end{equation} converge uniformly to $\var(\phi)$.

Following \cite{mcmullen}, we say that $h \in C^{\alpha}({\mathbb S}^1)$ is a {\em virtual coboundary} of  $g \in C(A(1,R))$, $R > 1$, if
the difference
$
g(z) - g(B(z))
$
extends to a continuous function on the unit circle and the extension coincides with $h$. We will need the following fundamental result about virtual coboundaries:
\begin{theorem}
\label{virtual-cocycle-thm}
 \cite[Theorem 4.1] {mcmullen}
Suppose $h \in C^\alpha(\mathbb{S}^1)$, $0 < \alpha < 1$, of mean zero, can be expressed as a virtual coboundary $g(z) - g(B(z))$.
Then the limit in the definition of $\sigma^2(g)$ exists and
\begin{equation}
\frac{\var(h)}{\int \log|B'|dm} = \sigma^2(g).
\end{equation}
\end{theorem}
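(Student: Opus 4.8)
\emph{Proof idea.} The plan is to transfer the boundary integral defining $\sigma^2(g)$ to a dynamical Birkhoff sum average by iterating the coboundary relation along orbits that escape the unit circle. Extend $B$ to a holomorphic map of an annular neighbourhood of $\mathbb{S}^1$; since a finite Blaschke product of degree $\ge 2$ with a fixed point in $\mathbb{D}$ is uniformly expanding on $\mathbb{S}^1$ (indeed $|B'(z)| = 1 + \sum_i \tfrac{1-|a_i|^2}{|z-a_i|^2} > 1$ there), an orbit starting just outside $\mathbb{S}^1$ is pushed geometrically away from it. Fix $r_0 \in (1,R)$; for $1 < r < R$ and $\zeta \in \mathbb{S}^1$ let $n = n(r\zeta)$ be the first integer with $|B^{\circ n}(r\zeta)| \ge r_0$. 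Telescoping gives
\begin{equation*}
g(r\zeta) = g(B^{\circ n}(r\zeta)) + \sum_{k=0}^{n-1} \bigl[ g - g\circ B\bigr](B^{\circ k}(r\zeta)),
\end{equation*}
whose first term is bounded. In each summand I would replace $[g-g\circ B](B^{\circ k}(r\zeta))$ by $h\bigl((B|_{\mathbb{S}^1})^{\circ k}(\zeta)\bigr)$: since $g - g\circ B$ extends continuously to $\mathbb{S}^1$ with boundary value $h$ and $h \in C^\alpha$, the error at step $k$ is controlled by a H\"older modulus evaluated at $|B^{\circ k}(r\zeta)| - 1$, which grows geometrically from $\asymp (r-1)$ at $k=0$ to $\asymp 1$ at $k=n$. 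Summing the resulting geometric series (this is where H\"older, equivalently Dini, regularity is used) shows the accumulated error is \emph{uniformly bounded} in $r$ and $\zeta$, so $g(r\zeta) = S_n h(\zeta) + O(1)$ with $n = n(r\zeta)$.

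Next I would pin down the escape time. The condition $|B^{\circ n}(r\zeta)| \asymp 1$ reads $\log(r-1) + S_n \log|B'|(\zeta) = O(1)$, i.e.
\begin{equation*}
S_{n(r\zeta)} \log|B'|(\zeta) = |\log(r-1)| + O(1).
\end{equation*}
Since Lebesgue measure on $\mathbb{S}^1$ is exact (hence ergodic) for $B$ — via the exponential decay of correlations \eqref{eq:decay-corr} — the Birkhoff sums of $\log|B'|$ obey the law of large numbers with a large deviation estimate, so $n(r\zeta) = \frac{|\log(r-1)|}{\chi}\bigl(1+o(1)\bigr)$ for $m$-a.e.\ $\zeta$, where $\chi = \int \log|B'|\,dm$, with the exceptional set of $m$-measure tending to $0$ exponentially fast as $r \to 1$. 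Using that the radial projection $r\zeta \mapsto \zeta$ carries $\tfrac{d\theta}{2\pi}$ on $\{|z|=r\}$ exactly to $m$ on $\mathbb{S}^1$, together with $|g(r\zeta)|^2 = |S_{n(r\zeta)}h(\zeta)|^2 + O(|S_{n(r\zeta)}h(\zeta)|) + O(1)$ and Cauchy--Schwarz,
\begin{equation*}
\frac{1}{2\pi}\int_0^{2\pi} |g(re^{i\theta})|^2\,d\theta = \int_{\mathbb{S}^1} |S_{n(r\zeta)} h(\zeta)|^2\,dm(\zeta) + O\!\left( \Bigl( \textstyle\int_{\mathbb{S}^1}|S_{n(r\zeta)}h|^2\,dm \Bigr)^{1/2} \right) + O(1).
\end{equation*}
Writing $N = N(r) = \lfloor |\log(r-1)|/\chi \rfloor$ and comparing $S_{n(r\zeta)}h$ with $S_N h$ — their difference involves at most $|n(r\zeta) - N| = o(N)$ terms, estimated by Cauchy--Schwarz and \eqref{eq:decay-corr} — the main term becomes $\int_{\mathbb{S}^1} |S_N h|^2\,dm = N\,\var_N(h)$. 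Since $\var_N(h) \to \var(h)$ and $N\chi = |\log(r-1)| + O(1)$, dividing by $|\log(r-1)|$ and letting $r \to 1^+$ shows that the limit in the definition of $\sigma^2(g)$ exists and equals $\var(h)/\int \log|B'|\,dm$.

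The main obstacle is making the two passages rigorous and mutually compatible. First, the boundedness of the telescoped error rests on combining the H\"older/Dini modulus of continuity of $g - g\circ B$ (and of $h$) up to the boundary with precise distortion control: one must verify that $|B^{\circ k}(r\zeta)| - 1$ grows geometrically along the orbit and that $B^{\circ k}(r\zeta)/|B^{\circ k}(r\zeta)|$ stays within $O((r-1)\Lambda_0^k)$ of $(B|_{\mathbb{S}^1})^{\circ k}(\zeta)$, a Koebe-type estimate near $\mathbb{S}^1$. Second, converting the a.e.\ ergodic statement about $n(r\zeta)$ into the uniform $L^2$-type control needed to swap $S_{n(r\zeta)}h$ for $S_{N(r)}h$ requires genuine large deviation bounds, strong enough that the $\zeta$ with $n(r\zeta)$ far from $|\log(r-1)|/\chi$ contribute negligibly even against the crude bound $|S_n h|^2 = O(n^2)$. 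Once these are secured, \eqref{eq:decay-corr} and the identity $\var(h) = \lim_n \var_n(h)$ close the argument.
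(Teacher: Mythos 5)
First, a point of orientation: the paper does not actually prove this statement --- it is quoted from McMullen \cite[Theorem 4.1]{mcmullen}, so there is no in-paper argument to compare against. Your outline is essentially the expected reconstruction of that argument: telescope the coboundary along an orbit until it escapes a fixed collar, identify the escape time with $|\log(r-1)|/\int\log|B'|\,dm$ by ergodicity of $m$, and convert the circle integral of $|g|^2$ into $\int|S_nh|^2\,dm\approx n\var(h)$. The architecture is the right one, and your observations about expansion of $B$ on $\mathbb{S}^1$ and the radial pushforward of arclength measure are correct.

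There is, however, a genuine gap at the step you rely on most: the claim that the accumulated replacement error is \emph{uniformly bounded}. The hypotheses give only that $g(z)-g(B(z))$ extends \emph{continuously} to $\mathbb{S}^1$ with boundary value $h$, and that $h\in C^\alpha$ \emph{on the circle}; your assertion that ``the error at step $k$ is controlled by a H\"older modulus evaluated at $|B^{\circ k}(r\zeta)|-1$'' tacitly assumes a H\"older (or at least Dini) modulus for the extension of $g-g\circ B$ in the \emph{radial} direction on a closed collar, which is not part of the statement. Splitting the $k$-th error into a tangential part (controlled by the $C^\alpha$-norm of $h$ times the shadowing distance, and hence summable geometrically) and a radial part $|(g-g\circ B)(w)-h(w/|w|)|\le\eta(\dist(w,\mathbb{S}^1))$, where $\eta$ is merely a modulus of continuity, the radial contributions sum over geometrically growing scales and are in general only $o(n)$, not $O(1)$; and $o(n)$ is too weak for your Cauchy--Schwarz bookkeeping, since the cross term is then $o(n)\cdot\bigl(n\var(h)\bigr)^{1/2}=o(n^{3/2})$, which does not vanish after dividing by $|\log(r-1)|\asymp n$. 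So you must either verify a Dini-type rate of radial convergence (this does hold in the paper's application in Section \ref{sec:dynamics}, where the extension $\psi$ is H\"older on a closed collar because $H$ is quasiconformal, hence H\"older, up to $\mathbb{S}^1$), or route around the boundary-regularity issue, e.g.\ via the derivative averages $\sigma^2_{2m}$ of Lemma \ref{lemma:sigmas}. In addition, the second half of your plan --- the large-deviation control of $n(r\zeta)$ and the $L^2$ exchange of $S_{n(r\zeta)}h$ for $S_{N(r)}h$ (which needs $\int|S_{N+j}h-S_Nh|^2\,dm\lesssim |j|$ from \eqref{eq:decay-corr}, a maximal or union bound over $|j|\le\varepsilon N$, and a bad-set estimate strong enough to beat the trivial bound $|S_nh|\lesssim n$) --- is announced rather than carried out, as you yourself note. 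As it stands this is a credible route map in the spirit of McMullen's proof, not yet a proof.
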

\begin{remark}
By itself, the fact that $h$ is a virtual coboundary does not guarantee that $h$ has mean 0. For instance, if $B(z) = z^2$, then the constant
function $h(z) = \log 2$ is a  virtual coboundary of $g(z) = \log\frac{1}{|z|-1}$. Using arguments similar to those discussed in \cite[Section 3]{mcmullen},
one can show the relation
\begin{equation}
\label{eq:meanofh}
\frac{\int_{\mathbb{S}^1} h(z) dm}{\int_{\mathbb{S}^1} \log|B'|dm} \, = \, \lim_{R \to 1^+} \frac{1}{2\pi|\log(R-1)|} \int_{|z|=R} g(z) \, |dz|.
\end{equation}
\end{remark}

\subsection{Conformal maps with fractal boundaries}
Suppose $\mu \in M_{B}(\mathbb{D})$ is an eventually-invariant Beltrami coefficient supported on the unit disk with
$\|\mu\|_\infty < 1$. 
By solving the Beltrami equation, $\mu$ gives rise to a conformal map $H(z) = w^\mu(z)$ of the exterior unit disk.
We may use the conjugacy $H$ to transfer the dynamics of $B$ to $H(\mathbb{S}^1)$, in which case the map $$F = H \circ B \circ H^{-1}$$ is a dynamical system on the image curve $H(\mathbb{S}^1)$.
Since $\mu$ is eventually-invariant, the map $F$ is holomorphic in a neighbourhood of $\overline{H(\mathbb{D}^*)}$.

Associated to the map $F$, we define the potential
\begin{eqnarray}
\label{eq:virtual1}
\psi(z) & = &  \log F'(H(z)) - \log B'(z), \qquad 1-\varepsilon < |z| < 1 + \varepsilon.\\
\label{eq:virtual}
 & = & \log H'(B(z)) - \log H'(z), \qquad z \in \mathbb{D}^*.
\end{eqnarray}
Observe that the two definitions are complementary to each other: the first definition makes sense near the unit circle, 
while the second definition is good in the exterior unit disk but does not work on the unit circle. To see the equivalence of the two definitions,
it suffices to differentiate the conjugacy relation $F \circ H = H \circ B$.

The first definition implies that $\psi$ is H\"older continuous on the unit circle. 
The virtual coboundary condition (\ref{eq:virtual}) together with the fact that $B(\infty) = \infty$ guarantee that 
$$\int_{\mathbb{S}^1} \psi(z) dm = 0$$ by the mean-value theorem.
Applying Theorem \ref{virtual-cocycle-thm} gives
\begin{equation}
\label{eq:missinglink}
\frac{\var(\psi)}{\int \log|B'|dm} = \sigma^2(\log H').
\end{equation}
As was explained in the introduction, this identity completes the proof of Theorem \ref{dynamical-connections}.

\begin{remark}
\label{rem:general-case}
The argument presented here (with some modifications) also applies to a wider class of fractals known as {\em Jordan repellers} $(J, F)$ which are defined
by the following conditions:

(i) The set  $J$ is a Jordan curve,
presented as a union of closed arcs
 $J = J_1 \cup J_2 \cup \dots \cup J_n,$ with pairwise disjoint interiors.

(ii) For each $i = 1, 2, \dots, n$, there exists a univalent function $F_i: U_i \to \mathbb{C},$ defined on a neighbourhood $U_i \supset J_i$, such that $F_i$ maps $J_i$ bijectively onto the union of several arcs, i.e.~$$F_i(J_i) = \bigcup_{j \in \mathcal A_i} J_j,$$ 

(iii) Additionally, we want each map $F_i$ to preserve the  complementary regions $\Omega_\pm$ in $\mathbb S^2 \setminus J$, i.e.~$F_i(U_i \cap \Omega_\pm) \subset \Omega_\pm$.

(iv) We require that the {\em Markov map} $F: J \to J$   defined by $F|_{J_i} = F_i$ is {\em mixing}, that is, for a sufficiently high iterate, we have $F^{\circ N}(J_i) = J$.

(v) Finally, we want the dynamics of $F$ to be {\em expanding}, i.e.~for some $N \ge 1$, we have $\inf_{z \in J} 
|(F^{\circ N})'(z)| > 1$. (At the endpoints of the arcs  and their inverse orbits under $F$, we consider one-sided derivatives.)

This definition subsumes limit sets of quasi-Fuchsian groups and piecewise linear constructions such as snowflakes, see \cite{makarov99, mcmullen, PU, PUZ2}.
Note that for some purposes, one can allow $\bigcup J_i$ to be a proper subset of $J$; however, for connections to asymptotic variance, we must insist on the equality $J = \bigcup J_i$.

\end{remark}

\subsection{Dynamical families of conformal maps}

Given  $\mu \in M_{B}(\mathbb{D})$ with
$\|\mu\|_\infty \le 1$ as before, we may consider a natural holomorphic family of conformal maps
$H_t(z) = w^{t \mu}(z)$, $t \in \mathbb{D}$. We denote the associated dynamical systems and H\"older continuous potentials by $F_t$ and $\psi_t$ respectively.
In this formalism, $F_0 = B$.

If we restrict the parameter $t \in B(0, \rho)$ to a disk  of slightly smaller radius $\rho < 1$,
then H\"older bounds for quasiconformal mappings \cite[Theorem 3.10.2]{AIMb} imply the uniform estimate
\begin{equation}
\biggl\| \frac{\psi_t(z)}{t} \biggr\|_{C^\alpha(\mathbb{S}^1)} < \, K(\rho), \qquad \text{for some }0 < \alpha < 1.
\end{equation}
To prove Theorem \ref{thm:beta}, we consider the function
\begin{equation}
u(t) = \sigma^2 \biggl(\frac{\log H_t'}{t} \biggr), \qquad t \in \mathbb{D}.
\end{equation}
Observe that $u(t)$ extends continuously to the origin with $u(0) = \sigma^2(\mathcal S\mu)$. Indeed, the differentiability of the $\mathcal B$-valued analytic function $\log H_t'$ at the origin implies that
\begin{equation}
\biggl \| \frac{\log H_t'}{t} - \mathcal S\mu \biggr \|_{\mathcal B} =\mathcal O(|t|),
\end{equation}
from which the continuity of $u$ follows from the continuity of $\sigma^2(\cdot)$ in the Bloch norm.

\begin{theorem}
\label{thm:u-subharmonic}
The function $u(t)$
is real-analytic and subharmonic on the unit disk.
\end{theorem}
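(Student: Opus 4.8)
The plan is to reduce everything to the \emph{dynamical} variance $\var(\cdot)$ of a holomorphic family of H\"older potentials, and then to exploit that $\var$ is a bounded Hermitian form. Set $\Psi_t := \psi_t/t$; since $H_0 = \id$ we have $\psi_0 \equiv 0$ by \eqref{eq:virtual}, so the singularity at $t=0$ is removable and $\Psi_t$ is defined for all $t \in \mathbb{D}$, with $\Psi_0 = \mathcal{S}\mu \circ B - \mathcal{S}\mu$. Applying the missing-link identity \eqref{eq:missinglink} to the coefficient $t\mu$ (legitimate since $\|t\mu\|_\infty < 1$) and using $\sigma^2(cg)=|c|^2\sigma^2(g)$ and $\var(c\phi)=|c|^2\var(\phi)$ gives, for $t \neq 0$,
\[
u(t) \;=\; \sigma^2\!\Bigl(\tfrac{\log H_t'}{t}\Bigr) \;=\; \frac{\var(\Psi_t)}{\int_{\mathbb{S}^1}\log|B'|\, dm},
\]
and since both sides are continuous on $\mathbb{D}$ (continuity of $u$ was noted above), this persists at $t=0$. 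As $\int_{\mathbb{S}^1}\log|B'|\, dm>0$ is a fixed constant, it suffices to prove that $t\mapsto\var(\Psi_t)$ is real-analytic and subharmonic on $\mathbb{D}$.

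First I would check that $t\mapsto\Psi_t$ is a holomorphic family of functions valued in $C^\alpha(\mathbb{S}^1)$. Using the representation \eqref{eq:virtual1}, $\psi_t = \log F_t'\circ H_t - \log B'$ on a fixed annular neighbourhood of $\mathbb{S}^1$, where $F_t$ and $H_t$ depend holomorphically on $t$ and are holomorphic in that annulus; restricting to $\mathbb{S}^1$ exhibits $t\mapsto\psi_t$, hence $t\mapsto\Psi_t$, as a $C^\alpha$-valued holomorphic map on $\mathbb{D}$ (the uniform bound $\|\Psi_t\|_{C^\alpha}<K(\rho)$ for $|t|\le\rho$ recorded above makes the vector-valued holomorphy criterion applicable). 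Expanding, $\Psi_t=\sum_{n\ge 0}t^n\psi^{(n)}$ with $\psi^{(n)}\in C^\alpha(\mathbb{S}^1)$ and the series converging in $C^\alpha$ locally uniformly on $\mathbb{D}$; differentiating $\int_{\mathbb{S}^1}\Psi_t\, dm = 0$ (valid for all $t$) shows each $\psi^{(n)}$ has mean zero.

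The key input is that the covariance $(\phi,\psi)\mapsto\var(\phi,\psi)$ extends to a bounded sesquilinear form on the mean-zero subspace of $C^\alpha(\mathbb{S}^1)$, with $|\var(\phi,\psi)|\le C\,\|\phi\|_{C^\alpha}\,\|\psi\|_{L^1}$, and $\var_n\to\var$ uniformly on $C^\alpha$-bounded sets; both follow from the exponential decay of correlations \eqref{eq:decay-corr} by summing the Green--Kubo series. Granting this, $\var(\Psi_t)=\var(\Psi_t,\Psi_t)=\sum_{n,m\ge 0}t^n\bar t^m\,\var(\psi^{(n)},\psi^{(m)})$, and the bound on $\var$ together with $\|\psi^{(n)}\|_{C^\alpha}=\mathcal{O}(R^{-n})$ for each $R<1$ gives absolute, locally uniform convergence; hence $\var(\Psi_t)$ is a convergent power series in $t$ and $\bar t$, in particular real-analytic.

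For subharmonicity I would argue in either of two equivalent ways. Term-by-term differentiation of the series gives
\[
\partial_t\bar\partial_t\,\var(\Psi_t)\;=\;\sum_{n,m\ge 1}nm\,t^{n-1}\bar t^{m-1}\,\var(\psi^{(n)},\psi^{(m)})\;=\;\var(\partial_t\Psi_t,\partial_t\Psi_t)\;=\;\var(\partial_t\Psi_t)\;\ge\;0,
\]
since $\var(\phi)=\lim_n\tfrac1n\int_{\mathbb{S}^1}|S_n\phi|^2\, dm\ge 0$ for any mean-zero $\phi$ (and $\partial_t\Psi_t$ has mean zero). Alternatively, for each $z$ the map $t\mapsto S_n\Psi_t(z)=\sum_{k=0}^{n-1}\Psi_t(B^{\circ k}z)$ is holomorphic, so $|S_n\Psi_t(z)|^2$ is subharmonic in $t$; averaging over $z$ shows $\var_n(\Psi_t)=\tfrac1n\int_{\mathbb{S}^1}|S_n\Psi_t|^2\, dm$ is subharmonic, and $\var(\Psi_t)$ is the locally uniform limit of these, hence subharmonic. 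Either way, $u(t)$ is a positive constant multiple of a real-analytic subharmonic function, which proves Theorem~\ref{thm:u-subharmonic}. The one non-formal step is the quantitative control of the dynamical covariance, i.e.\ the bound $|\var(\phi,\psi)|\le C\|\phi\|_{C^\alpha}\|\psi\|_{L^1}$ and the uniform convergence $\var_n\to\var$: this is where the mixing/decay-of-correlations estimate \eqref{eq:decay-corr} does the real work, and together with the $C^\alpha$-valued holomorphy of $t\mapsto\psi_t$ it is the main point to nail down carefully.
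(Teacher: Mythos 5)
Your proposal is correct and follows essentially the same route as the paper: the paper likewise identifies $u(t)$ (up to the constant $\int\log|B'|\,dm$) with the dynamical variance of $\psi_t/t$, notes that the functions $u_n(t)=\var_n(\psi_t/t)$ are subharmonic (your second argument for subharmonicity), passes to the locally uniform limit using the decay of correlations \eqref{eq:decay-corr}, and handles real-analyticity by the same power-series/Green--Kubo expansion you describe (delegated there to \cite[Section 7]{PUZ2}). One small correction: since \eqref{eq:decay-corr} is stated only for $j\le k$, the covariance bound should be the symmetric one, $|\var(\phi,\psi)|\le C\bigl(\|\phi\|_{C^\alpha}\|\psi\|_{L^1}+\|\psi\|_{C^\alpha}\|\phi\|_{L^1}\bigr)$, rather than $C\|\phi\|_{C^\alpha}\|\psi\|_{L^1}$ alone, which changes nothing in your application.
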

In particular, Theorem \ref{thm:u-subharmonic} shows that there exists a $t \in \mathbb{D}$, with $|t|$ arbitrarily close to 1, for which $u(t) \ge u(0)$.
Theorem \ref{dynamical-connections} implies
$$
 \liminf_{\tau \to 0} \frac{\beta_{H_t}(\tau)}{\tau^2/4} \ge |t|^2 \sigma^2(\mathcal S\mu).
$$
Taking the supremum over all eventually-invariant Beltrami coefficients $\mu$, $|t| \to 1$, 
 and using Theorem \ref{fractal-approximation} gives Theorem \ref{thm:beta}.

\begin{proof}[Proof of Theorem \ref{thm:u-subharmonic}]
We utilise the connection between $\sigma^2$ and the dynamical asymptotic variance. It is easy to see that the functions
$$u_n(t) = \var_n(\psi_t(z)/t), \qquad n = 1,2, \dots$$ are subharmonic. By the decay of correlations (\ref{eq:decay-corr}), the $u_n(t)$ converge uniformly to $u$
on compact subsets of the disk, hence $u(t)$ is subharmonic as well. The same argument can also be used to show the real-analyticity of $u$, for details, we refer the reader to \cite[Section 7]{PUZ2}.
\end{proof}

\subsection{Using higher-order terms}

 We now slightly refine the estimate from the previous section by taking advantage of the subharmonicity of the functions $\Delta^n u$ for $n \ge 1$. However, we do not know if these estimates improve upon Theorem \ref{thm:beta}, since the higher-order terms may be close to 0, when 
$\sigma^2(\mathcal S \mu)$ is close to $\Sigma^2$.

\begin{theorem}
One has
$$
\partial_t^j \overline{\partial}_t^k  u(t) = \sigma^2 \biggl( \partial_t^j \frac{  \log H_t'}{t}, \ \overline{\partial}_t^k \frac{  \log H_t'}{t} \biggr), \qquad t \in \mathbb{D}.
$$
\end{theorem}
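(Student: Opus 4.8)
The plan is to transport the computation to the dynamical (co)variance of H\"older potentials under the Blaschke product $B$, where the relevant limit is an honest limit carrying uniform estimates, and then differentiate under that limit. Throughout, regard $u$ as a real-analytic function of $(t,\overline t)$ on $\mathbb{D}$ (Theorem \ref{thm:u-subharmonic}), write $g_t := \log H_t'/t$ for the associated $\mathcal B^*$-valued holomorphic function, and interpret $\sigma^2(\cdot,\cdot)$ as the polarised asymptotic variance
\[
\sigma^2(\phi,\psi)=\frac{1}{2\pi}\limsup_{R\to1^+}\frac{1}{|\log(R-1)|}\int_{|z|=R}\phi(z)\,\overline{\psi(z)}\,|dz|,
\]
which is linear in $\phi$, conjugate linear in $\psi$, and agrees with $\sigma^2$ on the diagonal; in these terms the claim reads $\partial_t^j\overline\partial_t^k u(t)=\sigma^2\bigl(\partial_t^j g_t,\ \partial_t^k g_t\bigr)$. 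Recall from \eqref{eq:virtual1}--\eqref{eq:missinglink} that $\psi_t/t$ is the mean-zero H\"older virtual coboundary of $g_t$, so Theorem \ref{virtual-cocycle-thm} gives $u(t)=\var(\psi_t/t)\big/\!\int\log|B'|\,dm$.

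First I would record a polarised form of Theorem \ref{virtual-cocycle-thm}: for two $\mathcal B^*$-functions admitting mean-zero H\"older virtual coboundaries $h_\phi,h_\psi$, one has $\sigma^2(\phi,\psi)=\var(h_\phi,h_\psi)\big/\!\int\log|B'|\,dm$. This follows from the diagonal statement by polarisation, the key point being that for functions of this dynamical type the $\limsup$ in $\sigma^2$ and the $\lim$ in $\var$ are genuine limits, so the identity $4\,\sigma^2(\phi,\psi)=\sigma^2(\phi+\psi)-\sigma^2(\phi-\psi)+i\sigma^2(\phi+i\psi)-i\sigma^2(\phi-i\psi)$ and its $\var$-analogue are legitimate. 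Next I would check that $t$-differentiation preserves the class of objects to which this applies: by \eqref{eq:virtual1} the map $t\mapsto\psi_t/t$ is a holomorphic $C^\alpha(\mathbb S^1)$-valued function on $\mathbb{D}$ (holomorphic because $\psi_t$ is, divisible by $t$ because $\psi_0=0$), and since the coboundary operation $\phi\mapsto \phi-\phi\circ B$ is linear it commutes with $\partial_t$; hence $\partial_t^j(\psi_t/t)$ is again a H\"older virtual coboundary, namely of $\partial_t^j g_t$, and it is mean zero (differentiate $\int(\psi_t/t)\,dm=0$ in $t$). So the claim reduces to the identity $\partial_t^j\overline\partial_t^k\,\var(\psi_t/t)=\var\bigl(\partial_t^j(\psi_t/t),\ \partial_t^k(\psi_t/t)\bigr)$.

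To prove this I would work at finite level first: writing $\phi_t:=\psi_t/t$, since $S_n$ is linear and $t\mapsto\phi_t$ is holomorphic, $t\mapsto S_n\phi_t$ is a holomorphic $L^2(m)$-valued function, so $\overline{S_n\phi_t}$ depends holomorphically on $\overline t$, and differentiating $\var_n(\phi_t)=\frac1n\int_{\mathbb S^1}|S_n\phi_t|^2\,dm$ termwise gives
\[
\partial_t^j\overline\partial_t^k\,\var_n(\phi_t)=\frac1n\int_{\mathbb S^1}S_n\bigl(\partial_t^j\phi_t\bigr)\,\overline{S_n\bigl(\partial_t^k\phi_t\bigr)}\,dm=\var_n\bigl(\partial_t^j\phi_t,\ \partial_t^k\phi_t\bigr).
\]
The remaining task — passing to $n\to\infty$ simultaneously through all the $t$-derivatives — is the only real obstacle. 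By Cauchy's estimate applied to the holomorphic $C^\alpha(\mathbb S^1)$-valued function $t\mapsto\phi_t$, whose norm is bounded by $K(\rho)$ for $|t|\le\rho$ via the H\"older estimates for quasiconformal maps \cite[Theorem 3.10.2]{AIMb}, each $\partial_t^j\phi_t$ is bounded in $C^\alpha(\mathbb S^1)$ uniformly on compact subsets of $\mathbb{D}$. The exponential decay of correlations \eqref{eq:decay-corr} then yields $\var_n(\cdot,\cdot)\to\var(\cdot,\cdot)$ with a rate depending only on the $C^\alpha$-norms of the arguments, so $\var_n(\partial_t^j\phi_t,\partial_t^k\phi_t)\to\var(\partial_t^j\phi_t,\partial_t^k\phi_t)$ and $\var_n(\phi_t)\to\var(\phi_t)$ locally uniformly in $t$. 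Uniform convergence of the real-analytic functions $\var_n(\phi_t)$ together with locally uniform convergence of all their derivatives licenses differentiation under the limit, and combining this with the polarised coboundary identity gives $\partial_t^j\overline\partial_t^k u=\var(\partial_t^j\phi_t,\partial_t^k\phi_t)\big/\!\int\log|B'|\,dm=\sigma^2(\partial_t^j g_t,\partial_t^k g_t)$. This is exactly the mechanism already used to obtain subharmonicity and real-analyticity of $u$ in Theorem \ref{thm:u-subharmonic} and in \cite[Section 7]{PUZ2}; a minor secondary point, also settled by the holomorphy of $t\mapsto\psi_t/t$ as a $C^\alpha(\mathbb S^1)$-valued map, is that $\partial_t$ commutes with passing to the continuous boundary extension that defines the virtual coboundary.
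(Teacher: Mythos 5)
Your proposal is correct and follows essentially the same route as the paper: differentiate the finite-level covariances $\var_n(\psi_t/t)$ termwise, use the exponential decay of correlations \eqref{eq:decay-corr} together with locally uniform $C^\alpha$-bounds on the $t$-derivatives to get locally uniform convergence, and invoke the classical fact that uniform convergence of functions and of their derivatives identifies the derivative of the limit. The only difference is presentational: you make explicit the polarised form of Theorem \ref{virtual-cocycle-thm} and the fact that $\partial_t^j(\psi_t/t)$ remains a mean-zero virtual coboundary of $\partial_t^j(\log H_t'/t)$, steps the paper leaves implicit, and you handle all mixed derivatives at once rather than by the paper's one-derivative-at-a-time induction.
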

\begin{proof}
We prove the statement by induction, one derivative at a time. For the first derivative,
$$
\partial_t  \var_n(\psi_t/t) = \frac{1}{n} \int_{\mathbb{S}^1} S_n (\partial_t (\psi_t/t)) \overline{S_n (\psi_t/t)} \, dm.
$$
Since  $t \to \psi_t/t$ is a bounded holomorphic map from $B(0,\rho)$ to the Banach space $C^\alpha(\mathbb{S}^1)$, the derivative $\partial_t(\psi_t/t)$ is H\"older continuous, in which case, the decay of correlations gives the convergence
$$
\partial_t  \var_n(\psi_t/t) \to \var( \partial_t (\psi_t/t), \, \psi_t/t), \quad \text{as } n\to\infty.
$$
To justify that  $\partial_t \var(\psi_t/t) =  \var( \partial_t (\psi_t/t), \, \psi_t/t)$, it suffices to use the well-known fact that if a sequence
of $C^1$ functions $F_n$ converges uniformly (on compact sets) to $F$, and the derivatives $F'_n$ converge uniformly to $G$, then necessarily $F'=G$.
One may compute further derivatives in the same way.
\end{proof} 

Leveraging the subharmonicity of the functions $\Delta^n u$ (which follows from the previous theorem by taking $j = k = n$), the Poisson-Jensen formula for subharmonic functions \cite[Theorem 4.5.1]{ransford} gives
\begin{equation}
\limsup_{r \to 1} \frac{1}{2\pi} \int_{|z|=r} u(z) \,|dz| \ \ge \  u(0) \, + \, \sum_{n=1}^\infty c_n^{-1} \cdot \Delta^n u(0),
\end{equation}
where $c_n = \Delta^n \bigl (|z|^{2n} \bigr)$.
As noted earlier, $u(0) = \sigma^2(\mathcal S\mu)$ while
$$
\frac{\Delta u(0)}{4} = \sigma^2 \Bigl (\mathcal S \mu \mathcal S \mu - \frac{1}{2} (\mathcal S \mu)^2 \Bigr )
$$
as the Neumann series expansion \eqref{neumann} shows.
The Beltrami coefficient $\mu$ from Lemma  \ref{mu} (with the choice of degree $d=16$) gives the value 
$$
\liminf_{\tau \to 0} \frac{B(\tau)}{\tau^2/4} \, \ge \,  \sigma^2(\mathcal S\mu) + \sigma^2 \Bigl (\mathcal S \mu \mathcal S \mu - \frac{1}{2} (\mathcal S \mu)^2 \Bigr ) \, >\, 0.893.
$$
Using further terms, and playing around with the parameters $(d, \rho_0, n_0)$, we were able to (rigorously) obtain the lower bound $0.93$ with the help of Mathematica to automate the computations.

\bibliographystyle{amsplain}

\end{document}